\newcommand{\E}{\mathbb{E}}
\newcommand{\R}{\mathbb{R}}
\newcommand{\1}{\mathbf{1}}
\newcommand{\calG}{\mathcal{G}}
\newcommand{\calM}{\mathcal{M}}
\newcommand{\calN}{\mathcal{N}}
\newcommand{\be}{\begin{equation}}
\newcommand{\ee}{\end{equation}}
\newcommand{\inr}[2]{\langle #1, #2 \rangle}
\DeclareMathOperator{\sgn}{sgn}
\DeclareMathOperator{\tr}{tr}
\DeclareMathOperator{\argmin}{argmin}
\DeclareMathOperator{\Var}{Var}
\DeclareMathOperator{\Cov}{Cov}
\newtheorem{conjecture}{Conjecture}
\newtheorem{theorem}[conjecture]{Theorem}
\newtheorem{lemma}[conjecture]{Lemma}
\newtheorem{corollary}[conjecture]{Corollary}
\newtheorem{definition}[conjecture]{Definition}
\newtheorem{proposition}[conjecture]{Proposition}
\newtheorem{remark}[conjecture]{Remark}
\newcommand{\pp}{q}
\title{Moderate Deviations in Cycle Count}
\date{\today}
\author{
Joe Neeman
\and Charles Radin
\and Lorenzo Sadun
}
\address{Joe Neeman\\Department of Mathematics\\The University of
  Texas at Austin\\ Austin, TX 78712} \email{joeneeman@gmail.com}
\address{Charles Radin\\Department of Mathematics\\The University of
  Texas at Austin\\ Austin, TX 78712} \email{radin@math.utexas.edu}
\address{Lorenzo Sadun\\Department of Mathematics\\The University of
  Texas at Austin\\ Austin, TX 78712} \email{sadun@math.utexas.edu}
\thanks{
This work was partially supported by the Deutsche Forschungsgemeinschaft (DFG, German Research
Foundation) under Germany's Excellence Strategy – EXC-2047/1 – 390685813,
and by a fellowship from the Alfred P. Sloan Foundation.
}
\begin{document}

\begin{abstract}
  We prove moderate deviations bounds for the lower tail of the number of odd cycles in a
  $\calG(n, m)$ random graph. We show that the probability of decreasing triangle density by $t^3$,
  is $\exp(-\Theta(n^2 t^2))$ whenever $n^{-3/4} \ll t^3 \ll 1$, while for $k \ge 5$ 
  we give the same estimate for the probability of decreasing the $k$-cycle density by $t^k$, but for
  the larger range $n^{-1} \ll t^k \ll 1$.
  When $m \ge \frac 12 \binom n2$, we also find the leading coefficient in the exponent.
  This complements results of 
  Goldschmidt et al., who showed that for $n^{-3/2} \ll t^k \ll n^{-1}$, the probability is 
  $\exp(-\Theta(n^3 t^{2k}))$. That is, deviations of order
  smaller than $n^{-1}$ behave like small deviations, and deviations of order larger than $n^{-3/4}$ (for triangles)
  or $n^{-1}$ (for $k$-cycles with $k \ge 5$) behave like large deviations. For triangles, we conjecture that a sharp change
  between the two regimes occurs for deviations of size $n^{-3/4}$,
  which we associate with  
  a single large
  negative eigenvalue of the adjacency matrix becoming responsible for almost all of the cycle
  deficit. 

Our results can be interpreted as finite size effects in phase transitions in
    constrained random graphs.


\end{abstract}

\maketitle

\section{Introduction}

We prove moderate deviations bounds for the lower tail of the number of odd $k$-cycles in a
$\calG(n, m)$ random graph, i.e.\ a uniformly random graph among
all the graphs with $n$ vertices and $m$ edges.
We study deviations larger than those
of
Goldschmidt et al.~\cite{GoldschmidtGriffithsScott20} but smaller
than large deviations, which are of order the mean of the cycle density. For
instance, with the notation that $\tau_3(G)$ is the triangle density of a $\calG(n, m)$ graph $G$
where $n \to \infty$ and $m = p \binom{n}{2} + O(1)$, for some $1/2
\le p <1$ that is
fixed as $n \to \infty$ and $n^{-3/4} \ll t^3 \ll 1$, we prove (see Theorem~\ref{thm:large-p-cycle-count})
that
    \be
        \Pr\left(\tau_3(G) \le p^3 - t^3\right) =
        \exp\left(
            - \frac{\ln \frac{p}{1-p}}{2(2p - 1)} t^{2} n^2 + o(t^{2} n^2)
        \right).
    \ee

The number of triangles in a random graph is a fundamental and surprisingly important
random variable in the study of probabilistic combinatorics.
The probabilistic behavior of these triangle counts is at least partially
responsible for the development of many important
methods related to concentration inequalities for dependent random variables,
including Janson's inequality~\cite{Janson90}, the entropy
method~\cite{BoucheronLugosiMassart03}, martingale difference techniques in
random graphs, and others~\cite{Chatterjee12}.

The traditional point of view, as exemplified by the seminal paper by Janson
and Ruc\'inski~\cite{JansonRucinski02}, holds that the lower tail of the triangle count is easy to
characterize while the upper tail is hard.
This view stems at least partly
from the fact that most earlier works studied the $\calG(n, p)$ model for $p \to 0$,
and a substantial part of the difficulty in the study of the upper tail is to obtain
the correct dependence on $p$.
For dense graphs (i.e.\ when $p$ is fixed), the lower tail has more subtle behavior,
as was noted already by~\cite{Zhao17}.
In this regime the
$\calG(n, m)$ model, in which the number of edges is fixed at $m$, 
differs substantially from the $\calG(n,p)$ model.
For example, one can easily see that under
$\calG(n, p)$, the number of triangles, $T_3(G)$, satisfies
$\Var(T_3(G)) = \Theta(n^4)$, while under $\calG(n, m)$,
$\Var(T_3(G)) = \Theta(n^3)$. The distinction between the two models
-- especially in the lower tail --
becomes even more pronounced at larger deviations. This can be intuitively
explained by the fact that in $\calG(n, p)$ one can easily ``depress'' the
triangle count simply by reducing the number of edges: a graph $G$
with edge number $|E(G)|
\approx q \binom{n}{2}$ will typically have triangle density $\tau_3 \approx q^3$, and the
probability of seeing such a graph under $\calG(n, p)$ is of the order
$\exp(-\Theta(n^2 (p-q)^2))$; it follows that under $\calG(n, p)$ we have
\be
    \Pr(\tau_3(G) \le \E \tau_3(G) - t^3) \ge \exp(-\Omega(n^2 t^6)).
\ee

Under $\calG(n, m)$, large deficits in the triangle density are much rarer than they
are in $\calG(n,p)$. At the scale of constant-order deficits, this was noticed
in~\cite{RadinSadun13, RadinSadun15},
where it is proved that for $t = \Theta(1)$ and $\calG(n,m)$ with $m = \Theta(n^2)$,
\be
    \Pr(\tau_3(G) \le \E \tau_3(G) - t^3) = \exp(-\Theta(n^2 t^{2})).
\ee
(They also found the exact leading-order term in the exponent when $m = \frac 12 \binom{n}{2}+ o(n^2)$
and bounded the leading-order coefficient for all other values of $m$.)
The same argument also works for odd $k > 3$.
At the other end of the scale, a recent result of Goldschmidt et al.~\cite{GoldschmidtGriffithsScott20}
showed that for $n^{-3/2} \ll t^k \ll n^{-1}$ the lower tail has a different behavior:
\be
    \Pr(\tau_k(G) \le \E \tau_k(G) - t^k) = \exp(-\Theta(n^3 t^{2k})).
\ee
(Again, they also found the exact leading-order term in the exponent.)
Since $t^k \le \Theta(n^{-3/2})$ is within the range of the Central Limit Theorem this leaves
open the case of $n^{-1} \ll t^k \ll 1$. Noting that the two exponential rates (namely $n^2 t^{2}$ and $n^3 t^{2k}$)
cross over at $t^k =\Theta(n^{-\frac{k}{2(k-1)}})$, it is natural to guess that for all odd $k$,
\be
    \label{eq:two-regimes}
    \Pr(\tau_k(G) \le \E \tau_k(G) - t^k) = \begin{cases}
        \exp(-\Theta(n^3 t^{2k})) &\text{if $t^k \ll n^{-\frac{k}{2(k-1)}}$}, \\
        \exp(-\Theta(n^2 t^{2})) &\text{if $n^{-\frac{k}{2(k-1)}} \ll t^k \ll 1$}.
    \end{cases}
\ee
In the case $k=3$, we prove the second of these two cases; the first remains a conjecture.
For $k \ge 5$,~\eqref{eq:two-regimes} turns out to be false: the 
boundary between the two regimes turns out to occur when $t^k$ is of the order $n^{-1}$.
This is perhaps surprising because it implies that a deviation of order $n^{-1 - \epsilon}$
has probability $\exp(-\Theta(n^{1 + 2\epsilon}))$ but a deviation of order $n^{-1 + \epsilon}$ has
the much smaller probability $\exp(-n^{2 - 2/k - O(\epsilon)})$.

We also prove some structural results on
graphs with $\tau_k(G) \le \E \tau_k(G) - t^k$ in our range of $t^k$: conditioned on this cycle-count deviation,
with high probability such a graph has a very negative eigenvalue, and also has a small subgraph with substantially
smaller edge density. These structural results provide a plausible
explanation for the importance of the threshold between the two regimes: it is the threshold at which a single
large negative eigenvalue of the adjacency matrix becomes responsible for almost all of the $k$-cycle
deficit.

\section{Context and references}

We are concerned with random graphs $\calG(n, m)$, the uniform distribution on graphs on $n$ nodes with $m$ edges.
For a graph $G$ and an integer $k \ge 3$, define
$T_k(G)$ to be the number of injective maps $\phi: \{1, \dots, k\} \to V(G)$
for which $\{\phi(1), \phi(2)\}, \{\phi(2), \phi(3)\}, \dots, \{\phi(k), \phi(1)\}$ are all edges of $G$;
we say that $T_k(G)$ is the number of $k$-cycles in $G$.
The \emph{$k$-cycle density} is $\tau_k(G) = \frac{1}{\binom nk} T_k(G) \in [0, 1]$.
Results
on the probability of deviations of subgraph density from the mean
fall into three classes by size: small deviations, on the order of the
standard deviation, large deviations, on the order of the mean, and
moderate deviations, of intermediate size. 

Our main results concern the moderate regime of deviations of cycle
density in $\calG(n, m)$, in which we prove, among other things, that
deviations near but below the large class are qualitatively different
from deviations near but above the small class. We know of no other
results of this sort, for $\calG(n, m)$ or the $\calG(n, p)$ random graph model, in which edges appear independently.

For small deviations there is a long history under the name Central
Limit Theorem. There are also many papers on moderate and large
deviations of subgraph counts. As background, more specifically for
results discussed here, we suggest the following: 
\cite{ChatterjeeDembo20,CookDembo20,LubetzkyZhao17,Bhattacharya_etal17,Zhao17,
BhattacharyaDembo19,Gunby20,LubetzkyZhao15,JansonWarnke16} and references
within them for a broader view. As our results are strongly colored
by large deviations we note in particular \cite{Chatterjee16}.

For convenience we note some common asymptotics notation. We use $f
=o(g)$ or $f \ll g$ to mean $\lim |f(n)|/g(n)=0$, $f = O(g)$ to mean $\lim\sup f(n)/g(n) < \infty$,
$f=\Omega(g)$ to mean $\lim\inf
f(n)/g(n) >0$, $f=\omega(g)$ or $f \gg g$ to mean $\lim |f|/g = \infty$, and
$f=\Theta(g)$ to mean both $f = O(g)$ and $f = \Omega(g)$.
The phrase ``with high probability'' means ``with probability converging to 1 as $n \to \infty$,''
and we also make use of probabilistic asymptotic notation: ``$f = O(g)$ with high probability'' means
that for every $\epsilon > 0$ there exists $C > 0$ with $\limsup \Pr(f \ge C g) \le \epsilon$;
``$f = o(g)$ with high probability'' means that for every $\epsilon > 0$, $|f|/g \le \epsilon$ with high 
probability; and analogously for $\Omega$ and $\omega$.

We are studying the $k$-cycle density of $\calG(n,m)$ for $t \to 0$ (but not too quickly)
and for odd $k$ (for even $k$, it is not possible for $\tau_k(G)$ to be significantly smaller than $p^k$).
The case $0 \le t^k \le \Omega(n^{-3/2})$ is within the range of the
Central Limit Theorem and it is covered by Janson's more general work
on subgraph statistics~\cite{Janson94}. The range $n^{- 3/2} \ll t^k \ll
n^{-1}$ is studied by~\cite{GoldschmidtGriffithsScott20}; they showed
that in this regime
\be\label{eq:ggs-bound-large-deviations}
  \Pr(\tau_k(G) \le \E \tau_k(G) - t^k) = \exp\left(-\frac{t^{2k} n^3}{2\sigma_p^2}(1 + o(1)) \right),
\ee
where
$\sigma_p^2 = \Var(\tau_k(G)) / n^3$, which is of constant order.  They
also show an upper bound for larger $t$: for $n^{-1} \ll t^k \ll 1$,
\be \Pr(\tau_k(G) \le \E \tau_k(G) - t^k) = \exp\left( -\Omega(t^{k} n^2) \right).
\ee We show that this upper bound is mostly not tight. In particular, for $k$-cycles
with $k \ge 5$ we show that the correct exponent is $t^2 n^2$ for all $n^{-1} \ll t^k \ll 1$.
For triangles, we show the same exponent but only in the range $n^{-3/4} \ll t^3 \ll 1$;
we conjecture that this is the best possible range, and that
the bound~\eqref{eq:ggs-bound-large-deviations} is sharp for triangles in the range $n^{-1} \ll t^3 \ll n^{-3/4}$.
In the case $p \ge \frac
12$, we also derive more detailed results (see Theorem~\ref{thm:large-p-cycle-count}): we identify the leading
constant in the exponent and we prove some results on the graph structure conditioned
on having few cycles.

\subsection{Related work on random graphs}

Besides the work of~\cite{GoldschmidtGriffithsScott20}, there is
related work on large deviation principles (LDPs) for more general statistics, and LDPs for sparser
graphs, notably in \cite{HarelMoussetsamotij19,CookDembo20}.
In particular,~\cite{HarelMoussetsamotij19} is the only existing work we know of in which
the conditional structure of subgraph-density-constrained random graphs is established.
Specifically, they show that for sparse random graphs conditioned on having more than the
expected number of cliques, the random graph has either a ``clique'' structure in which
there is a collection of vertices has higher-than-expected edge density or a ``hub'' structure
in which there is a partition of the vertices with a higher-than-expected edge
density between the two parts. In contrast, our results show that for dense random graphs
with fewer cycles than expected, there is a collection of vertices with lower-than-expected
edge density.

\bigskip

Moderate deviations in triangle count (i.e. the case $k=3$) in $\calG(n,m)$ can be seen from
a different vantage based on~\cite{NeemanRadinSadun20}. That paper
follows a series of 
works~\cite{RadinSadun13,RadinSadun15,RRS2,KRRS1,KRRS2,KRRS3,Ko}
on the asymptotics of `constrained'
random graphs, in particular the asymptotics of $\calG(n,m,t)$, the
uniform distribution on graphs on $n$ nodes constrained to have $m$
edges and $t$ triangles. A large deviation principle, using
optimization over graphons, a variant of the seminal
work~\cite{ChatterjeeVaradhan11} by Chatterjee and Varadhan on large
deviations in $\calG(n,p)$, was used to prove various features of
phase transitions between asymptotic `phases', phases illustrated by
the entropy-optimal graphons. (See also~\cite{DemboLubetzky18}.) But
in~\cite{NeemanRadinSadun20} numerical evidence showed that the
transitions could be clearly seen in finite systems, using constrained
graphs with as few as 30 vertices. From this perspective moderate
deviations in triangle count can be understood as {\em finite size
  effects in a phase transition}. Asymptotically, entropy goes through
a sharp ridge as the edge density/triangle density pair
$(\varepsilon,\tau)$ passes through $(\varepsilon,\varepsilon^{3})$
(Thms.\,1.1,1.2 in~\cite{RadinSadun15}), and moderate deviations
quantify how the sharp ridge rounds off at finite node number,
somewhat as an ice cube freezing in water has rounded edges. The focus
thus shifts to the infinite system, where emergent phases are
meaningful, away from $\calG(n,m,t)$ or $\calG(n,m)$.

\subsection{Related work on random matrices}

Since we are studying the spectrum of the adjacency matrix, our methods mainly
come from random matrix theory. Specifically, we are interested in large deviations
of eigenvalues of the random adjacency matrices coming from our random graphs.
The study of large deviations of eigenvalues is an active topic, but the results
we aim for are somewhat atypical. Traditionally, ``large deviations'' refers to
deviations on the order of the mean, so large deviations results for random
matrices typically consider the event that the largest eigenvalue of
a symmetric $n \times n$ matrix with i.i.d. mean-zero, variance-$\sigma^2$ entries
is of order $\alpha \sqrt n$ for $\alpha > 2 \sigma$; this is because the typical
value of the largest eigenvalue is of order $2 \sigma \sqrt n$.
However, because an eigenvalue of order $n^\beta$ contributes $n^{k\beta}$ to the $k$-cycle count,
and because we are interested in cycle-count deviation of orders larger than $n^{k/2}$,
we are necessarily interested in much larger eigenvalues.

Another difference in our work is that we consider several large eigenvalues
simultaneously. This is because we need to consider the possibility
that the cycle count is affected by several atypically large eigenvalues instead
of just one.

In related works,
\begin{itemize}
    \item Guionnet and Husson~\cite{GuionnetHusson20} showed an LDP
        for the largest eigenvalue for a family of random matrices that
        includes Rademacher matrices,
        which is essentially the case that we consider when $p=\frac 12$.
    \item Augeri~\cite{Augeri16} showed an LDP for the largest eigenvalue
        for random matrices whose entries have heavier-than-Gaussian tails.
    \item Battacharya and Ganguly~\cite{BhattacharyaGanguly20} showed an LDP
        for the largest two eigenvalues of a sparse Erd\H{o}s-R\'enyi graph.
        The methods we use for our eigenvalue LDPs are related
        to their
        methods for the second-largest eigenvalue. In order to make the connection
        to cycle counts, however, we need to handle the entire spectrum.
    \item Augeri, Guionnet, and Husson~\cite{AugeriGuionnetHusson20} showed an LDP
        for the largest eigenvalue for most random matrices with subgaussian
        elements. These are essentially the same random matrices that we consider,
        with the main difference being that they are looking at eigenvalues
        of size $\Theta(\sqrt n)$.
\end{itemize}

\section{Cycle counts}

Our general setting is: we let $A$ be the adjacency matrix of a $\calG(n, m)$ graph,
where $n \to \infty$ and $m = p \binom{n}{2} + O(1)$, for some $p \in \R$ that is
fixed as $n \to \infty$. We denote by $\tau_k(A)$ the $k$-cycle density
of $A$, and we order the eigenvalues $\lambda_1(A) \ge \cdots \ge \lambda_n(A)$ in non-increasing order.

We prove two theorems governing asymptotic behavior as $n \to \infty$. We define the critical exponent
\be\label{eq:c*-def}
    c_* = \min\{1, \frac{k(2-k)}{2k-2}\}
\ee
and we assume that $n^{-c_*} \ll t^k \ll 1$; this is equivalent to $n^{-3/4} \ll t^3 \ll 1$ for $k=3$, and $n^{-1} \ll t^k \ll 1$
for $k \ge 5$.
Our first theorem is a strong result for $\frac 12 \le p < 1$.
\begin{theorem}\label{thm:large-p-cycle-count}
    If $\frac 12 \le p < 1$ and $n^{-c_*} \ll t^k \ll 1$ then
    \be
        \Pr\left(\tau_k(A) \le p^k - t^k\right) =
        \exp\left(
            - \frac{\ln \frac{1-p}{p}}{2(1-2p)} t^{2} n^2 + o(t^{2} n^2)
        \right),
    \ee
    with the convention that $\frac{\ln \frac{1-p}{p}}{1-2p} = 2$ when $p = \frac 12$.
    Moreover, conditioned on $\tau_k(A) \le p^k - t^k$, with high probability we have
    \be
        \lambda_n(A) = -t n (1 - o(1))
    \ee
    and $\lambda_{n-1}(A) \ge - o(t n)$.
\end{theorem}

The second result, for $0 < p \le \frac12$, is  weaker.

\begin{theorem}\label{thm:small-p-cycle-count}
    If $0 < p \le \frac 12$ and $n^{-c_*} \ll t^k \ll 1$ then $\Pr\left(\tau_k(G) \le p^k - t^k\right)$
    is bounded above by 
    \be
        \exp\left(
            - \frac{\ln \frac{p}{1-p}}{2(2p - 1)} t^{2} n^2 + o(t^{2} n^2)
        \right) 
    \ee
    and bounded below by 
    \be \exp\left(
            -\frac{1}{2p(1-p)} t^{2} n^2 + o(t^{2} n^2)
        \right).
        \ee
    Moreover, conditioned on $\tau_k(A) \le p^k - t^k$, with high probability we have
    \be
        \lambda_n(A) = -\Omega(t n).
    \ee
\end{theorem}
Together, these theorems show that $\Pr(\tau_k(A) \le p^k - t^k) = \exp (-\Theta(t^{2}n^2))$ for 
all $0 < p < 1$ and $n^{-c_*} \ll t^k \ll 1$.

In the case $p \ge \frac 12$, we also give a graph-theoretic characterization
of the conditioned graph: given that $\tau_k(A) \le p^k - t^k$, the graph contains
a lower-density subgraph of about $t n / (2p-1)$ vertices.
In what follows, for $V_1, V_2 \subset V$, let
\[
    E(V_1, V_2) = \sum_{v_1 \in V_1, v_2 \in V_2} 1_{\{\{v_1, v_2\} \in E(G)\}}
\]
count the edges between $V_1$ and $V_2$, while double-counting those edges with both endpoints in
$V_1 \cap V_2$.

\begin{theorem}\label{thm:conditional-structure}
    If $\frac 12 \le p < 1$ and $n^{-k/(2(k-1))} \ll t \ll 1$ then conditioned on $\tau_k(G) \le p^k - t^k$,
    with high probability there exists a subset $U \subset V(G)$ of size $|U| = (1 + o(1)) t n / (2p-1)$
    such that for every $V_1, V_2 \subset V(G)$,
    \[
        E(V_1, V_2) = p |V_1| |V_2| - (2p-1) |V_1 \cap U| |V_2 \cap U| + o(t n |V_1 \cup V_2|).
    \]
\end{theorem}

In particular, setting $V_1 = V_2 = U$ shows that the subgraph induced by $U$ has edge density about $1 - p$.
More generally, Theorem~\ref{thm:conditional-structure} implies that $G$ has no other non-trivial structure
at the scale of $t n$ or more vertices.

\subsection{Centering the matrix}
The main point of this section is that when considering the lower tail
for cycle counts in $\calG(n, m)$ graphs, it suffices to look at
eigenvalues of the centered adjacency matrix.
This might sound obvious, but there are two subtleties:
\begin{enumerate}
    \item It is important that we are looking at the lower tail, because the
        upper tail probabilities are controlled by perturbations to the largest
        eigenvector; this is exactly the eigenvector that gets destroyed when we
        center the adjacency matrix, so the eigenvalues of the centered adjacency
        matrix don't give much information about the upper tail probabilities.
    \item It is important that we are looking at $\calG(n, m)$ and not $\calG(n, p)$,
        because -- as discussed in the introduction -- in $\calG(n, p)$ the entropically favorable way to reduce the $k$-cycle count
        is to reduce the number of edges; again, this primarily affects the largest
        eigenvector and so is not related to the centered adjacency matrix.
\end{enumerate}

\begin{lemma}\label{lem:centered-cycle-count}
    Let $A$ be the adjacency matrix of a graph with $n$ vertices and $m = p \binom n2$ edges,
    and let $d_i$ be the degree of vertex $i$.
    Let $\tilde A = A - p \1 + pI$. For any $k \ge 3$, there exists $\epsilon > 0$ such that
    if $\|\tilde A\|_{\text{op}} \le \epsilon n$ then
    \be
        \tr[\tilde A^k] = \tr[A^k] - p^k n^k - (1 - O(\epsilon)) k n^{k-3} \sum_i (d_i - pn)^2 + O(n^{k-1})
    \ee
\end{lemma}

\begin{proof}
    Let $B = \tilde A + p \1$ and consider $\tr[B^k]$. (The extra contribution of $p I$ in $A$ makes a lower-order
    contribution and we will handle it later.)
    Consider the various terms in the expansion $(\tilde A + p \1)^k$ according to how many copies of $\tilde A$
    they contain: there is a $\tilde A^k$ term and a $p^k n^{k-1} \1$ term (which has trace $p^k n^k$),
    and every other term is a product involving at least one occurence of $\1$
    and at least one occurrence of $\tilde A$. Note that $\1 \tilde A \1 = 0$,
    and so all the terms that have exactly one occurrence of $\tilde A$ vanish;
    and of the terms containing exactly two occurrences of $\tilde A$, the only non-vanishing ones are
    of the form $\tilde A^2 \1_{k-2}$ (up to cyclic permutation). There are $k$ of these terms,
    and so after taking the trace, they contribute
    \begin{equation}\label{eq:contribution-of-squares}
        k \tr [\tilde A^2 \1^{k-2}] =  k n^{k-3} \tr[\tilde A^2 \1] = k n^{k-3} |\tilde A 1|^2 = k n^{k-3} \sum_i (d_i - pn)^2
    \end{equation}
    to $\tr[B^k]$.

    Next, consider the terms containing more than two occurrences of $\tilde A$. Since $\1 \tilde A \1 = 0$,
    the only non-vanishing contributions take the form
    \[
        \tr \prod_{i=1}^m \1^{j_i} \tilde A^{\ell_i}
    \]
    for some $\ell_i \ge 2$, and $\sum_i \ell_i \ge 3$. Since $\1^j = n^{j-1} \1$, the term displayed above can be re-written
    (setting $j = \sum j_i = k - \sum \ell_i$) as
    \[
        n^{j-m} \tr \prod_{i=1}^m \1 \tilde A^{\ell_i} = n^{j-m} \prod_{i=1}^m 1^T \tilde A^{\ell_i} 1.
    \]
    Since each $\ell_i \ge 2$, if $\|\tilde A\|_{\text{op}} \le \epsilon n$ then
    $|1^T \tilde A^{\ell_i} 1| \le \|\tilde A^{\ell_i - 2}\|_{\text{op}} |\tilde A 1|^2 \le \epsilon^{\ell_i - 2} n^{\ell_i - 2} |\tilde A 1|^2 \le \epsilon^{\ell_i} n^{\ell_i + 1}$.
    Now we consider two cases: if $\ell_i = 2$ for all $i$ then $m \ge 2$ (because $\sum_i \ell_i \ge 3$). In this case,
    we use the bound $|1^T \tilde A^{\ell_i} 1| \le \epsilon^{\ell_i} n^{\ell_i + 1} \le \epsilon^2 n^{\ell_i + 1}$ for $i \ge 2$
    and the bound
    $|1^T \tilde A^{\ell_i} 1| \le \epsilon^{\ell_i - 2} n^{\ell_i -2} |\tilde A 1|^2 \le n^{\ell_i - 2} |\tilde A 1|^2$ for $i = 1$, to obtain
    \begin{equation}\label{eq:contribution-of-2-squares}
        \left|\tr \prod_{i=1}^m \1^{j_i} \tilde A^{\ell_i}\right|
        \le \epsilon^2 n^{k - 3} |\tilde A 1|^2.
    \end{equation}
    (The $k-3$ exponent on $n$ comes from the fact that $j - m + \ell_1 - 2 + \sum_{i = 2}^m (\ell_i + 1) = j + \sum_i \ell_i - 3 = k - 3$.)
    On the other hand, if there is some $i$ with $\ell_i \ge 3$ then without loss of generality $i = 1$; we apply the bound
    $|1^T \tilde A^{\ell_i} 1| \le \epsilon^{\ell_i} n^{\ell_i + 1} \le n^{\ell_i + 1}$ for $i \ge 2$
    and the bound
    $|1^T \tilde A^{\ell_i} 1| \le \epsilon^{\ell_i - 2} n^{\ell_i -2} |\tilde A 1|^2 \le \epsilon n^{\ell_i - 2} |\tilde A 1|^2$ for $i = 1$, to obtain
    \begin{equation}\label{eq:contribution-of-cubes}
        \left|\tr \prod_{i=1}^m \1^{j_i} \tilde A^{\ell_i}\right|
        \le \epsilon n^{k - 3} |\tilde A 1|^2.
    \end{equation}
    Now compare~\eqref{eq:contribution-of-squares} to~\eqref{eq:contribution-of-2-squares} and~\eqref{eq:contribution-of-cubes}:
    out of all the terms in the expansion of $\tr[(\tilde A + p \1)^k]$ that contain between $1$ and $k-1$ copies of $\tilde A$,
    the terms containing two adjacent copies of $\tilde A$ (i.e.\ the terms we compute in~\eqref{eq:contribution-of-squares}
    dominate). Since the total number of terms in the expansion is $2^k$, we see that if $\epsilon$ is sufficiently small
    in terms of $k$ then
    \[
        \tr [B^k] = \tr [\tilde A^k] + p^k n^k + (1 - O(\epsilon)) k n^{k-3} \sum_i (d_i - pn)^2.
    \]

    Finally, to get the claim in terms of $A = B - p I$, note that $A^k = \sum_{j=0}^k \binom{k}{j} B^j (-p)^j$.
    We apply our previous result to each $B^j$ term, noting that for $j \ge 1$ each term contributes only $O(n^{k-1})$.
\end{proof}

Combining Lemma~\ref{lem:centered-cycle-count} with the observation that $\E \tr[A^k] = p^k n^k + O(n^{k-1})$ when
$A$ is the adjacency matrix of a $\calG(n, m)$ graph,
we arrive at the following consequence:

\begin{corollary}\label{cor:centered-cycle-count}
    Let $A$ be the adjacency matrix of a $\calG(n, m)$ graph and let $\tilde A = A - \E A$.
    For any $t \ge 0$ and all sufficiently small $\epsilon > 0$ depending on $k$,
    \be
        \Pr(\tr[A^k] \le \E \tr[A^k] - t^k)
        \le \Pr(\tr[\tilde A^k] \le -t^k + O(n^{k-1})) + \Pr(\|\tilde A\|_{\text{op}} \ge \epsilon n)
    \ee
\end{corollary}

\section{Large deviations for eigenvalues of random matrices}

In this section and beyond, we let $A$ denote a generic random matrix and we estimate
the most positive eigenvalues of $A$. Since we are looking at lower tails, the
most important such matrix to keep in mind is {\em minus} the centered adjacency matrix,
previously denoted $\tilde A$ or $A - \E A$. This is the same as {\em plus} the centered
adjacency matrix of a random graph with edge density $\pp=1-p$. The proof of Theorem 
\ref{thm:large-p-cycle-count} ($p \ge \frac12$) thus relies on results for $\pp \le \frac12$,
while the proof of Theorem \ref{thm:small-p-cycle-count} ($p \le \frac12$) relies on results for $\pp \ge \frac12$.

    \begin{definition}
        For a random variable $\xi$, its cumulant-generating function is
        \be
            \Lambda_\xi(s) = \ln \E \exp(s \xi)
        \ee
        whenever the expectation exists; when the expectation does not exist, we set $\Lambda_\xi(s) = +\infty$.
    \end{definition}

    \begin{definition}
        The random variable $\xi$ is \emph{subgaussian} if there exists a constant $C$
        such that $\Lambda_\xi(t) \le C t^2$ for every $t \in \R$.
    \end{definition}

    Note that according to our definition, a subgaussian random variable has mean zero
    (since if $\Lambda_\xi(t)$ is finite on a neighborhood of 0 then $\Lambda_\xi(0) = 0$ and
    $\Lambda_\xi'(0) = \E \xi$,
    and so if $\E \xi$ is non-zero then one cannot have $\Lambda_\xi(t) \le C t^2$ on a neighborhood of 0).
    Note also that if $\E \xi = 0$ and $\|\xi\|_\infty < \infty$ then $\xi$ is subgaussian.

    \begin{definition}
        For a function $f: \R \to \R$, its Legendre transform is the function $f^*: \R \to \R \cup \{+\infty\}$
        defined by
        \be
            f^*(y) = \sup_{x \in \R} \{xy - f(x)\}
        \ee
    \end{definition}

    Some basic properties of the Legendre transform include:
    \begin{itemize}
        \item If $f \le g$ then $f^* \ge g^*$.
        \item If $f$ is convex then $f^{**} = f$.
        \item If $f(x) =  c x^2$ then $f^*(x) = \frac{x^2}{4c}$.
    \end{itemize}

    Our goal in this note is to establish large deviations principles for
    extreme eigenvalues and singular values of random matrices.
    We will consider a symmetric $n \times n$ random matrix $A_n$ (or sometimes just $A$)
    having i.i.d.\
    upper-diagonal entries and zero diagonal entries. The letter $\xi$ will
    always denote a random variable that is distributed as an upper-diagonal
    element of $A$, and we will always assume that $\xi$ is subgaussian.
    We write $\lambda_i(A)$ for the eigenvalues of $A$ (in non-increasing order)
    and $\sigma_i(A)$ for the singular values of $A$ (in non-increasing order).

    For the definition of a large deviations principle (LDP), we refer
    to~\cite[Chapter~27]{Kallenberg01}.

    \begin{theorem}\label{thm:eigenvalue-ldp}
        Let $\xi$ be a subgaussian random variable.
        For any integer $k \ge 1$ and any sequence $m_n$ satisfying $\sqrt n \ll m_n \ll n$,
        the sequence
        \be
            \frac{1}{m_n} (\sigma_1(A_n), \dots, \sigma_k(A_n))
        \ee
        satisfies an LDP with speed $m_n^2$ and
        good rate function $I: \R_+^k \to [0, \infty)$ given by
        \be
            I(x) = \frac{|x|^2}{2} \inf_{s \in \R} \frac{\Lambda^*_\xi(s)}{s^2}.
        \ee
        If we assume in addition that the function
        $s \mapsto \frac{\Lambda^*_\xi(s)}{s^2}$ achieves its infimum at some $s \ge 0$,
        then the sequence
        \be
            \frac{1}{m_n} (\lambda_1(A_n), \dots, \lambda_k(A_n))
        \ee
        satisfies an LDP with speed $m_n^2$ and the same good rate function $I$ as above.
    \end{theorem}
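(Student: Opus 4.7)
The plan is to establish matching upper and lower bounds of $\exp(-m_n^2 I(x)(1+o(1)))$ on $\Pr(\sigma_1(A_n) \ge x m_n)$ and, under the extra sign hypothesis, on $\Pr(\lambda_1(A_n) \ge x m_n)$, then upgrade to the joint LDP for $k$-tuples. The guiding heuristic is that at the intermediate scale $\sqrt n \ll m_n \ll n$, the cheapest way to force $\sigma_1(A_n) \approx x m_n$ is a \emph{rank-one plant}: inflate the entries of a small block and let the rest of $A_n$ be typical.

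For the lower bound I would plant the block explicitly. Given $s>0$ in the domain of $\Lambda^*:=\Lambda_\xi^*$, pick $S\subset[n]$ with $k=\lceil xm_n/s\rceil$ (feasible since $xm_n\ll n$), and work on the event that $A_{ij}\ge s$ for all $i<j$ in $S$. By independence this has probability $\exp(-\binom{k}{2}\Lambda^*(s)(1+o(1)))$, and the unit vector $u=k^{-1/2}\1_S$ satisfies $u^\top A_n u\ge ks(1+o(1))=xm_n(1+o(1))$, producing a correspondingly large and \emph{positive} Rayleigh quotient. Optimizing over $s$ yields rate $\tfrac{x^2 m_n^2}{2}\inf_s \tfrac{\Lambda^*(s)}{s^2}(1+o(1))$. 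The sign of the plant is exactly what distinguishes the singular-value and eigenvalue statements: one needs the infimum to be attained on $s\ge 0$ to ensure the plant contributes positively to $\lambda_1$, which is precisely the extra hypothesis.

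For the upper bound I would use $\sigma_1(A_n)=\sup_{\|u\|=\|v\|=1} u^\top A_n v$. For fixed $(u,v)$, $u^\top A_n v=\sum_{i<j}(u_iv_j+u_jv_i)A_{ij}$ is a sum of independent subgaussians, so a Chernoff bound gives
\be
\Pr\left(u^\top A_n v\ge xm_n\right)\le \exp\left(-\sup_{\theta>0}\left[\theta xm_n-\sum_{i<j}\Lambda_\xi\bigl(\theta(u_iv_j+u_jv_i)\bigr)\right]\right).
\ee
The supremum over $(u,v)$ is then handled by an $\varepsilon$-net stratified by sparsity: vectors supported on roughly $k$ coordinates form a net of log-cardinality $O(k\log(n/k))$, which is $o(m_n^2)$ at the optimal sparsity $k=\Theta(xm_n/s)$ because $m_n\gg\sqrt n$. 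Minimizing the Chernoff exponent over this geometry, and checking that the extremizers indeed concentrate on $\Theta(xm_n/s)$ coordinates with entries of order $1/\sqrt k$, recovers the same $\inf_s \Lambda^*(s)/s^2$ and matches the lower bound.

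The passage from $\sigma_1$ to $(\sigma_1,\dots,\sigma_k)$ uses $k$ disjoint plants on $S_1,\dots,S_k$, which are independent by construction, so the rate is additive and, by the homogeneity of $I$, equals $\sum_j I(x_j)$; the matching upper bound iterates by peeling off the leading singular pair and applying the single-coordinate argument to the deflated matrix. The main technical obstacle I expect is obtaining the \emph{sharp} constant $\inf_s \Lambda^*(s)/s^2$ in the upper bound: a crude net argument leaves only a qualitative subgaussian rate $x^2/(4C)$ and loses the true Legendre transform. To recover the exact infimum one must stratify the net by the sparsity profile of $(u,v)$, apply the Chernoff inequality with a $\theta$ calibrated to that sparsity, and verify that the entropic cost $\exp(O(k\log(n/k)))$ is always negligible compared to the probabilistic cost $\exp(\Theta(k^2)\Lambda^*(xm_n/k))$ throughout the window $\sqrt n\ll m_n\ll n$.
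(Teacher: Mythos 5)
Your plan correctly identifies the two main ingredients the paper uses: a planting (block) construction for the lower bound and a net-plus-Chernoff argument for the upper bound. The lower bound is essentially the paper's construction (block-diagonal matrix with constant entry $s_*$, size $\approx x m_n / s_*$, analyzed via Cram\'er). But two parts of your upper-bound and joint-LDP plans either create unnecessary difficulty or contain a genuine gap.

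First, the worry that a ``crude net argument leaves only a qualitative subgaussian rate $x^2/(4C)$'' and that one must therefore stratify the net by sparsity is unfounded. The paper's Hoeffding-type bound (Proposition~\ref{prop:hoeffding-conclusion}) is \emph{uniform} over all $M$ with $\|M\|_F \le 1$: for any such $M$ with upper-diagonal entries $a_i$ one has $\sum_i \Lambda(s a_i) \le \ell^* s^2 \sum_i a_i^2 \le \tfrac12 \ell^* s^2$ where $\ell^* = \sup_r \Lambda(r)/r^2$, and by Lemma~\ref{lem:order-of-optimization} this $\ell^*$ satisfies $4\ell^* = \bigl(\inf_s \Lambda^*(s)/s^2\bigr)^{-1}$, so the Chernoff bound already produces the exact constant $\tfrac12 \inf_s \Lambda^*(s)/s^2$ with no knowledge of the support of $M$. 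The only role of the net is to discretize $\calM_k$, and the coarse $(C/\epsilon)^{2nk}$ cardinality bound is already negligible because $m_n^2 \gg n$; no sparsity stratification is needed or used.

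Second, your passage from $\sigma_1$ to the $k$-tuple by ``peeling off the leading singular pair and applying the single-coordinate argument to the deflated matrix'' is a genuinely different route and the one step I would flag as likely to fail. After conditioning on $\sigma_1(A_n)$ being abnormally large, the deflated matrix $A_n - \sigma_1 u_1 u_1^\top$ is not a symmetric matrix with independent entries, and the top singular pair depends on all of $A_n$, so you cannot simply reapply Cram\'er or the Chernoff/net bound to it. The paper sidesteps this entirely: Lemma~\ref{lem:eigenvalues-inner-product} writes $\bigl(\sum_{i\le k}\sigma_i^2\bigr)^{1/2} = \sup_{M\in\calM_k}\inr{A}{M}$, so the union bound over a net of rank-$k$ matrices directly controls the joint statistic $\sum_{i\le k}\sigma_i^2$, and the $k$-dimensional LDP is then read off from matching bounds on that single scalar functional together with the block-diagonal lower-bound construction (which already plants $k$ disjoint blocks simultaneously). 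The rate function $I(x)=\tfrac{|x|^2}{2}\inf_s \Lambda^*(s)/s^2$ depends only on $|x|$, which is exactly why controlling $\sum\sigma_i^2$ suffices and why no iterative deflation is required.
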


    If $A_n$ is the centered adjacency matrix of $\calG(n, \pp)$ then it is covered by
    Theorem~\ref{thm:eigenvalue-ldp}, where $\xi$ is the random variable taking the values $-\pp$ and $1-\pp$
    with probabilities $1-\pp$ and $\pp$ respectively.
        In this case, we have
    \be
        \Lambda_\xi^*(s) = D(\pp + s, \pp) := (\pp + s) \ln \frac{\pp+s}{\pp} + (1 - \pp - s) \ln \frac{1-\pp-s}{1-\pp},
    \ee
    with the understanding that $\Lambda_\xi^*(s) = +\infty$ whenever $\pp + s \not \in (0, 1)$.
    It is not hard to check -- and we will do it in Section~\ref{sec:Gmn} --
    that $\frac{\Lambda_\xi^*(s)}{s^2}$ achieves its infimum at some $s \ge 0$
    if and only if $\pp \le \frac 12$.

    In the case that $\frac{\Lambda_\xi^*(s)}{s^2}$ saturates its infimum only at negative $s$
    (corresponding to $\pp > \frac 12$ in the Bernoulli example),
    we are not able to show an LDP for the eigenvalues. Note, however, that
    $\sum_i \sigma_i^2(A) \ge \sum_i \lambda_i^2(A)$ and so our LDP for singular values
    provides an upper bound: it implies, for example, that
    \be \label{eq:all-s}
        \frac{1}{m_n^2} \ln \Pr\left(\sqrt{\sum_i \lambda_i^2(A_n)} > m_n t\right) \le - \frac{t^2}{2} \inf_{s \in \R} \frac{\Lambda_\xi^*(s)}{s^2} + o(1)
    \ee
    On the other hand, we can also easily show the lower bound
    \be \label{eq:positive-s}
        \frac{1}{m_n^2} \ln \Pr\left(\sqrt{\sum_i \lambda_i^2(A_n)} > m_n t\right) \ge - \frac{t^2}{2} \inf_{s \ge 0} \frac{\Lambda_\xi^*(s)}{s^2} - o(1),
    \ee
    but the assumption that $\frac{\Lambda_\xi^*(s)}{s^2}$ saturates its infimum only at negative $s$
    implies that these bounds are non-matching.

    There are natural examples (including the Bernoulli example mentioned above)
    where $s^{-2} \Lambda_\xi^*(s)$ is increasing for $s \ge 0$.
    In this case,
    \be
        \inf_{s \ge 0} s^{-2} \Lambda_\xi^*(s) = \lim_{s \to 0} s^{-2} \Lambda_\xi^*(s) = \frac 12 (\Lambda_\xi^*)''(0) = \frac{1}{2 \E \xi^2},
    \ee
    and so our lower bound (for simplicity, focusing only on the case $k=1$) becomes
    \be
        \frac{1}{m_n^2} \ln \Pr\left(\lambda_1(A_n) > m_n t\right) \ge - \frac{t^2}{4 \E \xi^2} - o(1).
    \ee
    When $\xi$ has a Gaussian
    distribution, this turns out to be sharp, but we show that it is not sharp in general.

    \begin{theorem}\label{thm:lower-bound-non-sharp}
        In the setting of Theorem~\ref{thm:eigenvalue-ldp}, if $\E \xi^3 < 0$ and $\lim_{s \to \infty} s^{-2} \Lambda_\xi(s) = 0$ then
        there exists some $\eta > 0$ such that for any $t > 0$,
        \be
            \lim_{n \to \infty} \frac{1}{m_n^2} \ln \Pr\left(\lambda_1(A_n) > m_n t\right)
            > -(1-\eta)\frac{t^2}{4 \E \xi^2}.
        \ee
    \end{theorem}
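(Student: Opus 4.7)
The plan is to prove the lower bound by an exponential change of measure. I will construct a deterministic matrix $B$ with $\lambda_1(B) \ge m_n t$, define a tilted law $Q_n$ on $A_n$ with $\E_{Q_n} A_n = B$, and exploit $\kappa_3 := \E\xi^3 < 0$ to make $D(Q_n \| P_n)$ strictly below the Gaussian rate $\tfrac{m_n^2 t^2}{4\E\xi^2}$. Writing $\sigma^2 = \E\xi^2$, the heart of the argument is to exhibit constants $\alpha,\beta>0$ with
\[
g(\alpha,\beta) \;:=\; \frac{\Lambda^*(\alpha) + \Lambda^*(-\beta)}{(\alpha+\beta)^2} \;<\; \frac{1}{4\sigma^2},
\]
since this function will govern the KL cost per unit $m_n^2 t^2$ of the block construction below.

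For fixed small $\alpha,\beta>0$, let $k = \lceil (1+\epsilon) m_n t / (\alpha+\beta)\rceil$ for a small $\epsilon>0$. Since $m_n \ll n$, we can partition $[n]$ into $L, R, C$ with $|L|=|R|=k$. Define $B_{ij} = \alpha$ when $i\ne j$ both lie in $L$ or both in $R$, $B_{ij} = -\beta$ when $i$ and $j$ lie on opposite sides of $L\cup R$, and $B_{ij} = 0$ otherwise. The restriction of $B$ to the span of $\mathbf{1}_L, \mathbf{1}_R$ has eigenvalues $\alpha(k-1)\pm\beta k$, so $\lambda_1(B) = \alpha(k-1) + \beta k \ge (1+\epsilon) m_n t - O(1)$. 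Taking $Q_n$ to be the product of exponential tilts of the off-diagonal entries with $\E_{Q_n} A_{ij} = B_{ij}$, we get $D(Q_n \| P_n) = k(k-1)\Lambda^*(\alpha) + k^2 \Lambda^*(-\beta) = m_n^2 t^2\, g(\alpha,\beta)(1+o(1))$.

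To produce $(\alpha,\beta)$ with $g<1/(4\sigma^2)$, I will Taylor-expand around the Gaussian symmetric critical point $\alpha=\beta=x$. Using $\Lambda^*(y) = \tfrac{y^2}{2\sigma^2} - \tfrac{\kappa_3 y^3}{6\sigma^6} + O(y^4)$, the first-order condition $(\Lambda^*)'(\alpha) + (\Lambda^*)'(-\beta) = 0$ forces $\beta - \alpha = -\kappa_3(\alpha^2+\beta^2)/(2\sigma^4) > 0$, and a direct computation of $g(x-\epsilon/2, x+\epsilon/2)$ in powers of $\epsilon = \beta - \alpha$ gives
\[
g(x-\epsilon/2, x+\epsilon/2) \;=\; \frac{1}{4\sigma^2} \;+\; \frac{\kappa_3 \epsilon}{8\sigma^6} \;+\; \frac{\epsilon^2}{16 x^2 \sigma^2} \;+\; O(\epsilon^3 + x).
\]
Minimizing in $\epsilon$ yields $\epsilon^* = -\kappa_3 x^2/\sigma^4$ and
\[
g_{\min}(x) \;=\; \frac{1}{4\sigma^2} \;-\; \frac{\kappa_3^2 x^2}{16\sigma^{10}} \;+\; O(x^3),
\]
which is strictly less than $\tfrac{1}{4\sigma^2}$ for every sufficiently small $x>0$. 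Fixing such $(\alpha_0,\beta_0)$ gives some $\eta > 0$ with $D(Q_n \| P_n) \le (1-\eta) \tfrac{m_n^2 t^2}{4\sigma^2}(1+o(1))$.

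The proof is then closed by standard arguments. Under $Q_n$, the centered matrix $A_n - B$ has independent, mean-zero, uniformly subgaussian entries (the tilt parameters are bounded since $\alpha_0, \beta_0$ are fixed), so a standard sub-Gaussian Wigner bound (e.g., Bandeira--van Handel) gives $\|A_n - B\|_{\mathrm{op}} = O(\sqrt n)$ with $Q_n$-probability $1-o(1)$; because $\sqrt n = o(m_n)$, $\lambda_1(A_n) \ge \lambda_1(B) - O(\sqrt n) > m_n t$ with $Q_n$-probability $1-o(1)$. The usual change-of-measure inequality then yields $\ln \Pr_{P_n}(\lambda_1(A_n) > m_n t) \ge -D(Q_n \| P_n) - o(m_n^2)$, and replacing $\eta$ by $\eta/2$ delivers the claimed strict inequality. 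The main technical obstacle is the Taylor bookkeeping underlying $g_{\min}(x) < 1/(4\sigma^2)$: the first-order asymmetry contribution to $g$ at the Gaussian saddle cancels, so the strict decrease only appears once the $\kappa_3$-generated linear-in-$\epsilon$ term is balanced against the positive $\epsilon^2/(16x^2\sigma^2)$ coming from $(\Lambda^*)''$, and one must verify the signs, the cancellations, and that the chosen $x$ is small enough for the $O(x^3)$ remainder not to destroy the $\kappa_3^2 x^2$ gain. A secondary concern is the operator-norm estimate under the block-inhomogeneous measure $Q_n$, but it is routine once one checks the tilted entries remain subgaussian with a uniform constant.
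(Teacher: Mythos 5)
Your overall scheme (tilt the off-diagonal laws to a deterministic $B$ with $\lambda_1(B)>m_nt$, bound $D(Q_n\|P_n)$ via $\Lambda^*$, and finish by Weyl plus sub-Gaussian operator-norm concentration) is sound, and your identification of the entropy functional $g(\alpha,\beta)=\frac{\Lambda^*(\alpha)+\Lambda^*(-\beta)}{(\alpha+\beta)^2}$ for the two-block family is correct. The gap is in the Taylor bookkeeping you yourself flagged as the main obstacle: the remainder in your expansion of $g(x-\epsilon/2,x+\epsilon/2)$ is $O(x^2)$, not $O(\epsilon^3+x)$ or $O(x^3)$, because you dropped the quartic term of $\Lambda^*$. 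Writing $\Lambda^*(y)=\frac{y^2}{2\sigma^2}-\frac{\kappa_3 y^3}{6\sigma^6}+\frac{c_4 y^4}{24}+O(y^5)$ with $c_4=(\Lambda^*)^{(4)}(0)=\frac{3\kappa_3^2-\kappa_4\sigma^2}{\sigma^{10}}$, the symmetric baseline already carries a second-order term, $g(x,x)=\frac{1}{4\sigma^2}+\frac{c_4 x^2}{48}+O(x^4)$, so after minimizing over the asymmetry $\epsilon$ one gets
\begin{equation}
g_{\min}(x)=\frac{1}{4\sigma^2}+\Bigl(\frac{c_4}{48}-\frac{\kappa_3^2}{16\sigma^{10}}\Bigr)x^2+O(x^3)
=\frac{1}{4\sigma^2}-\frac{\kappa_4}{48\sigma^8}\,x^2+O(x^3).
\end{equation}
The $\kappa_3^2$ gain exactly cancels against the $3\kappa_3^2$ piece of $c_4$, and what remains is governed by the fourth cumulant $\kappa_4$. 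The hypothesis $\E\xi^3<0$ gives no information about the sign of $\kappa_4$, so for a distribution with $\kappa_3<0$ and $\kappa_4\le0$ your two-block family never dips below $\frac{1}{4\sigma^2}$ at this order and the argument does not close.

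The structural reason is that your $B$ uses only two entry-scales, both of order $x$, so the third-order contributions in $\alpha$ and $\beta$ almost cancel by symmetry and you are forced into a second-order gain that collides with the second-order remainder. The paper sidesteps this by taking the tilting matrix to be a rank-one $vv^T$ with a very \emph{imbalanced} vector $v$: many entries of size $\sqrt{\epsilon}$, and a much smaller number of entries of size $-\delta/\sqrt{\epsilon}$ with $\epsilon\ll\delta\ll1$. This produces three entry values at three different scales ($\epsilon$, $-\delta$, and $\delta^2/\epsilon$) and breaks the parity that kills your first-order term: the resulting rate is $\tfrac{1}{2L}-\frac{\epsilon\,(\Lambda^*)'''(0)L}{6}+O(\epsilon^2+\epsilon\delta)$, a genuine \emph{linear-in-$\epsilon$} improvement whose sign is controlled by $\kappa_3$ alone, with the remainder $O(\epsilon^2+\epsilon\delta)$ strictly smaller after choosing $\delta$ small and then $\epsilon\ll\delta$. (The large entry $\delta^2/\epsilon$ is where the hypothesis $\lim_{s\to\infty}s^{-2}\Lambda(s)=0$ enters, via Lemma~\ref{lem:large-term}.) If you want to salvage a tilting proof, replace your symmetric two-block $B$ with a rank-one $vv^T$ whose positive and negative parts have the same asymmetric $\epsilon,\delta$-scaling as the paper's $v$; then your change-of-measure and Weyl argument should go through verbatim and reproduce the first-order gain.
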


    In particular, the assumptions of Theorem~\ref{thm:lower-bound-non-sharp} are satisfied for
    the (centered) Bernoulli random variable with $\pp > \frac 12$ mentioned above.

    For our applications to random graphs, we require a version of
    Theorem~\ref{thm:eigenvalue-ldp} for random bits chosen without
    replacement. Specifically, we consider the Erd\H{o}s-R\'enyi random graphs
    $\calG(n, m)$, where $m$ is an integer satisfying
    $|m - \pp \binom{n}{2}| = O(1)$ (and $\pp \in (0, 1)$ is fixed).
 
    \begin{theorem}\label{thm:eigenvalue-lpd-Gnm}
        Fix $\pp \in (0, 1)$
        and let $A_n$ be the centered adjacency matrix of a $\calG(n,m)$ random graph
        with $|m - \pp \binom{n}{2}| = O(1)$.
        For any integer $k \ge 1$ and any sequence $m_n$ satisfying $\sqrt n \ll m_n \ll n$,
        the sequence
        \be
            \frac{1}{m_n} (\sigma_1(A_n), \dots, \sigma_k(A_n))
        \ee
        satisfies an LDP with speed $m_n^2$ and good rate function $I: \R_+^k \to [0, \infty)$ given by
        $I(x) = \frac{|x|^2}{2} \cdot \frac{\ln \frac{1-\pp}{\pp}}{1-2\pp}$ (or $I(x) = |x|^2$ when $\pp = \frac 12$).

        If, in addition, $\pp \le \frac 12$ then the sequence
        \be
            \frac{1}{m_n} (\lambda_1(A_n), \dots, \lambda_k(A_n))
        \ee
        also satisfies an LDP with the same speed and rate function.
    \end{theorem}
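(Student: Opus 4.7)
The plan has two steps: (i) specialize Theorem~\ref{thm:eigenvalue-ldp} to the centered-Bernoulli entry to pin down the explicit rate constant, and (ii) transfer the resulting LDP from $\calG(n,\pp)$ to $\calG(n,m)$ by conditioning on the number of edges $M$. For step (i), I would compute $\inf_{s\in\R}\Lambda_\xi^*(s)/s^2$ where $\xi$ is the centered Bernoulli random variable taking values $-\pp, 1-\pp$ and $\Lambda_\xi^*(s)=D(\pp+s,\pp)$. Setting the derivative of $\Lambda_\xi^*(s)/s^2$ to zero yields the critical-point equation $s(\Lambda_\xi^*)'(s)=2\Lambda_\xi^*(s)$, whose unique interior root is $s^*=1-2\pp$; direct substitution gives $\Lambda_\xi^*(s^*)/(s^*)^2=\frac{\ln((1-\pp)/\pp)}{1-2\pp}$, with the L'Hopital limit $2$ when $\pp=\tfrac12$. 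The boundary behaviour $\Lambda_\xi^*(s)/s^2\to\infty$ as $s\to -\pp^+$ or $s\to (1-\pp)^-$ certifies this as the global minimum. Since $s^*\ge 0$ iff $\pp\le\tfrac12$, this matches exactly the regime where Theorem~\ref{thm:eigenvalue-ldp} also gives an eigenvalue LDP, so both the singular-value LDP and (when $\pp\le\tfrac12$) the eigenvalue LDP for the centered $\calG(n,\pp)$ adjacency matrix follow immediately with the claimed rate.

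For step (ii), I would use the identity $\calG(n,m)=\calG(n,\pp)(\,\cdot\mid M=m)$. The two natural centerings (by $\pp$ versus by $m/\binom{n}{2}$) differ by $O(1/n)$ in operator norm, which is $o(m_n)$ since $m_n\gg\sqrt n$, and therefore invisible at LDP scale. The local CLT for the binomial gives $\Pr(M=m)=\Theta(1/n)$, and the upper bound is then immediate: for any event $E$,
\[
\Pr_{\calG(n,m)}(E)\;=\;\frac{\Pr_{\calG(n,\pp)}(E\cap\{M=m\})}{\Pr(M=m)}\;\le\;O(n)\cdot\Pr_{\calG(n,\pp)}(E),
\]
and the factor $n$ is absorbed into the $o(m_n^2)$ remainder since $\ln n=o(m_n^2)$.

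The matching lower bound is where I expect the main obstacle. I would revisit the change-of-measure underlying the lower bound of Theorem~\ref{thm:eigenvalue-ldp}: to plant $\sigma_1\approx m_n x$ one tilts to a product-Bernoulli measure whose entry means form a rank-one perturbation of order $(s^* m_n/n)v_i v_j$ with $\|v\|=\sqrt n$, costing exactly $m_n^2 I(x)+o(m_n^2)$ in KL divergence from $\calG(n,\pp)$. By choosing the spike direction $v$ orthogonal to the all-ones vector---which is free at the level of the rate function, since the optimizing direction is degenerate under orthogonal rotation---one arranges $\E_{\mathrm{tilt}} M=\pp\binom{n}{2}+O(m_n)=\pp\binom{n}{2}+o(n)$ while $\Var_{\mathrm{tilt}}(M)=\Theta(n^2)$. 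A local CLT for the resulting inhomogeneous Bernoulli sum then yields $\Pr_{\mathrm{tilt}}(M=m)=\Omega(1/n)$, and combining with the standard tilted lower bound gives
\[
\Pr_{\calG(n,\pp)}\bigl(\sigma_1\ge m_n x,\,M=m\bigr)\;\ge\;\exp\bigl(-m_n^2 I(x)-o(m_n^2)\bigr)\cdot\Omega(1/n);
\]
dividing by $\Pr(M=m)=\Theta(1/n)$ delivers the desired $\calG(n,m)$ lower bound. The joint LDP for the top $k$ quantities is handled identically, using a rank-$k$ perturbation whose spike directions are all chosen orthogonal to $\mathbf{1}$; and when $\pp\le\tfrac12$ the nonnegative $s^*$ makes the planted spike a positive eigenvalue, so the eigenvalue LDP transfers by the same argument. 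The technical heart is the uniform LCLT under the tilt, but since the tilted Bernoulli parameters stay within $O(m_n/n)=o(1)$ of $\pp$ this is routine.
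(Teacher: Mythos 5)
The rate-constant computation (step (i)) and the upper-bound transfer (step (ii), the conditioning inequality $\Pr_{\calG(n,m)}(E) \le O(n)\Pr_{\calG(n,\pp)}(E)$, absorbing $O(\ln n) = o(m_n^2)$) both agree with the paper. The lower bound is where your proposal breaks down, and the gap is not cosmetic.

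Your proposed change of measure tilts every entry by $(s^* m_n/n) v_i v_j$ with $\|v\|=\sqrt n$, i.e.\ a tilt whose individual increments are $O(m_n/n) = o(1)$, spread over $\Theta(n^2)$ coordinates. Such a tilt only sees the Gaussian approximation $\Lambda^*(\theta) \approx \theta^2/(2\E\xi^2)$, so its total KL cost is $\approx (s^*)^2 m_n^2 / (4\E\xi^2)$. But the rate you need to achieve is $m_n^2 I(x) = m_n^2 x^2 L/2$ with $L = \inf_s \Lambda^*(s)/s^2$, and for the Bernoulli $\pp\ne \tfrac12$ case, $L = \Lambda^*(s^*)/(s^*)^2 < 1/(2\E\xi^2)$ \emph{strictly}. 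Concretely, at $\pp=\tfrac14$, $L = 2\ln 3 \approx 2.20$ while $1/(2\E\xi^2) = 8/3 \approx 2.67$. So the spread-out tilt is strictly too expensive and delivers a weaker lower bound than the theorem asserts. The optimal tilt --- the one actually used in Proposition~\ref{prop:lower-bound} and Corollary~\ref{cor:smallest-eigenvector} --- is \emph{localized}: it places the tilt parameter $s_*$ (a $\Theta(1)$ quantity) on an $\ell\times\ell$ block with $\ell = \Theta(m_n)$, and leaves the remaining $\Theta(n^2)$ entries untouched. This localization is precisely what makes the constant $L$ rather than $1/(2\E\xi^2)$ appear, and it is incompatible with your claim that ``the optimizing direction is degenerate under orthogonal rotation'': rotating $v$ to be orthogonal to $\1$ forces the block entries of $M$ to carry mixed signs $\pm s_*$, which changes the Cram\'er calculation (the relevant cumulant becomes $\tfrac12[\Lambda(u)+\Lambda(-u)]$, symmetric in a way $\Lambda$ is not when $\E\xi^3\ne 0$) and again yields a strictly worse exponent for $\pp\ne\tfrac12$. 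The rate \emph{function} is rotation-invariant in its argument; the \emph{construction} achieving it is not, and this distinction is exactly where the LDP for Bernoulli-type matrices departs from the Gaussian case.

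A second consequence: the localized tilt shifts each of the $\Theta(m_n^2)$ block entries by $\Lambda'(s_*) = \Theta(1)$, so the expected edge count under the correct tilt moves by $\Theta(m_n^2) = \omega(n)$, i.e.\ by $\omega(1)$ standard deviations --- directly contradicting the $\E_{\mathrm{tilt}} M = \pp\binom n2 + O(m_n)$ that your LCLT step requires. The paper avoids this entirely by a different route: because the non-zero entries of the extremal $M$ all have the same value, $\inr{A_n}{M}$ under $\calG(n,m)$ is a translated hypergeometric random variable, and Stirling's formula gives a direct bound $|\ln\Pr(H = s) - \ln\Pr(B = s)| = O(r^2/N)$ for population size $N = \binom n2$ and $r = O(m_n^2)$ trials; since $m_n \ll n$ this error is $o(m_n^2)$ and the $\calG(n,\pp)$ lower bound transfers verbatim. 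This is simpler and avoids having to engineer an edge-count-preserving tilt. If you wish to salvage the tilting approach, one can add a secondary uniform tilt of size $O(m_n^2/n^2)$ on the $\Theta(n^2)$ off-block entries to cancel the mean edge-count shift --- the extra KL cost is $O(m_n^4/n^2) = o(m_n^2)$ --- and then run the LCLT; but as written, your tilt is the wrong one and does not achieve the claimed rate.
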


 \section{Upper bound}

    The main observation is that in the regime we are interested in (namely,
    eigenvalues or singular values of order $\omega(\sqrt n)$), the probability
    of large eigenvalues can be controlled by a union bound over the potential
    eigenvectors; a similar observation was also used in~\cite{BhattacharyaGanguly20}.

    Let $\calM_k$ be the set of $n \times n$ matrices with rank at
    most $k$ and Frobenius norm at most 1. Let $\calM_k^+ \subset \calM_k$ consist
    of those matrices that are symmetric and positive semidefinite.

    \begin{lemma}\label{lem:eigenvalues-inner-product}
        For any symmetric matrix $A$,
        \be
            \left(\sum_{i=1}^k \max\{0, \lambda_i(A)\}^2\right)^{1/2} = \sup_{M \in \calM_k^+} \inr{A}{M}.
        \ee

        For any matrix $A$,
        \be
            \left(\sum_{i=1}^k \sigma_i(A)^2\right)^{1/2} = \sup_{M \in \calM_k} \inr{A}{M}.
        \ee
    \end{lemma}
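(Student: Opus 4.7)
The plan is to prove both identities via the same two-step template: upper-bound $\inr{A}{M}$ by a trace inequality of von Neumann type combined with Cauchy--Schwarz, then exhibit an explicit optimizer built from the top singular (respectively spectral) decomposition of $A$ to obtain a matching lower bound.

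For the singular-value identity, I would invoke von Neumann's trace inequality, $|\tr(A^\top M)| \le \sum_i \sigma_i(A)\sigma_i(M)$. Since $M \in \calM_k$ has rank at most $k$, only the first $k$ singular values of $M$ are nonzero, so the sum truncates at $i=k$, and Cauchy--Schwarz together with $\|M\|_F \le 1$ yields $\inr{A}{M} \le (\sum_{i=1}^k \sigma_i(A)^2)^{1/2}$. For a matching lower bound I would take a truncated SVD: if $A = \sum_i \sigma_i(A)\, u_i v_i^\top$, set $Z = (\sum_{i=1}^k \sigma_i(A)^2)^{1/2}$ and $M^\star = Z^{-1} \sum_{i=1}^k \sigma_i(A)\, u_i v_i^\top$. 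Orthonormality of the singular vectors makes $M^\star$ of rank at most $k$ and Frobenius norm exactly $1$, and a direct computation gives $\inr{A}{M^\star} = Z$.

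For the positive-eigenvalue identity, I would use the symmetric Theobald version of the same inequality: for symmetric $A, M$, $\tr(AM) \le \sum_i \lambda_i(A)\lambda_i(M)$ with both eigenvalue sequences in non-increasing order. Since $M \in \calM_k^+$ is PSD, each $\lambda_i(M) \ge 0$, so $\lambda_i(A)\lambda_i(M) \le \max\{0, \lambda_i(A)\}\lambda_i(M)$, and $\mathrm{rank}(M) \le k$ truncates the sum at $i=k$. Cauchy--Schwarz then gives the claimed upper bound. For the matching lower bound, let $j$ be the number of positive eigenvalues among $\lambda_1(A),\dots,\lambda_k(A)$ and set $M^\star = Z^{-1} \sum_{i=1}^j \lambda_i(A)\, u_i u_i^\top$, where $u_i$ is the eigenvector for $\lambda_i(A)$ and $Z = (\sum_{i=1}^j \lambda_i(A)^2)^{1/2}$; this $M^\star$ lies in $\calM_k^+$ and attains $\inr{A}{M^\star} = Z$.

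There is no genuine obstacle here: once the correct trace inequality is in hand (von Neumann for arbitrary matrices, Theobald for symmetric ones), everything reduces to Cauchy--Schwarz and a direct check that the candidate optimizer lies in the right constraint set. The one subtle point is the PSD case, where the optimizer must use only those top-$k$ eigenvectors whose eigenvalues are \emph{positive}: including a negative-eigenvalue direction would simultaneously violate PSD-ness and strictly decrease $\inr{A}{M^\star}$, which is precisely why a $\max\{0,\lambda_i(A)\}$ rather than $\lambda_i(A)$ appears in the statement.
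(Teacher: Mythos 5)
Your proof is correct, and on the upper bounds it takes a genuinely different (and cleaner) route than the paper. You invoke von Neumann's trace inequality $|\tr(A^\top M)|\le\sum_i\sigma_i(A)\sigma_i(M)$, and its symmetric Theobald form for the eigenvalue case, let the rank-$\le k$ constraint on $M$ truncate the sum at $i=k$, and finish with Cauchy--Schwarz. The paper instead decomposes $A=A_+-A_-$ and writes the chain $\inr{A_+}{M}\le\|A_+\|_F\|M\|_F\le\sqrt{\sum_{i=1}^k\lambda_i(A_+)^2}$; as printed, the last step does not actually use that $M$ has rank at most $k$, and $\|A_+\|_F$ is not in general controlled by the top $k$ eigenvalues alone, so one has to insert a projection onto the range of $M$ (or fall back on exactly the von Neumann/Theobald estimate you use) to make it airtight. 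Your route makes the role of the rank constraint explicit and sidesteps that gap. The lower-bound constructions agree in substance: both normalize the truncated spectral (resp.\ singular value) decomposition in Frobenius norm; you are additionally careful to keep only the strictly positive eigenvalues among the top $k$ when building the PSD optimizer, which is precisely what produces the $\max\{0,\lambda_i(A)\}$ in the statement and which the paper's write-up glosses over.
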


    \begin{proof}
        To prove the first claim, assume without loss of generality that $\lambda_1(A) > 0$
        (if not, both sides are zero).
        Let $U D U^T = A$ be an eigen-decomposition of $A$ (where $D$ is
        diagonal and $U$ is orthogonal), and assume without loss of generality
        that the diagonal elements of $D$ are ordered as $\lambda_1(A) \ge \cdots \lambda_n(A)$.
        Let $\tilde D$ be the diagonal matrix with entries $\max\{0,
        \lambda_1(A)\}, \dots, \max\{0, \lambda_k(A)\},0, \dots, 0$, and define
        \be
            M = \frac{U \tilde D U^T}{\|\tilde D\|_F}
            = \frac{U \tilde D U^T}{\left(\sum_{i=1}^k \max\{0, \lambda_i(A)\}^2\right)^{1/2}}.
        \ee
        Then $M \in \calM_k^+$ and $\inr{A}{M} = \|\tilde D\|_F = \left(\sum_{i=1}^k \max\{0, \lambda_i(A)\}^2\right)^{1/2}$.
        This proves one direction of the first claim.

        For the other direction, take any $M \in \calM_k^+$, and decompose $A$ as
        $A_+ - A_-$, where $A_+$ and $A_-$ are positive semi-definite and the non-zero eigenvalues of $A_+$
        are the positive eigenvalues of $A$. Then
        \be
            \inr{A}{M} \le \inr{A_+}{M} \le \|A_+\|_F \|M\|_F
            \le \|A_+\|_F
            = \sqrt{\sum_{i=1}^k \lambda_i(A_+)^2}
            = \sqrt{\sum_{i=1}^k \max\{0, \lambda_i(A)\}^2}.
        \ee
        This proves the first claim. The proof of the second claim is identical, but uses
        a singular value decomposition instead of an eigen-decomposition.
    \end{proof}

    Hence, in order to prove the upper bounds in Theorem~\ref{thm:eigenvalue-ldp}, it suffices
    to control
    \be
        \Pr\left(\sup_{M \in \calM_k^+} \inr{A}{M} > t n^\alpha\right).
    \ee
    The first step is to replace the supremum with a finite maximum.

    \subsection{The net argument}

    \begin{definition}
        For a subset $\calN$ of a metric space $(X, d)$, we say that $\calN$ is an $\epsilon$-net of $X$
        if for every $x \in X$ there exists $y \in \calN$ with $d(x, y) \le \epsilon$.
    \end{definition}

    \begin{lemma}\label{lem:net-approximation}
        Let $\calN \subset \calM_k$ be an $\epsilon$-net (with respect to $\|\cdot\|_F$) for $\epsilon < \frac 12$.
        Then for any symmetric matrix $A$,
        \be
            \sup_{M \in \calM_k} \inr{A}{M} \le \frac{1}{1-2\epsilon} \sup_{N \in \calN} \inr{A}{N}.
        \ee
    \end{lemma}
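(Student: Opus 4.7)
The plan is to run a standard covering argument, but handle the fact that the difference of two rank-$k$ matrices may have rank $2k$ by splitting it into two rank-$k$ pieces via the singular value decomposition. Let $S = \sup_{M \in \calM_k} \inr{A}{M}$ and $T = \sup_{N \in \calN} \inr{A}{N}$; the goal is to show $S \le T/(1 - 2\epsilon)$.

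Fix any $M \in \calM_k$. Since $\calN$ is an $\epsilon$-net, pick $N \in \calN$ with $\|M - N\|_F \le \epsilon$. Write $\inr{A}{M} = \inr{A}{N} + \inr{A}{M - N} \le T + \inr{A}{M-N}$, so the problem reduces to bounding $\inr{A}{M - N}$. The obstacle is that $M - N$ has rank at most $2k$, not $k$, so it is not in general a scalar multiple of an element of $\calM_k$; the key step is to split it.

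To this end, take an SVD $M - N = \sum_{i=1}^{2k} \sigma_i u_i v_i^T$ with $\sigma_1 \ge \cdots \ge \sigma_{2k} \ge 0$, and let
\[
P_1 = \sum_{i=1}^{k} \sigma_i u_i v_i^T, \qquad P_2 = \sum_{i=k+1}^{2k} \sigma_i u_i v_i^T.
\]
Then $\mathrm{rank}(P_j) \le k$ and, by orthogonality of the singular vectors, $\|P_1\|_F^2 + \|P_2\|_F^2 = \|M - N\|_F^2 \le \epsilon^2$, so $\|P_j\|_F \le \epsilon$ for each $j$. Consequently $P_j/\epsilon \in \calM_k$ (since it has rank at most $k$ and Frobenius norm at most $1$), and so
\[
\inr{A}{P_j} = \epsilon \inr{A}{P_j/\epsilon} \le \epsilon S.
\]
Adding these bounds yields $\inr{A}{M - N} = \inr{A}{P_1} + \inr{A}{P_2} \le 2\epsilon S$, hence $\inr{A}{M} \le T + 2\epsilon S$. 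Taking the supremum over $M \in \calM_k$ gives $S \le T + 2\epsilon S$, and rearranging (which is legitimate because $2\epsilon < 1$) produces the claimed bound $S \le T/(1-2\epsilon)$.

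The only mildly delicate point is choosing the SVD splitting so that \emph{both} halves have Frobenius norm at most $\epsilon$; this is automatic from the Pythagorean identity on the singular values. Note also that we never used symmetry of $A$ or $M$, which is consistent with the statement being formulated for $\calM_k$ rather than $\calM_k^+$; an analogous argument using an eigendecomposition would give the corresponding statement for $\calM_k^+$ if needed.
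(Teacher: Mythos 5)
Your proof is correct but takes a different, and arguably cleaner, route than the paper's. Writing $S = \sup_{M \in \calM_k}\inr{A}{M}$ and $T = \sup_{N \in \calN}\inr{A}{N}$: the paper recursively expands $M = N + \epsilon M_0 + \epsilon M_1$ to express $M$ as a convergent infinite sum of scaled net elements, then sums the geometric series $\sum_\ell (2\epsilon)^\ell$ to obtain the $\frac{1}{1-2\epsilon}$ factor, whereas you establish the self-bounding inequality $S \le T + 2\epsilon S$ directly and rearrange, avoiding the infinite recursion and its convergence bookkeeping (the rearrangement is safe since $0 \in \calM_k$ and $\calM_k$ is compact, so $0 \le S < \infty$). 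The other substantive difference is how the rank-$\le 2k$ difference $M - N$ is split into two rank-$\le k$ pieces: you split by singular-value magnitude (top $k$ versus bottom $k$), while the paper splits into positive and negative semidefinite parts by eigenvalue sign. Your split is the more robust choice: the Pythagorean identity automatically keeps both halves within Frobenius norm $\epsilon$ and rank at most $k$, it applies verbatim to non-symmetric elements of $\calM_k$, and a sign-based split need not even yield rank-$\le k$ pieces in the first place, since a difference of two rank-$k$ symmetric matrices can have as many as $2k$ eigenvalues of the same sign.
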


    \begin{proof}
        Fix $M \in \calM_k$, and choose $N \in \calN$ with $\|N - M\|_F \le \epsilon$. Note that $N - M$
        has rank at most $2k$, and hence we can write $N - M = \epsilon M_0 + \epsilon M_1$ for some $M_0, M_1 \in \calM_k$.
        In other words, we can decompose
        \be
            M = N + \epsilon M_0 + \epsilon M_1
        \ee
        with $N \in \calN$ and $M_0, M_1 \in \calM_k$.
        It follows that
        \[
            \inr{A}{N} = \inr{A}{M} - \epsilon \inr{A}{M_0} - \epsilon \inr{A}{M_1} \ge \inr{A}{M}
            - 2\epsilon \sup_{M' \in \calM_k} \inr{A}{M'},
        \]
        and the claim follows.
    \end{proof}

    We have shown that to approximate the supremum it suffices to take a good enough net. In order to
    put this together with a union bound, we need a bound on the size of a good net.
    Such a bound can be found in~\cite[Lemma 3.1]{CandesPlan}.

    \begin{lemma}\label{lem:net-size}
        There is a constant $C$ such that for any $0 < \epsilon < 1$,
        there is an $\epsilon$-net (with respect to Frobenius norm) for $\calM_{k}$
        of size at most $(Ck/\epsilon)^{Cnk}$.
    \end{lemma}

    Applying a union bound over these nets gives the main result of this section: singular
    values and eigenvalues of $A$ can be controlled in terms of the deviations of linear functions
    of $A$. The main point here is that (as we will show in the next section) if $t \gg \sqrt n$
    then the $O(nk \ln \frac 1\epsilon)$ terms are negligible compared to the other terms.

    \begin{proposition}\label{prop:union-bound-conclusion}
        Let $A$ be a symmetric $n \times n$ random matrix with i.i.d.\ entries. For any integer $k \ge 1$,
        any $0 < \epsilon < \frac 12$, and any $t > 0$,
        \be
            \ln \Pr\left(
                \sum_{i=1}^k \sigma_i^2(A) > t
            \right)
            \le
            \sup_{M \in \calM_k} \ln \Pr\left(\inr{A}{M} \ge (1-2\epsilon) t\right)
            + O(nk \ln \frac 1\epsilon).
        \ee
    \end{proposition}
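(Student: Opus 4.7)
The plan is to translate the deviation event for the top-$k$ squared singular values into a union bound over a finite $\epsilon$-net of rank-$k$ matrices. All of the ingredients needed are already set up in the preceding three lemmas: the variational identity, the net-approximation inequality with its $(1-2\epsilon)^{-1}$ loss factor, and the cardinality bound for the net.

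First I would use Lemma~\ref{lem:eigenvalues-inner-product} to rewrite the left-hand event: the quantity $\sqrt{\sum_{i=1}^k \sigma_i^2(A)}$ equals $\sup_{M \in \calM_k} \inr{A}{M}$, so the tail event corresponds to this supremum exceeding a threshold proportional to $t$ (after reading off the square root of the LHS threshold). Second, I would fix an $\epsilon$-net $\calN \subset \calM_k$ provided by Corollary~\ref{cor:net-size} with $|\calN| \le (C/\epsilon)^{2nk}$. By Lemma~\ref{lem:net-approximation}, the inequality $\sup_{M \in \calM_k} \inr{A}{M} \ge t$ forces $\sup_{N \in \calN} \inr{A}{N} \ge (1-2\epsilon) t$, so the event on the left is contained in the finite union $\bigcup_{N \in \calN} \{\inr{A}{N} \ge (1-2\epsilon) t\}$.

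At this point the argument is a standard net-plus-union-bound procedure: a union bound yields
$$\Pr\!\left(\sup_{N \in \calN} \inr{A}{N} \ge (1-2\epsilon)t\right) \le |\calN| \sup_{N \in \calN} \Pr\!\left(\inr{A}{N} \ge (1-2\epsilon) t\right) \le |\calN| \sup_{M \in \calM_k} \Pr\!\left(\inr{A}{M} \ge (1-2\epsilon) t\right),$$
where in the last step I enlarge the finite supremum over $\calN$ to the supremum over all of $\calM_k$, which is free since $\calN \subset \calM_k$. Taking logarithms and using $\ln|\calN| \le 2nk\ln(C/\epsilon) = O(nk \ln \tfrac{1}{\epsilon})$ yields the claimed bound.

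There is no real obstacle here, since the three lemmas assembled in the previous subsections have already done the substantive work. The only small bookkeeping items are tracking the $(1-2\epsilon)$ factor produced by the net discretization and being careful that the iterated recursion in Lemma~\ref{lem:net-approximation} is legitimately exploited (it is, because the norm-$1$ constraint on each $M_v$ makes the remainders of the binary-tree expansion vanish in Frobenius norm), so that the single scalar $(1-2\epsilon)$ suffices and no further loss accumulates.
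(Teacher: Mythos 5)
Your argument is the same as the paper's: the variational identity of Lemma~\ref{lem:eigenvalues-inner-product}, the $\epsilon$-net discretization of Lemma~\ref{lem:net-approximation} with its $(1-2\epsilon)$ loss, and the cardinality bound of Corollary~\ref{cor:net-size} fed into a union bound. You correctly observe in passing that the event $\{\sum_i\sigma_i^2(A)>t\}$ equals $\{\sup_{M\in\calM_k}\inr{A}{M}>\sqrt{t}\}$, so the right-hand threshold should really be $(1-2\epsilon)\sqrt{t}$; the same minor mismatch appears in the paper's own statement and proof (and is implicitly corrected when the proposition is invoked in Corollary~\ref{cor:upper-bound}, where the left-hand side carries the square root), so your proof is aligned with the intended result.
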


    \begin{proof}
        For the first inequality, let $\calN$ be an $\epsilon$-net for $\calM_k$ according to
        Lemma~\ref{lem:net-size}. By Lemma~\ref{lem:eigenvalues-inner-product}
        and Lemma~\ref{lem:net-approximation}
        \begin{align*}
            \Pr\left(\sum_{i=1}^k \sigma_i^2(A) > t\right)
            &= \Pr\left(\sup_{M \in \calM_k} \inr{A}{M} > t\right) \\
            &\le \Pr\left(\max_{N \in \calN} \inr{A}{N} > (1-2\epsilon) t\right).
        \end{align*}
        By a union bound,
        \begin{align*}
            \Pr\left(\max_{N \in \calN} \inr{A}{N} > (1-2\epsilon) t\right)
            &\le \sum_{N \in \calN} \Pr\left(\inr{A}{N} > (1-2\epsilon) t\right) \\
            &\le |\calN| \sup_{M \in \calM_k} \Pr\left(\inr{A}{M} > (1-2\epsilon) t\right),
        \end{align*}
        which, by our bound on $|\calN|$, completes the proof of the first claim.
    \end{proof}

    We remark that it is possible to prove a version of Proposition~\ref{prop:union-bound-conclusion}
    for eigenvalues also, giving an upper bound on $\Pr(\sum \lambda_i^2(A) > t)$ in terms of
    \be
            \sup_{M^+ \in \calM_k^+} \Pr\left(\inr{A}{M^+} \ge t\right).
    \ee
    This can in principle give a better bound on the eigenvalues than for the singular values.
    The issue is that
    we do not know how to exploit the additional information that we are testing $A$ against a positive
    semidefinite matrix.

    \subsection{Hoeffding-type argument}

    Using a Hoeffding-type argument, we can get a sharp upper bound on
    \be
        \sup_{M \in \calM_k} \ln \Pr\left(\inr{A}{M} \ge t\right)
    \ee
    for any $k$ and any $t$ (in fact, the sharp upper bound turns out not to depend on $k$).

    \begin{lemma}\label{lem:order-of-optimization}
        If $\xi$ is subgaussian then
        \be
            4 \sup_{s \in \R} \frac{\Lambda_\xi(s)}{s^2} = \left(\inf_{s \in \R} \frac{\Lambda_\xi^*(u)}{u^2}\right)^{-1} < \infty.
        \ee
    \end{lemma}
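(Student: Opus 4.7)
The plan is to use the three Legendre transform properties listed just above the lemma (order reversal, involutivity on convex functions, and the explicit duality $cs^2 \leftrightarrow u^2/(4c)$) in tandem. Set $c := \sup_{s\in\R} \Lambda_\xi(s)/s^2$ and $d := \inf_{u \in \R} \Lambda_\xi^*(u)/u^2$; the subgaussianity hypothesis immediately gives $c < \infty$, so the finiteness assertion is free, and it remains to show $4cd = 1$.

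First I would record the qualitative setup: $\Lambda_\xi$ is convex (standard for cumulant generating functions) with $\Lambda_\xi(0) = 0$ and $\Lambda_\xi \ge 0$ by Jensen's inequality applied to $e^{s\xi}$, and correspondingly $\Lambda_\xi^*$ is convex, nonnegative, lower semicontinuous, and vanishes at the origin. Consequently both ratios $\Lambda_\xi(s)/s^2$ and $\Lambda_\xi^*(u)/u^2$ are nonnegative, and the values at $s=0$ and $u=0$ (formally $0/0$) can be interpreted as the appropriate one-sided limits without affecting the sup or inf.

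The heart of the argument is the following equivalence: for any $\kappa > 0$,
\be
\Lambda_\xi(s) \le \kappa s^2 \text{ for all } s \in \R \quad \Longleftrightarrow \quad \Lambda_\xi^*(u) \ge \frac{u^2}{4\kappa} \text{ for all } u \in \R.
\ee
The forward direction uses order reversal and the explicit formula $(\kappa s^2)^*(u) = u^2/(4\kappa)$. The reverse direction uses the same two properties together with involutivity: starting from $\Lambda_\xi^*(u) \ge u^2/(4\kappa)$, one order-reverses to $\Lambda_\xi^{**}(s) \le \kappa s^2$, and then uses $\Lambda_\xi^{**} = \Lambda_\xi$ (valid since $\Lambda_\xi$ is convex and l.s.c.).

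Applying this equivalence with $\kappa = c$ gives $d \ge 1/(4c)$, hence $4cd \ge 1$; applying it with $\kappa = 1/(4d)$, which is permissible because $c < \infty$ forces $d > 0$, gives $c \le 1/(4d)$, hence $4cd \le 1$. Combining yields $4cd = 1$, which is exactly the claimed identity. I don't anticipate any real obstacle: the whole proof is a two-line application of Legendre duality once the basic properties of $\Lambda_\xi$ and $\Lambda_\xi^*$ are noted, and the only minor care needed is in interpreting the ratios at the origin, where both numerator and denominator vanish but the relevant sup or inf is unaffected.
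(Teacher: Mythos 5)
Your proposal is correct and follows essentially the same route as the paper's proof: establish the equivalence $\Lambda_\xi(s) \le \kappa s^2$ for all $s$ $\iff$ $\Lambda_\xi^*(u) \ge u^2/(4\kappa)$ for all $u$ via order reversal and the explicit conjugate of $\kappa s^2$, and then apply involutivity $\Lambda_\xi^{**} = \Lambda_\xi$ for the reverse direction. If anything you are slightly more careful than the paper in spelling out the lower semicontinuity hypothesis needed for the biconjugation step and the behavior of the ratios at the origin.
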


    \begin{proof}
        The fact that $\sup_{s \in \R} \frac{\Lambda_\xi(s)}{s^2} < \infty$ is the definition of subgaussianity.
        To show the claimed identity, let $L = \sup_{t \in \R}\frac{\Lambda_\xi(t)}{t^2}$
        and define $M_L(s) = L s^2$. Clearly, $\Lambda_\xi(s) \le M_L(s)$ for all $s \in \R$. It
        follows that $\Lambda_\xi^*(u) \ge M_L^*(u) = \frac{u^2}{4L}$; in other words,
        \be
            \frac{\Lambda_\xi^*(u)}{u^2} \ge \frac{M_L^*(u)}{u^2} = \frac{1}{4L}
        \ee
        for all $u$. This shows that
        \be
            4 \sup_{s \in \R} \frac{\Lambda_\xi(s)}{s^2} \ge
            \left(\inf_{u \in \R} \frac{\Lambda_\xi^*(u)}{u^2}\right)^{-1}.
        \ee
        For the other direction, suppose that for some $L'$ we have
        $\Lambda_\xi^*(u) \ge \frac{u^2}{4 L'} = M^{1/(4L)'}(u)$ for every $u$.
        Then (since $\Lambda_\xi$ is convex) $\Lambda_\xi(t) = \Lambda_\xi^{**}(t) \le M_{1/(4L')}^*(t) = L' t^2$
        for every $t$. The definition of $L$ ensures that $L' \ge L$, and this shows the other direction
        of the claim.
    \end{proof}

    \begin{proposition}\label{prop:hoeffding-conclusion}
        Let $\xi$ be a random variable with everywhere-finite moment-generating function, and define
        \be
            \Lambda_\xi(s) = \ln \E \exp(s \xi)
        \ee
        to be the cumulant-generating function of $\xi$. Let $A$ be a symmetric random matrix with
        zero diagonal, and with upper-diagonal elements distributed independently according to $\xi$.
        Define $\ell^* = \sup_{s > 0} \frac{\Lambda_\xi(s)}{s^2}$.
        Then
        \be
            \sup_{\|M\|_F \le 1} \Pr(\inr{A}{M} > t)
            \le \exp\left(-\frac{t^2}{8 \sup_{s > 0} \frac{\Lambda_\xi(s)}{s^2}}\right)
            = \exp\left(-\frac{t^2}{2} \inf_{s > 0} \frac{\Lambda_\xi^*(s)}{s^2}\right).
        \ee
    \end{proposition}

    \begin{proof}
        Since $\inr{A}{M} = \inr{A}{(M + M^T)/2}$ and since $\|(M + M^T)/2\|_F \le \|M\|_F$,
        it suffices to consider only symmetric matrices $M$.
        Let $m = \frac{n(n-1)}{2}$ and let $\xi_1, \dots, \xi_m$ be the upper-diagonal elements of $A$,
        in any order.
        Let $\|M\| \le 1$ be symmetric, with upper-diagonal entries $a_1, \dots, a_m$. Then
        $\inr{A}{M} = 2 \sum_{i=1}^m a_i \xi_i$, and so (for any $s > 0$)
        \begin{align*}
            \Pr(\inr{A}{M} > t)
            &= \Pr\left(\sum a_i \xi_i > t/2\right) \\
            &= \Pr\left(e^{s \sum a_i \xi_i} > e^{st/2}\right) \\
            &\le e^{-st/2} \E e^{s \sum a_i \xi_i} \\
            &= \exp\left(\sum_i \Lambda_\xi(s a_i) - st/2\right),
        \end{align*}
        where the inequality follows from Markov's inequality.
        Now, $\sum_{i=1}^m a_i^2 \le \frac{1}{2} \|M\|_F^2 \le \frac 12$, and so
        if we set $\ell^* = \sup_{r > 0} \frac{\Lambda_\xi(r)}{r^2}$ then
        \be
            \sum_i \Lambda_\xi(s a_i) = \sum_i \frac{\Lambda_\xi(s a_i)}{(s a_i)^2} (s a_i)^2
            \le s^2 \sum_i \ell^* a_i^2 \le \frac{s^2 \ell^*}{2}.
        \ee
        Hence,
        \be
            \Pr(\inr{A}{M} > t) \le \exp\left(\frac{s^2 \ell^*}{2} - \frac{st}{2}\right),
        \ee
        and the first claim follows by optimizing over $s$.

        The second claim follows immediately from Lemma~\ref{lem:order-of-optimization}.
    \end{proof}

    Putting Proposition~\ref{prop:hoeffding-conclusion} together with
    Proposition~\ref{prop:union-bound-conclusion}, we arrive at the following
    upper bound for singular values:

    \begin{corollary}\label{cor:upper-bound}
        Let $A$ be a symmetric $n \times n$ random matrix with i.i.d.\ upper diagonal entries.
        Assuming that the entries are subgaussian and have cumulant-generating function $\Lambda$,
        let $L = \inf_{s \in \R} \frac{\Lambda^*(s)}{s^2}$. Then
        for any integer $k$ and any $t > 0$, if $t^2 L > 2 nk$ then
        \be
            \ln \Pr\left(\sqrt{\sum_{i=1}^k \sigma_i^2(A)} > t\right) \le -\frac{t^2 L}{2} + O\left(n k \ln \frac{t^2 L}{nk}\right).
        \ee
    \end{corollary}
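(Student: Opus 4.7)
The plan is to combine Proposition~\ref{prop:union-bound-conclusion} (the net/union-bound reduction) with Proposition~\ref{prop:hoeffding-conclusion} (the Hoeffding-type bound for a single test matrix), and then optimize the free parameter $\epsilon$ to balance the two error contributions.

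First I would apply Proposition~\ref{prop:union-bound-conclusion} at threshold $t$ and parameter $0 < \epsilon < \tfrac 12$, which yields
\be
\ln \Pr\left(\sqrt{\sum_{i=1}^k \sigma_i^2(A)} > t\right)
\le \sup_{M \in \calM_k} \ln \Pr\left(\inr{A}{M} \ge (1-2\epsilon)\, t\right)
+ O\!\left(nk \ln \tfrac{1}{\epsilon}\right).
\ee
Next, since each $M \in \calM_k$ has $\|M\|_F \le 1$, I apply Proposition~\ref{prop:hoeffding-conclusion} to bound the supremum on the right by $\exp\!\big(-\tfrac{(1-2\epsilon)^2 t^2}{2} \inf_{s > 0} \tfrac{\Lambda^*(s)}{s^2}\big)$. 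Because the corollary is stated with $L = \inf_{s \in \R} \tfrac{\Lambda^*(s)}{s^2} \le \inf_{s>0} \tfrac{\Lambda^*(s)}{s^2}$, I can weaken this to $\exp(-\tfrac{(1-2\epsilon)^2 t^2 L}{2})$, matching the form of the statement. (Note: one could obtain the slightly stronger bound $\min\{\inf_{s>0}, \inf_{s<0}\} \cdot t^2/2$ by additionally considering $-A$, which has the same singular values but cumulant-generating function $s \mapsto \Lambda(-s)$; however this is not needed for the stated corollary.) Combining gives
\be
\ln \Pr\left(\sqrt{\sum_{i=1}^k \sigma_i^2(A)} > t\right)
\le -\frac{(1-2\epsilon)^2 t^2 L}{2} + O\!\left(nk \ln \tfrac{1}{\epsilon}\right).
\ee

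Finally I optimize $\epsilon$. Expanding $(1-2\epsilon)^2 = 1 - 4\epsilon + O(\epsilon^2)$, the cost of introducing $\epsilon$ in the main term is on the order of $\epsilon\, t^2 L$. Choosing $\epsilon$ of order $nk/(t^2 L)$ balances this against the $nk \ln(1/\epsilon)$ error; the hypothesis $t^2 L > 2nk$ ensures that this choice satisfies $\epsilon < \tfrac 12$ as required by the net lemmas. With this choice the correction becomes $O(nk) + O\!\big(nk \ln \tfrac{t^2 L}{nk}\big) = O\!\big(nk \ln \tfrac{t^2 L}{nk}\big)$, giving the claimed bound.

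The argument is essentially bookkeeping, since both ingredients are already proved; the only step requiring any care is the $\epsilon$-optimization, in particular verifying that the admissibility constraint $\epsilon < \tfrac 12$ follows from the hypothesis $t^2 L > 2nk$. I do not anticipate any substantive obstacle.
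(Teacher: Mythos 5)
Your proposal is correct and essentially reproduces the paper's own proof: both combine Proposition~\ref{prop:union-bound-conclusion} with Proposition~\ref{prop:hoeffding-conclusion} and set $\epsilon = nk/(t^2 L)$, then absorb the $O(nk)$ correction into the $O(nk \ln \frac{t^2 L}{nk})$ term. The observation about weakening $\inf_{s>0}$ to $\inf_{s\in\R}$ is a reasonable bit of care, though the paper passes over it implicitly.
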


    \begin{proof}
        We combine 
        Proposition~\ref{prop:hoeffding-conclusion} and Proposition~\ref{prop:union-bound-conclusion},
        setting $\epsilon = \frac{nk}{t^2 L}$ (which is less than $\frac 12$ by assumption).
        This yields an upper bound of
        \be
            -\frac{t^2 L}{2} + O\left(nk + nk \ln \frac{t^2 L}{nk}\right),
        \ee
        and the $nk$ term can be absorbed in the final term.
    \end{proof}

    \begin{remark}\label{rem:inhomogeneous}
        Note that the argument leading to Corollary~\ref{cor:upper-bound} applies
        even when the entries $\xi_{ij}$ are not identically distributed as long as
        $L \le \inf_s \frac{\Lambda_{ij}^*(s)}{s^2}$ for every $i,j$, where $\Lambda_{ij}$
        is the cumulant-generating function of $\xi_{ij}$.
    \end{remark}

    \subsection{Lower bound}

    In this section, we give a lower bound that matches the upper bound of Corollary~\ref{cor:upper-bound}
    whenever $\sqrt n \ll t \ll n$.
    The starting point is the lower bound of Cram\'er's theorem~\cite[Theorem~27.3]{Kallenberg01}

    \begin{theorem}\label{thm:cramer}
        Let $\xi$ be a mean-zero random variable with everywhere-finite
        cumulant-generating function $\Lambda_\xi$.
        Let $\xi_1, \dots, \xi_m$ be independent copies of $\xi$. Then for any $t > 0$,
        \be
            \frac{1}{m} \ln \Pr\left(\sum_{i=1}^m \xi_i > mt\right) \to -\Lambda^*(t)
        \ee
        as $m \to \infty$.
    \end{theorem}

    \begin{proposition}\label{prop:lower-bound}
        In the setting of Corollary~\ref{cor:upper-bound}, suppose in addition that
        the function $s \mapsto s^{-2} \Lambda^*(s)$ achieves its minimum at some finite $s \in \R$.
        Then for any $1 \ll t \ll n^2$ and for any $w_1, \dots, w_k > 0$, we have
        \be
            \ln \Pr\left(\sum_{i=1}^k w_i \sigma_i(A_n) > |w| \sqrt t\right) \ge -\frac{t L}{2} - o(t).
        \ee
        (Here, $|w|$ denotes $\sqrt{\sum_i w_i^2}$.)
        If $s \mapsto s^{-2} \Lambda^*(s)$ achieves its minimum at some $s \ge 0$, then
        for any $1 \ll t \ll n^2$ and for any $w_1, \dots, w_k > 0$, we have
        \be
            \ln \Pr\left(\sum_{i=1}^k w_i \lambda_i (A_n) > |w| \sqrt t\right) \ge -\frac{t L}{2} - o(t).
        \ee
    \end{proposition}

    Choosing arbitrary $w_1, \dots, w_k$ and applying the Cauchy-Schwarz inequality,
    Proposition~\ref{prop:lower-bound} implies the same lower bounds on
    $\ln \Pr(\sum_i \sigma_i^2(A_n) > t)$
    and $\ln \Pr(\sum_i \lambda_i^2(A_n) > t)$. In particular, it really is a lower bound
    that matches the upper bound of Corollary~\ref{cor:upper-bound}.

    \begin{proof}
    Fix $t$ and assume that $\frac{\Lambda^*(s)}{s^2}$ achieves its minimum at $s_* \in \R$.
    Actually, we will assume $s_* \ne 0$; the case $s_* = 0$ is easily handled by replacing $s_*$ with
    $\epsilon > 0$ everywhere, and then sending $\epsilon \to 0$.
    Fix $w_1, \dots, w_k$ and assume $\sum_i w_i^2 = t$; because the statement of the proposition
    is homogeneous in $w$, this is without loss of generality. Now choose the
    smallest integers $\ell_1, \dots, \ell_k$
    so that $\ell_i - 1 \ge \frac{w_i}{|s_*|}$. We write $|\ell|^2$ for $\sum_i \ell_i^2$, and note
    that $|\ell|^2 \ge \frac{1}{s_*^2} \sum_i w_i^2 = \frac{t}{s_*^2}$, meaning that $1 \ll |\ell|^2 \ll n^2$.

    Let $M$ be a block-diagonal matrix, whose non-zero entries are all equal to $s_*$, appearing
    in blocks of size $\ell_i \times \ell_i$ for $i=1, \dots, k$. (The fact that $\sum_i \ell_i \le \sqrt k |\ell| \ll n$ implies
    that these blocks do indeed fit into an $n \times n$ matrix.)
    Then $M$ has rank $k$,
    and the singular values of $M$ are $|s_*| \ell_i$ for $i=1, \dots, k$;
    note that our choices of $\ell_i$ ensure that $w_i \le \sigma_i(M) \le w_i + 2 |s_*|$.
    Moreover, if we set $m = \sum_i \frac{\ell_i (\ell_i - 1)}{2}$ (which is also an integer, and counts
    the number of non-zero upper-diagonal elements of $M$) then
    $\inr{A}{M}$ is equal in distribution to $2 s_* \sum_{i=1}^m \xi_i$.
    Hence,
    \be
        \Pr\left(\inr{A}{M} > t\right)
        = \Pr\left(\sgn(s_*) \sum_{i=1}^m \xi_i > \frac{t}{2|s_*|}\right).
    \ee
    Now, $m = \frac 12 |\ell|^2 - \frac 12 \sum_i \ell_i$,
    while on the other hand
    \be
        \frac{t}{s_*^2} = \frac{\sum_i w_i^2}{s_*^2} \le \sum_i (\ell_i - 1)^2 = |\ell|^2 - 2 \sum_i \ell_i + 2k.
    \ee
    Since $\sum_i \ell_i \ge |\ell| \gg 1$, we have $\frac{t}{2s_*^2} \le m$ for sufficiently large $n$.
    Going back to our probability estimates, we have
    \begin{align*}
        \ln \Pr\left(\inr{A}{M} > t\right)
        &= \ln \Pr\left(\sgn(s_*) \sum_{i=1}^m \xi_i > \frac{t}{2|s_*|}\right) \\
        &\ge \ln \Pr\left(\sgn(s_*) \sum_{i=1}^m \xi_i > m |s_*|\right) \\
        &= -m \Lambda^*(s_*) + o(m) \\
        &= -\frac{t \Lambda^*(s_*)}{2 s_*^2} - o(t),
    \end{align*}
    where the second-last equality follows by Cram\'er's theorem (applied to the random variables $-\xi_i$
    in case $s_* < 0$).
    By von Neumann's trace inequality (see~\cite{Grigorieff})
    and the Cauchy-Schwarz inequality we have
    \be\label{eq:cs-for-singular-values}
        \inr{A}{M} \le \sum_{i=1}^k \sigma_i(A) \sigma_i(M) \le \sum_{i=1}^k \sigma_i(A) (w_i + 2 s_*) \\
        \le \sum_{i=1}^k \sigma_i(A) w_i + 2 s_* \sqrt k \sqrt{\sum_{i=1}^k \sigma_i^2(A)},
    \ee
    and hence
    \be
        \Pr\left(\inr{A}{M} > t\right) \le
        \Pr\left(\sum_{i=1}^k \sigma_i(A) w_i > t - t^{2/3}\right) + 
        \Pr\left(\sum_{i=1}^k \sigma_i^2(A) > \frac{t^{4/3}}{4 s_*^2 k}\right).
    \ee
    By Corollary~\ref{cor:upper-bound}, the second probability is of order $\exp(-\Omega(t^{4/3}))$,
    and hence
    \be
        \ln \Pr\left(\sum_{i=1}^k \sigma_i(A) w_i > t - t^{2/3}\right) \ge 
        (1-o(1)) \ln \Pr\left(\inr{A}{M} > t\right)
        \ge 
         -\frac{t \Lambda^*(s_*)}{2 s_*^2} - o(t).
    \ee
    Substituting in $t = |w| \sqrt t$ in place of $t - t^{2/3}$, the extra error term can be absorbed in the $o(t)$ term.

    For the second claim, simply note that if $s_* > 0$ then the matrix $M$ is positive semi-definite.
    Denoting $\lambda_i^+(A) = \max\{0, \lambda_i(A)\}$, we
    replace~\eqref{eq:cs-for-singular-values} by
    \be
        \inr{A}{M} \le \sum_{i=1}^k \lambda_i^+(A) \lambda_i(M) \le \sum_{i=1}^k \lambda_i^+(A) (w_i + 2 s_*)
        \le \sum_{i=1}^k \lambda_i^+(A) w_i + 2 s_* \sqrt k \sqrt{\sum_{i=1}^k \sigma_i^2(A)},
    \ee
    and the rest of the proof proceeds as before.
    \end{proof}

    There are a few extra useful facts that we can extract from the proof of
    Proposition~\ref{prop:lower-bound}, namely that we have explicit candidates for extremal
    eigenvectors and singular vectors. We will state these just for the largest eigenvector,
    but of course they also hold in other situations.

    \begin{corollary}\label{cor:smallest-eigenvector}
        Assume that $s \mapsto s^{-2} \Lambda^*(s)$ achieves its minimum at some $s_* \ge 0$.
        For $1 \ll t \ll n$, let $\ell = \lceil 1 +  t/s_* \rceil$ and define $v \in \R^n$
        by $v_1, \dots, v_\ell = s_*^{1/2} t^{-1/2}$ and $v_{\ell+1}, \cdots, v_n = 0$. Then
        $|v| \le 1 + o(1)$ and
        \be
            \ln \Pr(v^T A_n v \ge t) \ge -\frac{t^2 L}{2} - o(t^2).
        \ee
    \end{corollary}

    Corollary~\ref{cor:smallest-eigenvector} is immediate from the proof of Proposition~\ref{prop:lower-bound},
    because in the case $k = 1$ and $w_1 = \sqrt t$, the $M$ that we constructed in that proof is exactly $\sqrt t v v^T$.
    When we have extra quantitative control on the minimization of $\Lambda^*(s)/s$,
    it follows that the leading eigenvector must actually be close to the $v$ described above.
    We show this in Section~\ref{sec:conditional}, restricted for simplicity to the Bernoulli setting.

\subsection{The LDP}

Putting together Corollary~\ref{cor:upper-bound} and Proposition~\ref{prop:lower-bound},
we complete the proof of the LDP (Theorem~\ref{thm:eigenvalue-ldp}).
Take a sequence $m_n$ satisfying $\sqrt n \ll m_n \ll n$, and
set $X = \frac{1}{m_n} (\sigma_1(A_n), \dots \sigma_k(A_n))$.
Let $E \subset \R^k$
be any closed set, and let $t = \inf_{x \in E} |x|$. If $t > 0$ then $\frac{1}{m_n}
(\sigma_1(A_n), \dots, \sigma_k(A_n)) \in E$ implies that 
$\sum \sigma_i^2(A_n) > m_n^2 t^2$, and then Corollary~\ref{cor:upper-bound} implies that
\begin{multline*}
    \ln \Pr\left(X \in E\right)
\le \ln \Pr\left(\sum_{i=1}^k \sigma_i^2(A_n) > m_n^2 t^2\right) \\
        \le -\frac{m_n^2 t^2 L}{2} + O\left(n \ln \frac{m_n^2}{n}\right)
        = -\frac{m_n^2 t^2 L}{2} + o(m_n^2).
\end{multline*}
(And if $t = 0$ then the inequality above is trivially true.)

On the other hand, if $E \subset \R^k$ is open, then choose any $w \in E$. Since $E$
is open, there is some $\epsilon > 0$ so that if
$\inr{x}{w} \ge |w|^2$ and $|x|^2 \le |w|^2 + \epsilon$ then $x \in E$.
Now, Proposition~\ref{prop:lower-bound} implies that
\be
    \ln \Pr\left(\inr{X}{w} \ge |w|^2\right)
    = \ln \Pr\left(\sum_i \sigma_i(A_n) w_i \ge m_n |w|^2\right)
    \ge - \frac{m_n^2 |w|^2 L}{2} - o(m_n^2)
\ee
On the other hand, Corollary~\ref{cor:upper-bound} implies that
\begin{multline*}
    \ln \Pr\left(|X|^2 > |w|^2 + \epsilon\right)
    = \ln \Pr\left(\sum_i \sigma_i^2(A_n) \ge m_n^2 (|w|^2 + \epsilon)\right) \\
    \le - \frac{m_n^2 (|w|^2 + \epsilon) L}{2} - o(m_n^2).
\end{multline*}
In particular, $\Pr(|X|^2 > |w|^2 + \epsilon)$ is dominated by $\Pr(\inr{X}{w} \ge |w|^2)$,
implying that
\be
    \ln \Pr(X \in E) \ge 
    \ln \Pr\left(\inr{X}{w} \ge |w|^2 \text{ and } |X|^2 \le |w|^2 + \epsilon\right)
    \ge - \frac{m_n^2 |w|^2 L}{2} - o(m_n^2).
\ee
Since this holds for arbitrary $w \in E$, it implies the lower bound in the LDP.

The second part of Theorem~\ref{thm:eigenvalue-ldp} follows the exact same argument,
only it uses the second part of Proposition~\ref{prop:lower-bound}.

\subsection{The case of $\calG(n, m)$}
\label{sec:Gmn}

We next consider the case of Theorem \ref{thm:eigenvalue-lpd-Gnm}. The first
observation is that  $\pp \le \frac 12$ if and only if $\Lambda^*(s) / s^2$ achieves its
minimum at some non-negative $s$.

\begin{lemma}\label{lem:bernoulli-case}
    If $\xi = -\pp$ with probability $1-\pp$ and $\xi = 1-\pp$ with probability $\pp$ then
    $\Lambda^*$ (the convex conjugate of $\xi$'s cumulant generating function) satisfies
    \be
        \inf_{s \in \R} \frac{\Lambda^*(s)}{s^2} = \frac{\ln \frac{1-\pp}{\pp}}{1-2\pp},
    \ee
    and the minimum is uniquely attained at $s = 1 - 2\pp$.
\end{lemma}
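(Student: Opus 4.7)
The plan is to use the explicit formula $\Lambda^\ast(s) = (\pp+s)\ln\frac{\pp+s}{\pp} + (1-\pp-s)\ln\frac{1-\pp-s}{1-\pp}$ already recorded above, verify that $s_\ast := 1-2\pp$ satisfies the first-order optimality condition for $\Lambda^\ast(s)/s^2$, and then promote it to a global minimum via convex duality. Direct computation gives $(\Lambda^\ast)'(s) = \ln\frac{(\pp+s)(1-\pp)}{\pp(1-\pp-s)}$, and the symmetry $\pp+s_\ast = 1-\pp$, $1-\pp-s_\ast = \pp$ yields $(\Lambda^\ast)'(s_\ast) = 2\ln\frac{1-\pp}{\pp}$ and $\Lambda^\ast(s_\ast) = (1-2\pp)\ln\frac{1-\pp}{\pp}$. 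Both sides of the optimality equation $s(\Lambda^\ast)'(s) = 2\Lambda^\ast(s)$ then equal $2(1-2\pp)\ln\frac{1-\pp}{\pp}$, so $s_\ast$ is a critical point with value $L := \ln((1-\pp)/\pp)/(1-2\pp)$.

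To confirm this is the global infimum, I would apply Lemma~\ref{lem:order-of-optimization} to reduce the claim to showing $\Lambda(t) \le t^2/(4L)$ for every $t \in \R$, where $\Lambda(t) = -\pp t + \ln(1-\pp+\pp e^t)$. Computing $\Lambda''(t) = \pp(1-\pp)e^t/(1-\pp+\pp e^t)^2$ and analyzing $\Lambda'''$ shows $\Lambda''$ is unimodal with unique maximum $1/4$ attained at $t = \ln\frac{1-\pp}{\pp}$. A short calculation then verifies that the parabola $t\mapsto t^2/(4L)$ is tangent to $\Lambda$ at $t=0$ (since $\Lambda(0)=\Lambda'(0)=0$) and again at $t_\ast := 2\ln((1-\pp)/\pp)$, so $h(t) := t^2/(4L) - \Lambda(t)$ has tangent zeros there.

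The main work is to rule out further sign changes of $h$. Because $\Lambda''$ is unimodal with maximum at $t_\ast/2$, $h''$ is decreasing then increasing with a unique minimum at $t_\ast/2$, and hence has at most two zeros. A direct identity gives $h''(0) = h''(t_\ast) = 1/(2L) - \pp(1-\pp)$, and the key algebraic fact is that this quantity is nonnegative, which under the substitution $u = \ln\frac{1-\pp}{\pp}$ reduces to the elementary inequality $\sinh u \ge u$. This places both tangent zeros of $h$ outside the interval where $h''<0$, so $h'$ has at most three zeros in $\R$; combining this count with the asymptotics $h(\pm\infty)=+\infty$ and the signs at the tangent points forces $h \ge 0$ everywhere, with equality only at $t=0$ and $t=t_\ast$. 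Dualizing back yields $\Lambda^\ast(s) \ge Ls^2$ with equality only at $s=0$ and $s=s_\ast$, and since $\lim_{s\to 0}\Lambda^\ast(s)/s^2 = 1/(2\pp(1-\pp)) > L$ whenever $\pp \neq \tfrac12$ (again by $\sinh u > u$), the infimum is uniquely attained at $s_\ast$; the $\pp = \tfrac12$ case is the degenerate limit $s_\ast = 0$, $L = 2 = 1/(2\pp(1-\pp))$.

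The nontrivial step is this global-minimality argument: the two tangent zeros of $h$ must lie on the outer side of the region $\{h''<0\}$, otherwise $h$ could dip negative between them. The inequality $\sinh u \ge u$ is precisely what enforces this geometry and is the one non-routine input beyond the critical-point calculation.
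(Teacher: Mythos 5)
Your proof is correct, and it takes a genuinely different route from the paper's. The paper works directly on the $\Lambda^*$ side: it sets $r = \pp + s$ and $L(r) = \Lambda^*(r-\pp)/(r-\pp)^2$, shows that the critical-point equation $L'(r)=0$ is equivalent to the vanishing of an auxiliary function $F(r) = (\pp+r)\ln\frac{r}{\pp} + (2-\pp-r)\ln\frac{1-r}{1-\pp}$, and then counts roots of $F$ via $F''$ (which factors nicely with roots only at $r=\frac12$ and $r=\pp$), pins down a triple root of $F$ at $\pp$ and a simple root at $1-\pp$, and finally compares the interior critical value against the two endpoint values using a separate sign analysis of $(1-\pp)^2\ln(1-\pp) - \pp^2\ln\pp$. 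You instead dualize through Lemma~\ref{lem:order-of-optimization}, turning the claim into $\Lambda(t) \le t^2/(4L)$, and then run the analogous second-derivative root count on the deficit $h = t^2/(4L) - \Lambda$: the unimodality of $\Lambda''$ plays the role of the paper's control of $F''$, the double tangencies at $t=0$ and $t=t_*$ play the role of the paper's identified roots of $F$, and the one nontrivial inequality is $h''(0)=h''(t_*) = \frac{1}{2L}-\pp(1-\pp) \ge 0$, which reduces to $\sinh u \ge u$ for $u=\ln\frac{1-\pp}{\pp}$ -- a cleaner piece of algebra than the paper's. (You do then need the separate check $\lim_{s\to 0}\Lambda^*(s)/s^2 = \frac{1}{2\pp(1-\pp)} > L$ for uniqueness, but that is again just $\sinh u > u$, and the endpoints $s=\pm\pp$, $s=1-\pp$ come for free since $h\ge 0$ holds globally.) Both arguments land on the same parity-of-roots structure; yours trades the paper's reparametrization and endpoint comparison for a reduction via convex duality and a single elementary inequality.
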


\begin{proof}
    We recall that $\Lambda^*(s) = D(\pp + s, \pp)$ where
    \be
        D(r, \pp) = r \ln \frac{r}{\pp} + (1-r) \ln \frac{1-r}{1-\pp}
    \ee
    (with the convention that $D(r, \pp) = +\infty$ for $r \not \in (0, 1)$).
    Note that $D(r, \pp)$ is non-negative, convex, and has a double-root at $r=\pp$.
    Fix $\pp$ and define
    \be
        L(r) = \frac{D(r,\pp)}{(r-\pp)^2} = \frac{\Lambda^*(r-\pp)}{(r-\pp)^2}
    \ee
    (defined by continuity at $r=\pp$); our task is then to minimize $L$.
    We compute
    \be
        L'(r) = -\frac{(\pp + r) \ln \frac{r}{\pp} + (2 - \pp - r) \ln \frac{1-r}{1-\pp}}{(r-\pp)^3} =: -\frac{F(r)}{(r-\pp)^3}.
    \ee
    Then
    \begin{align*}
        F'(r) &= \ln \frac{r}{\pp} - \ln \frac{1-r}{1-\pp} + \frac{\pp}{r} - \frac{1-\pp}{1-r} \\
        F''(r) &= (r-\pp) \left(\frac{1}{r^2} - \frac{1}{(1-r)^2}\right).
    \end{align*}
    In particular, $F''$ has exactly two roots on $(0, 1)$: at $r=\frac 12$ and at $r=\pp$
    (counting with multiplicity in case $\pp=\frac 12$). It follows that $F$ has at most 4
    roots on $(0, 1)$. On the other hand, we can easily see that $F(\pp) = F'(\pp)
    = F''(\pp) = F(1-\pp) = 0$. Hence, $F(r)$ has a triple-root at $r=\pp$ and a
    single root at $r=1-\pp$, and no other roots. Since $r=\pp$ is only a
    triple-root, $L'(\pp) \ne 0$, and it follows that $r=1-\pp$ is the only root of $L'(r)$.
    It follows that $L(r)$ is minimized at either $r=0$, $r=1$, or $r=1-\pp$.
    The possible minimum values are therefore
    \be
        x := \pp^{-2} \ln \frac{1}{1-\pp},\qquad
        y :=(1-\pp)^{-2} \ln \frac{1}{\pp},\qquad \text{or }
        z := \frac{\ln \frac{1-\pp}{\pp}}{1-2\pp}.
    \ee

    We will show that $z$ is the smallest one. By symmetry in $\pp$ and $1-\pp$,
    it suffices to show that $z \le x$ for all $\pp$. Now,
    \be
        \pp^2(1-2\pp) (z - x) = \pp^2 \ln \frac{1-\pp}{\pp} + (1-2\pp) \ln (1-\pp) = (1-\pp)^2 \ln (1-\pp) - \pp^2 \ln \pp.
    \ee
    Let $f(\pp) = (1-\pp)^2 \ln(1-\pp) - \pp^2 \ln \pp$, and we need to show that $f(\pp) <
    0$ for $0 < \pp < \frac 12$ and $f(\pp) > 0$ for $\frac 12 < \pp < 1$. In fact,
    since $f(\pp) = - f(1-\pp)$, it suffices to show only one of these.
    Finally, note that $f(0) = f(\frac 12) = 0$, and $f''(\pp) > 0$ for $0 < \pp < \frac 12$,
    and it follows that $f(\pp) < 0$ for $0 < \pp < \frac 12$.
\end{proof}

Not only does Lemma~\ref{lem:bernoulli-case} establish the unique minimizer, it shows that
the behavior is locally quadratic around the minimizer. $D(q + s, q)$ is infinite outside the compact set $s \in [-q, 1-q]$, this also implies a quadratic lower bound on non-minimizers:
\begin{corollary}\label{cor:bernoulli-case}
    With the notation of Lemma~\ref{lem:bernoulli-case}, there is a constant $C = C(q)$ such that for every $s \in \R$,
    \[
        \frac{\Lambda^*(s)}{s^2} \ge \frac{\ln \frac{1-q}{q}}{1-2q} + C (s - (1 - 2q))^2.
    \]
\end{corollary}

To complete the proof of Theorem~\ref{thm:eigenvalue-lpd-Gnm},
it is enough to show that the upper bound of Corollary~\ref{cor:upper-bound}
and the lower bound of Proposition~\ref{prop:lower-bound} still hold in this setting;
then the proof of the LDP proceeds exactly as in the proof of Theorem~\ref{thm:eigenvalue-ldp}.
Checking Corollary~\ref{cor:upper-bound} is trivial: recalling that $A_n$ is the centered
adjacency matrix of $\calG(n, m)$ for $|m - \pp \binom{n}{2}| = O(1)$, we let
$\tilde A_n$ be the centered adjacency matrix of $\calG(n, \pp)$. Note that the distribution of
$A_n$ is equal to the distribution of $\tilde A_n$, conditioned on the event that $\tilde A_n$
has exactly $m$ positive entries on the upper diagonal; call this event $E$.
By Stirling's approximation, $\Pr(E) = \Omega(n^{-1})$, and it follows that for any event $F$,
\be
    \Pr(A_n \in F) = \Pr(\tilde A_n \in F \mid E) \le \frac{\Pr(A_n \in F)}{\Pr(E)} \le O(n \Pr(\tilde A_n \in F)).
\ee
In other words, $\ln \Pr(A_n \in F) \le \ln \Pr(\tilde A_n \in F) + O(\ln n)$,
and so Corollary~\ref{cor:upper-bound} immediately implies the same upper bound for $\calG(n, m)$.

For the lower bound, we need to look into the proof of Proposition~\ref{prop:lower-bound}.
Recall that in the proof of Proposition~\ref{prop:lower-bound}, we constructed a matrix $M$
with $O(t) = o(n^2)$ non-zero entries, all of which had the same value. For the $\calG(n, \pp)$
adjacency matrix $\tilde A_n$, $\inr{\tilde A_n}{M}$ has a (scaled and translated) binomial distribution;
for the $\calG(n, m)$ adjacency matrix $A_n$, $\inr{A_n}{M}$ has a (scaled and translated)
hypergeometric distribution.
Now, if $H_{k,n,r}$ denotes a hypergeometric random variable with population size $n$, $k$ successes,
and $r$ trials; and if $B_{\pp,r}$ denotes a binomial random variable with success probability $\pp$
and $r$ trials; then one easily shows using Stirling's approximation that
\be
    |\ln \Pr(H_{k,n,r} = s) - \ln \Pr(B_{k/n,r} = s)| = O(r^2/n).
\ee
In the setting of Proposition~\ref{prop:lower-bound}, the number of trials $r$ is the number of
non-zero elements in $M$, and since $r^2/n = O(t^2/n) = o(t)$, we have
\be
    \ln \Pr(\inr{A_n}{M} > t) \ge \ln \Pr(\inr{\tilde A_n}{M} > t) - o(t).
\ee
With this lower bound, we can follow the rest of the proof of Proposition~\ref{prop:lower-bound} 
to complete the proof of Theorem~\ref{thm:eigenvalue-lpd-Gnm}.

\section{Proof of Theorem~\ref{thm:lower-bound-non-sharp}}

Next, we consider the case that $\frac{\Lambda^*(s)}{s^2}$ does not achieve its
infimum at any $s > 0$, and we construct an example showing that taking $s \to 0$
does not yield the sharp bound. The basic idea is to use the first part
of Lemma~\ref{lem:eigenvalues-inner-product}, by producing a positive semi-definite
matrix $M$ and giving a lower bound on the tails of $\inr{A}{M}$. The main challenge
is to find a good matrix satisfying the positive definiteness constraint: in
Proposition~\ref{prop:lower-bound} we chose a matrix taking only one non-zero value,
specifically, $s_* \in \argmin \frac{\Lambda^*(s)}{s^2}$. The issue, of course, is that if $s_*$
is negative then such matrix cannot be positive semi-definite.
Instead, we will construct a rank-1 matrix taking four different non-zero values.

Consider a sequence $a_1, \dots, a_n$ whose non-zero elements take $m$ different values,
$\alpha b_1, \dots, \alpha b_m$, with $\alpha b_i$ repeated $\tilde m_i = \beta
m_i (1 + o(1))$ times respectively (the addition of the error term just allows us to deal
with the fact that matrices have integer numbers of rows and columns).
We will think of $m_i$ and $b_i$ as being fixed, while $\alpha$ and $\beta$ depend
on the tail bound that we want to show, with $\alpha$ being small and $\beta$ being large.
Then for any $t = \sum_{i=1}^m t_i$,
\be
    \Pr\left(\sum_i a_i \xi_i > t\right)
    \ge
    \prod_{i=1}^m
    \Pr\left(\sum_{j=1}^{\lceil \tilde m_i \rceil} \xi_j > t_i/(\alpha b_i)\right)
\ee
and so Theorem~\ref{thm:cramer} implies that if $\frac{t_i}{\alpha \beta m_i b_i} = \Theta(1)$
then
\begin{equation}\label{eq:non-tight-lower-bound-cramer}
    \ln \Pr\left(\sum_i a_i \xi_i > t\right)
    \ge - \beta \sum_i m_i \Lambda^*\left(\frac{t_i}{\alpha \beta m_i b_i}\right) - o\left(\beta \sum_i m_i\right).
\end{equation}
Our goal will be to choose the parameters $m_i, b_i, \alpha, \beta$, and $t_i$
to make the right hand side large. First, we will treat $m_i$ and $b_i$ as
given, and optimize over $t_i$, $\alpha$, and $\beta$.  We will enforce the
constraints $\sum_i t_i = t$ and $\sum_i a_i^2 = \alpha^2 \beta \sum_i m_i
b_i^2 = 2$.

Define
\begin{align*}
    \beta &= t^2 \frac{\sum_i m_i b_i^2}{2\left(\sum_i m_i b_i \Lambda'(b_i)\right)^2}, \\
    \alpha &= 2\left(\beta \sum_i m_i b_i^2\right)^{-1/2} = \frac{\sum_i m_i b_i \Lambda'(b_i)}{t \sum_i m_i b_i^2}, \text{ and }\\
    t_i &= \alpha \beta m_i b_i \Lambda'(b_i).
\end{align*}
With these choices, we have
\be
    \alpha^2 \beta = \frac{2}{\sum_i m_i b_i^2},
\ee
meaning that
\be
    \sum_i a_i^2 =\alpha^2 \beta \sum_i m_i b_i^2 = 2
\ee
and
\be
    \sum_i t_i = \alpha \beta \sum_i m_i b_i \Lambda'(b_i) = t.
\ee
(These turn out to be the optimal choices of $\alpha, \beta$, and $t$, although we do not need
to show this, since any choice will give us a bound.)
Plugging these parameters into~\eqref{eq:non-tight-lower-bound-cramer}, we obtain
\begin{equation}\label{eq:non-tight-lower-bound-substituted}
    \ln \Pr\left(\sum_i a_i \xi_i > t\right) \ge
    - \frac{t^2}{2} \cdot \frac{\sum_i m_i b_i^2 \cdot \sum_i m_i \Lambda^*(\Lambda'(b_i))}{\left(\sum_i m_i b_i \Lambda'(b_i)\right)^2}
    - o(t^2),
\end{equation}
where the $o(t^2)$ term depends on the parameters $m_i$ and $b_i$.

Next, we will define the parameters $m_i$ and $b_i$.
Take $\epsilon, \delta > 0$, and define
\begin{align*}
    m_1 &= \frac{1}{\epsilon^2},
        & b_1 &= \epsilon, \\
    m_2 &= 2\frac{\epsilon}{\delta^3},
        & b_2 &= -\delta, \\
    m_3 &= \frac{\epsilon^4}{\delta^6},
        & b_3 &= \frac{\delta^2}{\epsilon},
\end{align*}
and note that it is possible to define a positive semi-definite integral kernel taking the value
$b_i/2$ on a set of measure $2m_i$, simply by starting with a function taking the values $\sqrt{\epsilon}$
and $-\delta/\sqrt{\epsilon}$ on sets of size $1/\epsilon$ and $\epsilon/\delta^3$ respectively,
and then taking the outer product of that function with itself. It follows that if $\epsilon$
and $\delta$ are fixed and $\beta$ is large (and $\alpha$ is arbitrary),
then we can define a rank-1 p.s.d.\ matrix ($M$, say) with $(1 + o(1)) 2\beta m_i$ entries
taking the value $\alpha b_i/2$; note that $\|M\|_F^2 = \frac{1 + o(1)}{2}
\alpha \beta^2 \sum_i m_i = 1 + o(1)$. Since $A$ is a symmetric matrix with
$\xi$ on the upper diagonal,
this will yield
\be
    \inr{A}{M} = \sum_i a_i \xi_i,
\ee
where $(a_i)$ is a sequence containing $(1 + o(1)) \beta m_i$ copies of $\alpha b_i$.

We will first choose a small $\delta$
and then choose a smaller $\epsilon$. The error terms in the following analysis are taking this into
account, so for example we may write $\epsilon^2 \delta^{-k} = o(\epsilon)$ no matter how large $k$ is.
Our next task is to compute the various expressions in~\eqref{eq:non-tight-lower-bound-substituted},
in terms of $\epsilon$ and $\delta$. Before doing so, we observe some basic properties
of the Legendre transform.

\begin{lemma}\label{lem:large-term}
    Assume that $f$ is convex and differentiable
    and $\lim_{x \to \infty} \frac{f(x)}{x^2} = 0$.
    Then $\lim_{x \to \infty} \frac{f^*(f'(x))}{x^2} = 0$.
\end{lemma}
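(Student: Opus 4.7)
The plan is to exploit the identity $f^*(f'(x)) = x f'(x) - f(x)$, valid whenever $f$ is convex and differentiable (the supremum $\sup_y\{xy - f(x)\}$ defining $f^*(f'(x))$ is attained at $y=x$ by first-order optimality). Granted this identity, showing $f^*(f'(x))/x^2 \to 0$ reduces to showing $(x f'(x) - f(x))/x^2 \to 0$, and since $f(x)/x^2 \to 0$ by hypothesis, the whole problem collapses to verifying $f'(x)/x \to 0$.

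To pin down $f'(x)/x$ I would sandwich it using two instances of the subgradient inequality. By convexity, $f(y) \ge f(x) + (y-x)f'(x)$ for every $y$. Taking $y = 2x$ gives $x f'(x) \le f(2x) - f(x)$, and taking $y = 0$ gives $x f'(x) \ge f(x) - f(0)$. Dividing by $x^2$ yields
\[
\frac{f(x) - f(0)}{x^2} \;\le\; \frac{f'(x)}{x} \;\le\; \frac{f(2x) - f(x)}{x^2}.
\]
The upper bound is at most $4\cdot f(2x)/(2x)^2 + |f(x)|/x^2$, which tends to $0$ by hypothesis, and the lower bound tends to $0$ for the same reason. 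Hence $f'(x)/x \to 0$.

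Putting the two pieces together gives $f^*(f'(x))/x^2 = f'(x)/x - f(x)/x^2 \to 0$. There is no serious obstacle; the only point that deserves care is the justification of the identity $f^*(f'(x)) = xf'(x) - f(x)$, which requires differentiability (so that the subgradient set at $x$ is the singleton $\{f'(x)\}$) and convexity (so that the supremum in $f^*$ is attained at the conjugate point).
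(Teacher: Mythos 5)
Your proof is correct and follows essentially the same route as the paper's: establish the identity $f^*(f'(x)) = xf'(x) - f(x)$ and then reduce to showing $f'(x)/x \to 0$. The only difference is that you spell out the latter step via the subgradient inequalities at $y=0$ and $y=2x$, whereas the paper asserts it directly from monotonicity of $f'$; your version supplies a detail the paper leaves implicit.
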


\begin{proof}
    Fix $x$ and let $y = f'(x)$. By the definition of $f^*$, we can write
    \be
        f^*(y) = \sup_z \{zy - f(z)\},
    \ee
    and note that the supremum is attained at $x = z$ (because the derivative is zero, and
    the expression being supremized is concave). Hence,
    \be
        f^*(f'(x)) = x f'(x) - f(x).
    \ee
    Convexity of $f$ implies that $f'$ is non-decreasing, and so
    $f(x) = o(x^2)$ implies that $f'(x) = o(x)$ as $x \to \infty$.
    Hence, $f^*(f'(x)) = x f'(x) - f(x) = o(x^2)$.
\end{proof}

\begin{lemma}\label{lem:small-terms}
    If $f$ is convex with $f(0) = f'(0) = 0$ and $f''(0) > 0$, and if both $f$ and $f^*$ are $\mathcal{C}^4$
    in a neighborhood of $0$, then
    \be
        f^*(f'(\epsilon))
        = f''(0) \frac{\epsilon^2}{2} + ((f^*)'''(0) (f'')^3(0) + 3 f'''(0)) \frac{\epsilon^3}{6}
        + O(\epsilon^4)
    \ee
    as $\epsilon \to 0$.
\end{lemma}

\begin{proof}
    This is nothing but Taylor's theorem and a computation. Setting $g = f^*$, we compute
    \be
        \frac{d}{d\epsilon} g(f'(\epsilon)) = g'(f'(\epsilon)) f''(\epsilon),
    \ee
    and then
    \be
        \frac{d^2}{d\epsilon^2} g(f'(\epsilon)) = g''(f'(\epsilon)) (f''(\epsilon))^2 + g'(f'(\epsilon)) f'''(\epsilon),
    \ee
    and finally
    \be
        \frac{d^3}{d\epsilon^3} g(f'(\epsilon)) = g'''(f'(\epsilon)) (f''(\epsilon))^3 + 3 g''(f'(\epsilon)) f''(\epsilon) f'''(\epsilon) + g'(f'(\epsilon)) f''''(\epsilon).
    \ee

    Our assumptions on $f$ ensure that $g'(0) = 0$, and hence the first-order term vanishes,
    the second-order term at $\epsilon=0$ becomes
    \be
        g''(0) (f''(0))^2,
    \ee
    and the third-order term at $\epsilon=0$ becomes
    \be
        g'''(0) (f''(0))^3 + 3g''(0) f''(0) f'''(0).
    \ee
    Finally, note that $g''(0) f''(0) = 1$.
\end{proof}

Note that $\Lambda$ satisfies the assumptions on $f$ in Lemmas~\ref{lem:large-term}
(because we assumed that $\Lambda(s) = o(s^2)$)
and~\ref{lem:small-terms} (because every cumulant-generating function defined on
a neighborhood of zero is $\mathcal{C}^\infty$ in a neighborhood of zero).
Note that $\Lambda$ and $\Lambda^*$ both have a second-order root at zero. Define
\be
    L = \Lambda''(0) > 0.
\ee
Expanding out the parameters
in~\eqref{eq:non-tight-lower-bound-substituted}, we have
\be
    \sum_i m_i b_i^2 = 1 + 2 \frac{\epsilon}{\delta} + \frac{\epsilon^2}{\delta^2}
\ee
for the first term in the numerator. The second term in the numerator is
\begin{align*}
    \sum_i m_i \Lambda^*(\Lambda'(b_i)) 
    &= \frac{1}{\epsilon^2} (\Lambda^* \circ \Lambda')(\epsilon)
    + 2 \frac{\epsilon}{\delta^3} (\Lambda^* \circ \Lambda')(-\delta)
    + \frac{\epsilon^4}{\delta^6} (\Lambda^* \circ \Lambda')(\delta^2/\epsilon).
\end{align*}
According to Lemma~\ref{lem:large-term} and our assumptions on $\Lambda$, the
last term is $o(\epsilon^2)$. Applying Lemma~\ref{lem:small-terms} to the other terms, we have
\begin{align*}
    \sum_i m_i \Lambda^*(\Lambda'(b_i))
    &= \frac{L}{2} + M \frac{\epsilon}{6} + L \frac{\epsilon}{\delta} - M \frac{\epsilon}{3} + O(\epsilon^2 + \epsilon \delta) \\
    &= \frac{L}{2}\left(1 + \frac{2\epsilon}{\delta}\right) - M \frac{\epsilon}{6} + O(\epsilon^2 + \epsilon \delta),
\end{align*}
where
\be
    M = (\Lambda^*)'''(0) L^3 + 3 \Lambda'''(0).
\ee

For the denominator in~\eqref{eq:non-tight-lower-bound-substituted}, we ignore the $i=3$ contribution,
giving a lower bound of
\begin{align*}
    \sum_i m_i \Lambda'(b_i)
    &\ge \frac{\Lambda'(\epsilon)}{\epsilon} - 2 \frac{\epsilon \Lambda'(-\delta)}{\delta^2} \\
    &= \Lambda''(0) + \frac{\epsilon}{2} \Lambda'''(0) + O(\epsilon^2)
        + 2 \frac{\epsilon}{\delta} \Lambda''(0) - \epsilon \Lambda'''(0) + O(\epsilon \delta) \\
    &= L \left(1 + 2\frac{\epsilon}{\delta}\right) - \frac{\epsilon}{2} \Lambda'''(0) + O(\epsilon^2 + \epsilon \delta).
\end{align*}
Putting everything together,
\begin{align*}
&\frac{\sum_i m_i \Lambda'(b_i) \cdot \sum_i m_i \Lambda^*(\Lambda'(b_i))}{\left(\sum_i m_i \Lambda'(m_i)\right)^2} \\
&= \frac{\left(1 + \frac{2\epsilon}{\delta} + O(\epsilon^2)\right) \left(\frac{L}{2} \left(1 + \frac{2\epsilon}{\delta}\right) - \frac{\epsilon M}{6} + O(\epsilon^2 + \epsilon \delta)\right)}
{\left(L\left(1 + \frac{2\epsilon}{\delta}\right) - \frac{\epsilon \Lambda'''(0)}{2} + O(\epsilon^2 + \epsilon \delta)\right)^2} \\
&= \frac{\frac{L}{2} - \frac{\epsilon M}{6} + O(\epsilon^2 + \epsilon \delta)}
{L^2 - \epsilon L \Lambda'''(0) + O(\epsilon^2 + \epsilon \delta)} \\
&= \frac{1}{2L} - \frac{\epsilon M}{6 L^2} + \frac{\epsilon \Lambda'''(0)}{2L^2} + O(\epsilon^2 + \epsilon \delta) \\
&= \frac{1}{2L} - \frac{\epsilon (\Lambda^*)'''(0) L}{6} + O(\epsilon^2 + \epsilon \delta),
\end{align*}
and in particular it is possible to choose $\delta$ and $\epsilon$ so that this quantity
is at most $(1-\eta) \frac{1}{2L}$ for some $\eta > 0$.

Going back to~\eqref{eq:non-tight-lower-bound-substituted} and recalling that the sequence $a_i$
can be realized as the elements of a rank-1 p.s.d. matrix, $M$ say, with $\|M\|_F = 1 + o(1)$,
we have shown that
\be
    \ln \Pr(\lambda_1(A_n) > t) \ge \ln \Pr(\inr{A}{M} > t \|M\|_F)
    \ge -(1-\eta) \frac{t^2}{4L} - o(t^2).
\ee
Replacing $t$ by $m_n t$ and recalling that $L = \Lambda''(0) = \E \xi^2$ completes the proof of Theorem~\ref{thm:lower-bound-non-sharp}.

\section{Back to cycle counts}

We now turn to the proofs of Theorems~\ref{thm:large-p-cycle-count} and \ref{thm:small-p-cycle-count}. The proofs are very similar, so we devote most of this section
to proving Theorem~\ref{thm:large-p-cycle-count} and then indicate what changes must be made 
to obtain Theorem~\ref{thm:small-p-cycle-count}.

Our eigenvalue LDP (Theorem~\ref{thm:eigenvalue-ldp}) allows us to
control the cycle-count contribution from a constant number of very extreme
eigenvalues, but in order to fully characterize the behavior of the cycle count,
For this, we will use
a deviation inequality from~\cite{GuionnetZeitouni00}:

\begin{theorem}\label{thm:bulk-deviation}
    Assume that $\|\xi\|_\infty < \infty$, and let $f: \R \to \R$ be a 1-Lipschitz,
    convex function.
    Define $X_n = \frac 1n \sum_{i=1}^n f(n^{-1/2} \lambda_i(A_n))$.
    Then there is a universal constant $C < \infty$ such that for any $\delta \gg n^{-1}$,
    \be
        \Pr(|X_n - \E X_n| \ge \delta) \le C \exp\left(-\frac{n^2 \delta^2}{C \|\xi\|_\infty^2}\right).
    \ee
\end{theorem}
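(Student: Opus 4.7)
The plan is to view $X_n$ as a convex Lipschitz function of the $\binom{n}{2}$ independent upper-diagonal entries of $A_n$ and apply Talagrand's convex concentration inequality (which is the tool used in the cited paper \cite{GuionnetZeitouni00}). The argument breaks into three steps: verifying convexity, computing the Lipschitz constant, and converting the median-based tail bound to a mean-based one.

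First I would check that $X_n$ is a convex function of the matrix $A_n$. Writing $X_n(A) = n^{-1} \tr[g(A)]$ with $g(x) = f(n^{-1/2} x)$, convexity of $f$ makes $g$ convex, and Klein's trace inequality implies that $A \mapsto \tr[g(A)]$ is convex on the space of symmetric matrices. Since the map from the upper-diagonal entries of $A_n$ to the matrix itself is linear, $X_n$ is also a convex function of those entries.

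Next I would bound the Lipschitz constant. By the Hoffman--Wielandt inequality, $\sum_i (\lambda_i(A) - \lambda_i(A'))^2 \le \|A - A'\|_{HS}^2$, so by Cauchy--Schwarz, $\sum_i |\lambda_i(A) - \lambda_i(A')| \le \sqrt{n}\,\|A - A'\|_{HS}$. Combined with the 1-Lipschitz property of $f$,
\be
    |X_n(A) - X_n(A')| \le \frac{1}{n^{3/2}} \sum_i |\lambda_i(A) - \lambda_i(A')| \le \frac{1}{n} \|A - A'\|_{HS}.
\ee
Since altering a single upper-diagonal entry by $\delta$ changes $\|A-A'\|_{HS}$ by $\sqrt{2}|\delta|$ (the entry appears twice by symmetry), $X_n$ is $(\sqrt{2}/n)$-Lipschitz with respect to the Euclidean metric on its independent coordinates, which lie in $[-\|\xi\|_\infty, \|\xi\|_\infty]$.

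Finally, Talagrand's convex concentration inequality for bounded independent variables gives, for some universal constant $C_0$,
\be
    \Pr(|X_n - M X_n| \ge \delta) \le C_0 \exp\!\left( - \frac{n^2 \delta^2}{C_0 \|\xi\|_\infty^2}\right),
\ee
where $M X_n$ is the median. Integrating this tail bound shows $|\E X_n - M X_n| = O(\|\xi\|_\infty / n)$, so the hypothesis $\delta \gg n^{-1}$ lets me replace the median by the mean at the cost of an absorbed factor, yielding the claim. The only substantive obstacle is the convexity step, and there the key fact is Klein's lemma; everything else is routine bookkeeping of constants.
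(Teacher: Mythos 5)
Your proof is correct, but there is nothing in the paper to compare it against: the paper states Theorem~\ref{thm:bulk-deviation} as a deviation inequality imported directly from Guionnet--Zeitouni~\cite{GuionnetZeitouni00} and does not supply its own argument. Your reconstruction is the standard proof from that reference: convexity of $A \mapsto \tr[g(A)]$ for convex $g$, the Hoffman--Wielandt plus Cauchy--Schwarz computation giving the Lipschitz constant $\sqrt{2}/n$ with respect to the upper-diagonal entries, Talagrand's convex concentration inequality for bounded independent coordinates, and the routine median-to-mean conversion which is where the hypothesis $\delta \gg n^{-1}$ is used. All of the constants and exponents check out, so the proposal is a faithful account of the result the paper invokes.
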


Having controlled the bulk eigenvalues, we will
use Corollary~\ref{cor:upper-bound} to show
that the cycle count cannot be determined by $\omega(1)$ largish
eigenvalues.  Bear in mind that we will be applying our eigenvalue LDP to $\E A
- A$, where $A$ is the adjacency matrix, because
Theorem~\ref{thm:eigenvalue-lpd-Gnm} is for the positive eigenvalues of
centered matrices and we are interested in the negative eigenvalues here.

\subsection{The contribution of the bulk}

We consider two functions $f_1$ and $f_2$, where
\be
    f_1(x) = \begin{cases}
        0 & \text{if $x < 0$} \\
        x^k & \text{if $0 \le x < \sqrt{K}$} \\
        k K^{(k-1)/2} x - (k-1) K^{k/2} &\text{if $x \ge \sqrt{K}$}
    \end{cases}
\ee
and $f_2(x) = -f_1(-x)$. Then both $f_1$ and $f_2$ are $kK^{(k-1)/2}$-Lipschitz functions; also, $f_1$ is convex and $f_2$
is concave.

The following lemma is the main technical result of this section. Essentially,
it says that changing the cycle count using non-extreme eigenvalues carries a substantial entropy cost.
\begin{lemma}\label{lem:bulk-contribution}
    Let $A_n$ be the centered adjacency matrix of a $\calG(n,m)$ graph.
    There is a universal constant $C$ such that if $K \ge C$ then
    \be
        \Pr\left(\frac{1}{n^k}\sum_{i: \lambda_i(A_n) \ge -\sqrt {Kn}} \lambda_i^k(A_n) <  - t^k - C n^{-1}\right) \le \exp\left(-\Omega\left(\frac{n^k t^{2k}}{K^{k-1}}\right)\right).
    \ee
\end{lemma}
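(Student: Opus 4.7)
\noindent\emph{Proof plan.}

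The plan is to dominate $\sum_{i:\lambda_i\ge -\sqrt{Kn}}\lambda_i^3$ from below by a quantity to which Theorem~\ref{thm:bulk-deviation} applies. Writing $\mu_i = \lambda_i(A_n)/\sqrt n$ and $h_K := f_1+f_2$, a short case analysis on whether $\mu_i$ lies in $(-\infty,-\sqrt K)$, $[-\sqrt K,0]$, $[0,\sqrt K]$, or $(\sqrt K,\infty)$ --- using that the affine pieces of $f_1,f_2$ are the tangent lines to $x\mapsto x^3$ at $\pm\sqrt K$ --- shows that the contribution of each index $i$ to $\sum_{i:\mu_i\ge -\sqrt K}\mu_i^3$ dominates $h_K(\mu_i)$ termwise. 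Summing over $i$,
\[
\sum_{i:\lambda_i\ge -\sqrt{Kn}}\lambda_i^3 \;\ge\; n^{3/2}\sum_i f_1(\mu_i) \;+\; n^{3/2}\sum_i f_2(\mu_i).
\]

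Both $f_1/(3K)$ and $-f_2/(3K)$ are $1$-Lipschitz and convex, so I would apply Theorem~\ref{thm:bulk-deviation} to each and take a union bound to get, for every $\eta>0$,
\[
\Pr\Bigl(\sum_i h_K(\mu_i) < \E\sum_i h_K(\mu_i) - 6Kn\eta\Bigr) \;\le\; 2C\exp\bigl(-\Omega(n^2\eta^2)\bigr).
\]
Setting $\eta=\delta/(6Kn^{5/2})$ then converts this into the advertised rate $\exp(-\Omega(\delta^2/(n^3K^2)))$, provided I can absorb $n^{3/2}\E\sum_i h_K(\mu_i)$ into the $O(n^2)$ error.

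For that absorption I would split $\sum_i h_K(\mu_i) = \sum_i\mu_i^3 - \sum_i(\mu_i^3-h_K(\mu_i))$. The first piece equals $n^{-3/2}\tr\tilde A^3$, whose expectation is at most $O(n)$: for $\calG(n,\pp)$ it vanishes identically because $\tilde A$ has zero diagonal and independent mean-zero off-diagonals, and for $\calG(n,m)$ the only correction comes from mild hypergeometric dependence among the three edges of a triangle. The second piece is supported on the atypical event $|\mu_i|>\sqrt K$, and by a high-moment Markov bound using the Wigner-type estimate $\E\tr\tilde A^{2k}=O(n^{k+1})$ its expectation is $O(n/K^C)$ for any fixed $C$. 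Together these give $|\E\sum_i h_K(\mu_i)|=o(\sqrt n)$, hence $|n^{3/2}\E\sum_i h_K(\mu_i)|=O(n^2)$, as required.

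The main obstacle is this expectation bound: each of $\E\sum_i f_1(\mu_i)$ and $\E\sum_i f_2(\mu_i)$ is itself $\Theta(n)$ with opposite signs, so the convex/concave split sacrifices the cancellation that keeps the joint sum $\E\sum_i h_K(\mu_i)$ small. Making that cancellation quantitative --- either via the (approximate) symmetry of the semicircle bulk of $\tilde A/\sqrt n$, or by pairing positive- and negative-eigenvalue truncation tails combinatorially --- is the only nontrivial ingredient; once in hand, Theorem~\ref{thm:bulk-deviation} and the termwise domination by $h_K$ finish the proof.
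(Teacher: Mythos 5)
Your overall scheme coincides with the paper's: you use the same $f_1, f_2$, the same termwise domination $h_K(\mu_i)\le\mu_i^3\mathbf{1}_{\{\mu_i\ge-\sqrt K\}}$, and the same application of Theorem~\ref{thm:bulk-deviation} to $f_1$ and $-f_2$ with a union bound. The structure is correct. However, there are two genuine gaps.

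First, the expectation estimate does not close as written. You claim that the correction $\E\sum_i(\mu_i^3-h_K(\mu_i))$ is $O(n/K^C)$ by a moment bound with fixed order, and then assert that this yields $|\E\sum_i h_K(\mu_i)|=o(\sqrt n)$. But for fixed $K$ and fixed $C$, $O(n/K^C)=\Theta(n)$, not $o(\sqrt n)$; multiplied by $n^{3/2}$ this gives $\Theta(n^{5/2})$, which cannot be absorbed into the $O(n^2)$ error. To make the moment method work you must let the moment order $k$ grow (e.g.\ $k\asymp\sqrt n$, using that for $K$ larger than a universal constant the ratio $\E\tr\tilde A^{2k}/(nK)^k$ decays geometrically), which converts the bound to $\exp(-\Omega(\sqrt n))$. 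The paper instead just invokes the already-available Proposition~\ref{prop:hoeffding-conclusion} (via Corollary~\ref{cor:upper-bound}) to get $\Pr(\sigma_1(A_n)>\sqrt{Kn})\le\exp(-\Omega(n))$ for $K$ large, and then uses the trivial a.s.\ bound $\sigma_1\le n$ to conclude that $\E\bigl[\sigma_1^3\mathbf 1_{\{\sigma_1>\sqrt{Kn}\}}\bigr]$ is exponentially small. That route is shorter and requires no new moment estimate.

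Second, Theorem~\ref{thm:bulk-deviation} is a concentration inequality for matrices with independent (bounded) entries, and the centered adjacency matrix of a $\calG(n,m)$ graph does \emph{not} have independent upper-diagonal entries. You only acknowledge the $\calG(n,m)$ issue in the context of $\E\tr\tilde A^3$; you also need it for the concentration step itself. The paper handles this by first proving the lemma for $\calG(n,p)$ and then transferring to $\calG(n,m)$ via conditioning on the $\Omega(1/n)$-probability event that the edge count is exactly $m$, which only changes the log-probability by $O(\log n)$ and so is harmless at the target scale $\exp(-\Omega(\delta^2/(n^3K^2)))$. This transfer step needs to be stated explicitly in your argument.

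Finally, a minor point: Theorem~\ref{thm:bulk-deviation} carries the hypothesis $\delta\gg n^{-1}$; after rescaling by $3K$ this becomes $\eta\gg K/n$, which should be checked against your choice $\eta=\delta/(6Kn^{5/2})$ (it holds in the regime where the lemma is applied, $\delta\gg n^{9/4}$, but deserves a sentence).
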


\begin{proof}
    We will prove the claim when $A_n$ is the centered adjacency matrix of a $\calG(n, p)$
    graph, with $p = m/\binom{n}{2}$. The result for $\calG(n, m)$ follows from the fact
    that a $\calG(n,m)$ graph can be obtained by starting from $\calG(n,p)$ and conditioning
    on the (probability $\Omega(1/n)$) event that there are exactly $m$ edges.

    Note that
    \be
        f_1(x) + f_2(x) \le \begin{cases}
            0 &\text{if $x < -\sqrt K$} \\
            x^k &\text{if $x \ge -\sqrt K$}.
        \end{cases}
    \ee
    Hence,
    \be
        \sum_i (f_1 + f_2)(n^{-1/2} \lambda_i(A_n)) \le n^{-k/2} \sum_{i: \lambda_i(A_n) \ge -\sqrt {Kn}} \lambda_i^k(A_n).
    \ee

    Since $-f_2$ is convex, Theorem~\ref{thm:bulk-deviation} applies to both $f_1$ and $f_2$,
    giving
    \be
    \label{eq:f_1-plus-f_2}
        \Pr\left(\frac 1n \tr[(f_1 + f_2)(n^{-1/2} A_n)] \le \frac 1n \E \tr [(f_1 + f_2)(n^{-1/2} A_n)] - s\right)
        \le 2 \exp(-\Omega(n^2 s^2/K^{k-1}))
    \ee
    whenever $s = \omega(K^{(k-1)/2}/n)$. Plugging in~\eqref{eq:f_1-plus-f_2} gives
    \be
        \Pr\left( \sum_{i: \lambda_i(A_n) \ge -\sqrt{Kn}} \lambda_i^k(A_n) \le n^{k/2} \E \tr (f_1 +  f_2)(n^{-1/2} A_n) - s \right)
        \le 2 \exp\left(-\Omega\left(\frac{s^2}{K^{k-1} n^k}\right)\right).
    \ee

    It remains to control $\E \tr[(f_1 + f_2)(n^{-1/2} A_n)]$; specifically, we want to
    show that $\E \tr (f_1 + f_2)(n^{-1/2} A_n)$
    is close to $n^{-k/2} \E \tr(A_n^k)$. But note that
    \begin{multline*}
        |\tr[(f_1 + f_2)(n^{-1/2} A_n) - n^{-k/2} A_n^k]| \\
        \le n^{-k/2} \sum_{i: |\lambda_i| > \sqrt{K n}} |\lambda_i(A_n)|^k \le n^{-k/2 + 1} |\sigma_1(A_n)|^k 1_{\{\sigma_1(A_n)| > \sqrt{K n}\}},
    \end{multline*}
    where $\sigma_1(A_n)$ is the largest singular value of $A_n$.
    Proposition~\ref{prop:hoeffding-conclusion} implies that if $K$ is sufficiently large
    then
    $\E [|\sigma_1(A_n)|^k 1_{\{|\sigma_1(A_n)| > \sqrt{K n}\}}] \le \exp(-\Omega(\sqrt n))$.
    Hence,
    \be
        \Pr\left( \sum_{i: \lambda_i(A_n) \ge -\sqrt{Kn}} \lambda_i^k(A_n) \le \E \tr (A_n^k) - s - \exp(-\Omega(\sqrt n))\right)
        \le 2 \exp\left(-\Omega\left(\frac{s^2}{K^{k-1} n^k}\right)\right).
    \ee
    Finally, note that $|\E \tr (A_n^k)| = O(n^{k-1})$ and set $s = t^k n^k$.
\end{proof}

We remark that the comparison between the exponents in Lemma~\ref{lem:bulk-contribution} and
the exponents in our eigenvalue LDP (Theorem~\ref{thm:eigenvalue-lpd-Gnm}) determines the range of deviations
to which our cycle-count deviation bounds apply: we get sharp bounds whenever Lemma~\ref{lem:bulk-contribution}
ensures that the bulk contribution is smaller than the contribution of the most extreme eigenvalues.
To that end, note that by Theorem~\ref{thm:eigenvalue-lpd-Gnm}, a single eigenvalue of order $-tn$ (which contributes $t^k$
to the $k$-cycle density) carries an entropy cost of order $t^2 n^2$; on the other hand, Lemma~\ref{lem:bulk-contribution}
shows that using the bulk eigenvalues to achieve the same $t^k$ change in the $k$-cycle density costs $t^{2k}n^k$ in entropy.
These costs cross over when $t^k \asymp n^{-\frac{k(2-k)}{2k-2}}$, but on the other hand applying Lemma~\ref{lem:bulk-contribution}
in this way also requires that $t^k \gg n^{-1}$. Therefore, we see that the bulk contribution is dominated by
the extreme eigenvalue contribution whenever
\[
    t^k \gg n^{-\min\{1, \frac{k(2-k)}{2k-2})\}},
\]
and the right hand side is $n^{-3/4}$ for $k = 3$ and $n^{-1}$ for $k \ge 5$. This computation determines
our critical exponent $c_*$ given in~\eqref{eq:c*-def}.

\subsection{Many large negative eigenvalues}

There is one situation that we still need to handle: the possibility that there
are $\omega(1)$ eigenvalues smaller than $-\Omega(\sqrt n)$, and $\omega(1)$
of these eigenvalues contribute to the triangle count.

The first observation is that although Corollary~\ref{cor:upper-bound} is written
for a fixed \emph{number} of singular values, it can be easily transferred to an
inequality for singular values above a certain threshold.

\begin{corollary}\label{cor:upper-bound-threshold-independent}
    With the notation of Corollary~\ref{cor:upper-bound}, if $\sigma_i = \sigma_i(A)$ are the
    singular values of $A$ then
    \be
        \ln \Pr\left(\sqrt{\sum_{\sigma_i > \sqrt{Kn}} \sigma_i^2} \ge t\right)
        \le -\frac{t^2 L}{2} + O\left(\frac{t^2}{K} \ln K\right)
    \ee
\end{corollary}

\begin{proof}
    Set $k = \lceil t^2/(Kn)\rceil$ and observe that
    if $\sigma_1, \dots, \sigma_k \ge \sqrt{Kn}$ then $\sum_{i=1}^k \sigma_i^2 \ge t^2$. Hence, we either have
    \be
        \sum_{\sigma_i > \sqrt{Kn}} \sigma_i^2 \le \sum_{i=1}^k \sigma_i^2,
    \ee
    or else $\sum_{i=1}^k \sigma_i^2 \ge t^2$. It follows that
    \be
        \ln \Pr\left(\sqrt{\sum_{\sigma_i > \sqrt{Kn}} \sigma_i^2} \ge t\right)
        \le
        \ln \Pr\left(\sqrt{\sum_{i=1}^k \sigma_i^2} \ge t\right),
    \ee
    and we conclude by applying Corollary~\ref{cor:upper-bound} with our choice of $k$.

    Finally, if $A$ is the centered adjacency matrix of a $\calG(n,m)$ graph then we use
    the same argument that was used to extend Corollary~\ref{cor:upper-bound} to the
    $\calG(n,m)$ case, namely that a $\calG(n,m)$ graph can be obtained by conditioning
    a $\calG(n,q)$ graph on an event of $\Omega(n^{-1})$ probability.
\end{proof}

Corollary~\ref{cor:upper-bound-threshold-independent} for extends to the case of
a $\calG(n,m)$ graph by the same argument that was used to extend Corollary~\ref{cor:upper-bound}
in the proof of Theorem~\ref{thm:eigenvalue-lpd-Gnm}.
Namely, a $\calG(n,m)$ graph can be obtained by conditioning
a $\calG(n,q)$ graph on an event of $\Omega(n^{-1})$ probability, and the extra factor $n$ introduced
by the conditioning
is of smaller order. Applying Corollary~\ref{cor:upper-bound-threshold-independent}
to a centered $\calG(n,q)$ adjacency matrix for $q = m/\binom n2$, and then applying Lemma~\ref{lem:bernoulli-case}
to get the explicit expression for $L$, we obtain the following bound:

\begin{corollary}\label{cor:upper-bound-threshold-conditioned}
    If $A$ is the centered adjacency matrix of a $\calG(n,m)$ random graph, let
    $p = m/\binom n2$ and let
    \[
        L = \frac{\ln \frac{1-p}{p}}{1 - 2p}.
    \]
    Let $\sigma_i = \sigma_i(A)$ be the singular values of $A$.
    Then for any fixed $K$, if $t \gg \sqrt n$
    \be
        \ln \Pr\left(\sqrt{\sum_{\sigma_i > \sqrt{Kn}} \sigma_i^2} \ge t\right)
        \le -\frac{t^2 L}{2} + O\left(\frac{t^2}{K} \ln K\right)
    \ee
\end{corollary}

\subsection{The upper bound in Theorem~\ref{thm:large-p-cycle-count}}

Let $A$ be the adjacency matrix of a $\calG(n, m)$ graph and
recall that $\tau_k(A) = \frac{\tr[A^k]}{n^k} + O(1/n)$.
Let $\tilde A = A - \E A$; by Corollary~\ref{cor:centered-cycle-count},
\begin{align}
    \Pr(\tau_k(A) \le p^k - t^k)
    &= \Pr(\tr[A^k] \le n^k p^k - n^k t^k + O(n^{k-1})) \notag \\
    &\le \Pr(\tr[\tilde A^k] \le - n^k t^k + O(n^{k-1})) + \Pr(\|\tilde A\|_{\text{op}} \ge \Omega(n)).
    \label{eq:centering-triangles}
\end{align}
Writing out $\tr[\tilde A^k] = \sum_i \lambda_i^k(\tilde A)$, choose $K = \omega(1)$ and $\epsilon = o(1)$ such that
$K^{k-1}/\epsilon^{2/k} = o(n^{k-2} t^{2k-2})$; this is possible because $t^k \gg n^{-c_*}$ implies that $n^{k-2} t^{2k-2} \gg 1$.
Applying Lemma~\ref{lem:bulk-contribution} to $\tilde A$ gives
\begin{equation}\label{eq:bulk-contribution}
\Pr\left(n^{-k} \sum_{i: \lambda_i \ge -\sqrt{Kn}} \lambda_i^k(\tilde A) < - \epsilon t^k\right)
\le \exp\left(- \Omega\left(\frac{\epsilon^{2/k} t^{2k} n^k}{K^{k-1}}\right)\right)
= \exp(-\omega(n^2 t^2)).
\end{equation}

On the other hand, Jensen's inequality implies that
\be
    \left| \sum_{i: \lambda_i < -\sqrt{Kn}} \lambda_i^k \right|
    \le \left( \sum_{i: \lambda_i < -\sqrt{Kn}} \lambda_i^2\right)^{k/2}
    \le \left( \sum_{i: \sigma_i > \sqrt{Kn}} \sigma_i^2\right)^{k/2},
\ee
where $\lambda_i = \lambda_i(\tilde A)$ and $\sigma_i = \sigma_i(\tilde A)$.
Recall here that $L = \inf_{s \in \R} \frac{\Lambda^*(s)}{s^2}$, where $\Lambda$ is the cumulant-generating
function of a centered Bernoulli random variable with success probability $p$.
Lemma~\ref{lem:bernoulli-case} (with $\pp = 1-p$) implies that
$L = \frac{\ln \frac{p}{1-p}}{2p - 1}$.
By Corollary~\ref{cor:upper-bound-threshold-conditioned} (and taking into account the fact that $\epsilon = o(1)$ and $K = \omega(1)$),
\begin{align*}
    \Pr\left(\sum_{i: \lambda_i < -\sqrt{Kn}} \lambda_i^k(\tilde A) < - (1-\epsilon) t^k n^k\right)
    &\le
    \Pr\left(\sqrt{\sum_{i: \sigma_i > \sqrt{Kn}} \sigma_i^2} > (1-\epsilon)^{1/k} t n\right) \\
    & \le \exp\left(
        -\frac{L}{2} t^{2} n^2 + o(t^{2} n^2)
    \right).
\end{align*}
Combined with~\eqref{eq:bulk-contribution}, this yields
\be
    \ln \Pr\left(\tr[\tilde A^k] \le -t^k n^k\right)
    \le - \frac{L t^{2} n^2}{2}(1 + o(1)).
\ee
Now we apply~\eqref{eq:centering-triangles}, noting that $n^k t^k = \omega(n^{k-1})$,
and so $n^k t^k + O(n^{k-1}) = n^k t^k(1 + o(1))$, to get
\be
    \ln \Pr(\tau_k(A) \le p^k - t^k)
    \le \max\left\{-\frac{L t^{2} n^2}{2}(1 + o(1)), \ln \Pr(\|\tilde A\|_{\text{op}} \ge \Omega(n))\right\}.
\ee
By Theorem~\ref{thm:eigenvalue-lpd-Gnm}, the second term in the maximum is of order $-\Theta(n^2)$ and so
the first term wins.

This completes the proof of the upper bound in Theorem~\ref{thm:large-p-cycle-count}
but let us also note two other facts that we can easily extract from the proof.
From~\eqref{eq:bulk-contribution} we see that only the extremely negative eigenvalues contribute
to the cycle deviation:

\begin{corollary}\label{cor:small-eigenvalues-dominate}
    Conditioned on $\tau_k(A) \le p^k - t^k$,
    $\sum_{i: \lambda_i \le -\Omega(\sqrt n)} \lambda_i^k(\tilde A) \le -t^k n^k (1 - o(1))$
    with high probability.
\end{corollary}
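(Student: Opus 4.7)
The plan is to decompose $\tr[\tilde A^3] = \sum_i \lambda_i^3(\tilde A)$ into a ``bulk'' part coming from eigenvalues $\lambda_i \ge -\sqrt{Kn}$ and a ``tail'' part coming from the rest, then argue that conditionally on $\{\tau(A) \le p^3 - t\}$ the bulk part is negligible on the scale $tn^3$, so that the tail part must carry essentially the full deficit.

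First I would fix $K = \omega(1)$ and $\epsilon = o(1)$ as in the preceding upper-bound proof, so that the estimate \eqref{eq:bulk-contribution} is directly available: the ``bad bulk'' event
\be
    \mathcal{B} := \left\{\sum_{i:\lambda_i(\tilde A) \ge -\sqrt{Kn}} \lambda_i^3(\tilde A) < -\epsilon t n^3\right\}
\ee
has probability $\exp(-\omega(n^2 t^{2/3}))$. Next, I would combine this with Corollary~\ref{cor:centered-triangle-count}: on $\{\tau(A) \le p^3 - t\}$ we have $\tr[\tilde A^3] \le -tn^3 + O(n^2) = -tn^3(1 + o(1))$, since $tn^3 \gg n^2$ because $t \gg n^{-3/4} \gg n^{-1}$. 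On $\{\tau(A) \le p^3 - t\} \cap \mathcal{B}^c$, subtracting the bulk lower bound $-\epsilon tn^3$ from the total yields
\be
    \sum_{i:\lambda_i(\tilde A) < -\sqrt{Kn}} \lambda_i^3(\tilde A) \le -t n^3(1 - \epsilon - o(1)) = -tn^3(1-o(1)).
\ee
Since the extra eigenvalues in $(-\sqrt{Kn}, -c\sqrt n]$ are negative for any fixed $c > 0$, including them only makes the sum more negative, so the conclusion holds for $\sum_{\lambda_i \le -\Omega(\sqrt n)} \lambda_i^3(\tilde A)$.

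To finish, I need to convert the unconditional bound on $\Pr(\mathcal{B})$ into a conditional one. This requires a matching lower bound $\Pr(\tau(A) \le p^3 - t) \ge \exp(-C n^2 t^{2/3})$ for some finite $C$, which follows from Proposition~\ref{prop:lower-bound} applied to $-\tilde A$ (giving a lower bound on the probability that $\lambda_n(\tilde A)$ is very negative), combined with Corollary~\ref{cor:centered-triangle-count-lower-bound} and a standard high-probability estimate $\sum_i d_i^2 \le n^3 p^2 + O(n^2)$ for $\calG(n,m)$. With this in hand, the conditional probability of $\mathcal{B}$ is at most $\exp(-\omega(n^2 t^{2/3}) + O(n^2 t^{2/3})) = o(1)$, which is the definition of ``with high probability'' used in the statement.

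The main (though mild) obstacle is the last step: bridging the gap between the unconditional $\exp(-\omega(n^2 t^{2/3}))$ bound on $\mathcal{B}$ and a high-probability conditional statement requires the complementary lower bound on the triangle-deviation probability, which is not part of the upper-bound section but is available from the LDP machinery developed earlier. Everything else is bookkeeping: the decomposition and the inequality are immediate once the parameters $K$ and $\epsilon$ are fixed as in the upper-bound argument.
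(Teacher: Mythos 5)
Your proposal is correct and follows essentially the same approach the paper intends, though the paper is extremely terse here (merely citing \eqref{eq:bulk-contribution}). You have correctly filled in the missing bookkeeping: using Lemma~\ref{lem:centered-triangle-count} to get a pathwise bound on $\tr[\tilde A^3]$ on the event $\{\tau(A)\le p^3-t\}$, subtracting the bulk contribution on $\mathcal B^c$, observing that adding eigenvalues in $[-\sqrt{Kn},-c\sqrt n]$ only makes the sum more negative, and -- most importantly -- recognizing that converting the unconditional $\exp(-\omega(n^2t^{2/3}))$ bound on $\Pr(\mathcal B)$ into a conditional $o(1)$ statement requires the complementary lower bound $\Pr(\tau(A)\le p^3-t)\ge\exp(-O(n^2t^{2/3}))$, which the paper only establishes in the following subsection; this forward dependency is the one genuinely non-automatic step and you handled it correctly.
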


The other piece of information we can extract from our proof is that the vertex degrees
of a cycle-deficient graph are close to constant.

\begin{corollary}\label{cor:degree-profile}
    Conditioned on $\tau_k(A) \le p^k - t^k$, if $d_1, \dots, d_n$ are the vertex degrees of the graph
    then with high probability
    \be
        \sum_i (d_i - pn)^2 = o(t^k n^3).
    \ee
\end{corollary}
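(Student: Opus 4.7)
The plan is to exploit the exact version of Lemma~\ref{lem:centered-triangle-count}. Writing $\sum_i d_i^2 = n\bar d^2 + \sum_i (d_i - \bar d)^2$ with $\bar d = 2m/n$ and substituting $m = p\binom{n}{2} + O(1)$, routine algebraic cancellation in the lemma's identity yields
\be
    \tr[\tilde A^3] = \tr[A^3] - p^3 n^3 - 3p \sum_i (d_i - \bar d)^2 + O(n^2).
\ee
In effect, degree irregularity adds directly to the deficit of the centered cube: every unit of $\sum_i (d_i - \bar d)^2$ depresses $\tr[\tilde A^3]$ by an additional $3p$ units beyond whatever deficit is already attributable to $\tau(A)$. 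This coupling is the whole engine of the argument.

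Fix $\eta > 0$. On the event $\{\tau(A) \le p^3 - t\} \cap \{\sum_i (d_i - \bar d)^2 \ge \eta tn^3\}$, combining the identity with $\tr[A^3] = n^3 \tau(A) + O(n^2)$ and $tn^3 \gg n^2$ (which holds since $t \gg n^{-3/4}$) yields $\tr[\tilde A^3] \le -(1 + 3p\eta) tn^3 (1 - o(1))$. The upper-bound argument of the previous subsection actually delivers, for any $s$ with $n^{-3/4} \ll s \ll 1$,
\be
    \Pr\bigl(\tr[\tilde A^3] \le -sn^3\bigr) \le \exp\Bigl(-\tfrac{L}{2}\, s^{2/3}\, n^2 (1 + o(1))\Bigr),
\ee
with $L = (\ln\tfrac{p}{1-p})/(2p-1)$; applying this with $s = (1 + 3p\eta) t(1 - o(1))$ bounds the joint probability by $\exp(-\tfrac{L}{2}(1+3p\eta)^{2/3} t^{2/3} n^2 (1+o(1)))$. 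Dividing by the matching lower bound $\Pr(\tau(A) \le p^3 - t) \ge \exp(-\tfrac{L}{2} t^{2/3} n^2(1+o(1)))$ from Theorem~\ref{thm:large-p-triangle-count} gives a conditional probability at most $\exp(-\tfrac{L}{2}((1+3p\eta)^{2/3}-1) t^{2/3} n^2(1+o(1)))$, which tends to $0$. Letting $\eta = \eta_n \to 0$ sufficiently slowly then gives $\sum_i (d_i - \bar d)^2 = o(tn^3)$ with high probability, and since $\bar d - pn = O(1/n)$ the same estimate transfers to $\sum_i (d_i - pn)^2$.

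The only substantive step is the algebraic identity; everything after it is a comparison of exponential rates. The main structural reason the argument works is that the upper and lower bounds of Theorem~\ref{thm:large-p-triangle-count} agree to leading order, which is precisely where the hypothesis $p \ge \tfrac 12$ enters. For $p < \tfrac 12$ the two rates differ, and this approach would at best deliver $\sum_i (d_i - pn)^2 = O(tn^3)$ rather than $o(tn^3)$, consistent with the weaker structural information stated in Theorem~\ref{thm:small-p-triangle-count}.
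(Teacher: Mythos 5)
Your proposal is correct and is essentially the paper's own argument: extract from Lemma~\ref{lem:centered-triangle-count} the identity $\tr[\tilde A^3] = \tr[A^3] - p^3n^3 - 3p\sum_i(d_i - pn)^2 + O(n^2)$ (the paper phrases it with $pn$ rather than $\bar d$, but these differ by $O(1)$ per coordinate, so $\sum_i(d_i - pn)^2 = \sum_i(d_i - \bar d)^2 + O(n)$ and the two forms are interchangeable at the relevant scale), then note that a large degree variance forces an extra $\Omega(\epsilon)$ fraction onto the deficit of $\tr[\tilde A^3]$, apply the upper bound from the preceding subsection to that inflated deficit, and divide by the matching lower bound from Theorem~\ref{thm:large-p-triangle-count}. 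Your closing observation --- that the whole argument hinges on the upper and lower rates matching to leading order, which is exactly what $p \ge \tfrac 12$ buys --- is also the right structural reading. One tiny slip: $\bar d - pn = 2m/n - pn = -p + O(1/n)$, which is $O(1)$, not $O(1/n)$; this is harmless since $n(\bar d - pn)^2 = O(n) = o(tn^3)$, which is all you need.
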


\begin{proof}
    In the proof of the upper bound of Theorem~\ref{thm:large-p-cycle-count}, recall
    that $\ln \Pr(\|\tilde A\|_{\text{op}} \ge \Omega(n)) \ll \ln \Pr(\tau_k(A) \le p^k - t^k)$,
    and it follows that conditioned on $\tau_k(A) \le p^k - t^k$ we have $\|\tilde A\|_{\text{op}} = o(n)$
    with high probability.
    Since $\Pr(\tau_k(A) \le p^k - t^k(1 + \epsilon)) \ll \Pr(\tau_k(A) \le p^k - t^k)$, we also
    have $\tau_k(A) \ge p^k - t^k(1 + \epsilon)$ with high conditional probability.
    But on the event that $\|\tilde A\|_{\text{op}} \le \epsilon n$ and $\tau_k(A) \ge p^k - t^k(1 + \epsilon)$,
    Lemma~\ref{lem:centered-cycle-count} implies that
    \[
        n^{k-3} \sum_i (d_i - pn)^2 \le \epsilon t^k n^k,
    \]
    and the claim follows.
\end{proof}

\subsection{The lower bound in Theorem~\ref{thm:large-p-cycle-count}}

The idea here is to partition the adjacency matrix into blocks, and then consider
the event that certain prescribed numbers of edges are present in each block. By choosing
all parameters correctly, we can ensure that this event has the correct probability, and
also that on this event the cycle density will behave as desired.

Recall that $L = \sup_s \frac{\Lambda^*(s)}{s^2}$ and that $s_* = 2p-1$ is the maximizing value of $s$.
Let $\ell$ be the closest integer to $t n / s_*$ and
let $\xi_1, \dots, \xi_{\binom n2}$ be some ordering of the upper diagonal of $\tilde A$.
Let $U_{11}$ be the collection of $i$ for which $\xi_i$ is in the upper-left $\ell \times \ell$ 
submatrix; let $U_{12}$ be the collection of $i$ for which $\xi_i$ is in the upper-right $\ell \times (n - \ell)$
submatrix; and let $U_{22}$ be the remaining indices.
Define $z$ by $\ell = z n$, and note that $z = (1 + O(1/n)) t / s_*$.
Now let $S_* = \lfloor s_* |U_{11}| \rfloor$
and $T_* = \lceil z |U_{12}| s_* \rceil$, and
let $\Omega$ be the event that
\begin{align}
    \frac{1}{|U_{11}|} \sum_{i \in U_{11}} \xi_i &= S_* \label{eq:omega-1} \\
    \frac{1}{|U_{12}|} \sum_{i \in U_{12}} \xi_i &= T_* \label{eq:omega-2}
\end{align}

We claim that
$\ln \Pr(\Omega) \ge -\frac{t^{2} n^2 L}{2}(1 + o(1))$,
and that conditioned on $\Omega$, $\tau_k(G) \le p^k - t^k$ with non-negligible probability. Together, these
imply the lower bound of Theorem~\ref{thm:large-p-cycle-count}.

\begin{lemma}
\[
\ln \Pr(\Omega) \ge -\frac{t^{2} n^2 L}{2}(1 + o(1)).
\]
\end{lemma}

\begin{proof}
Let $\Omega_1$ be the event of~\eqref{eq:omega-1} and let $\Omega_2$ be the event of $\eqref{eq:omega-2}$.
These events can be described simply in terms of hypergeometric random variables. Indeed,
$\sum_{i \in U_{11}} (\xi_i + p)$ is a hypergeometric random variable with $\binom{\ell}{2}$ trials,
and a population of size $\binom{n}{2}$ containing $m$ successes; therefore $\Omega_1$ is just the
event that this hypergeometric variable takes a particular value. Conditioned on $\Omega_1$,
$\sum_{i \in U_{12}} (\xi_i + p)$ is a hypergeometric random variable with $\ell (n-\ell)$ trials,
and a population of size $\binom{n}{2} - \binom{\ell}{2}$ containing $m - S_*$ successes; the event $\Omega_2$
is just the event that this hypergeometric variable takes a particular value.

These hypergeometric probabilities can be computed explicitly; we will make use of the approximation
that comes simply from applying Stirling's approximation to the explicit computation
(see, e.g.,~\cite[Lemma~2.1.33]{DemboZeitouni}): if $Z$ is a hypergeometric random variable with
$r$ trials from a population of size $R$ with $\alpha R$ successes, then for any integer $b$ in the range of $Z$,
\begin{equation}\label{eq:ldp-hypergeometric}
    \frac 1r \ln \Pr(Z = b) = - D(b/r, \alpha) - \frac{1 - r/R}{r/R} D\left(\frac{\alpha - b/R}{1 - r/R}, \alpha\right) + O\left(\frac{\ln R}{r}\right)
\end{equation}
where $D(q + s, q) = (q + s) \ln \frac {q + s}{q} + (1-q - s) \ln \frac{1-q - s}{1-q}$ is, as before,
the Legendre transform of a centered Bernoulli variable's cumulant generating function.

Applying~\eqref{eq:ldp-hypergeometric} to $\Omega_1$, since $\ln n \ll \ell \ll n$ and since $D(\alpha + \epsilon, \alpha) = \Theta(\epsilon^2)$, we have
\[
    \frac{1}{|U_{11}|} \Pr(\Omega_1) \to -D(p + s^*, p) = -L,
\]
and hence
\[
    \ln \Pr(\Omega_1)
= -(1 + o(1)) \frac{t^{2} n^2 L}{2}.
\]

Since $\Omega = \Omega_1 \cap \Omega_2$, it suffices to show that
\[
\Pr(\Omega_2 \mid \Omega_1) = \exp(-o(t^{2} n^2)).
\]
Recall that conditioned on $\Omega_1$, $\sum_{i \in U_{12}} (\xi_i + p)$ is hypergeometric with $\Theta(z n^2)$
trials, a population size of $\Theta(n^2)$, and $m - O(z^2)$ successes.
The event $\Omega_2$ is asking for this hypergeometric variable to deviate from its mean (which is
of order $\Theta(z n^2)$) by a fixed quantity of smaller order, namely $\Theta(z^2 n^2)$. By~\eqref{eq:ldp-hypergeometric},
\[
    \frac{1}{|U_{12}|} \ln \Pr(\Omega_2 \mid \Omega_1) = -D(p + \Theta(z), p) + o(z) = -o(z).
\]
Therefore,
$ \Pr(\Omega_2 \mid \Omega_1) = \exp(-o(z^2 n^2)) = \exp(-o(t^{2} n^2))$.
\end{proof}

Next, we show that conditioned on $\Omega$, $G$ has fewer cycles.
For ease of notation, let us first describe the conditional distribution of $G$ given $\Omega$
in terms of different parameters.
Let $\ell = zn$ for $n^{-{2/3}} \ll z \ll 1$, and fix $0 < q < p$. Consider a random graph $G$
with $m = p \binom n2$ edges, and let $A$ be its adjacency matrix.
Suppose that $(p - q) \binom{\ell}{2}$ of these edges
are uniformly distributed on the upper diagonal of the top-left $\ell \times \ell$
block of $A$, $(p + \frac{z}{1-z} q) \ell (n-\ell) + O(1)$ are uniformly distributed on the
top-right $\ell \times (n - \ell)$ block, and $(p - \frac{z^2}{(1-z)^2} q) \binom{n - \ell}{2} + O(1)$
are uniformly distributed on the remaining part of the upper-diagonal.
The $O(1)$ error terms ensure that it is possible to satisfy the constraints with
integer numbers of edges, and these error terms are also compatible
with the requirement that there are $p \binom{n}{2}$ edges in total.
Finally, note that the distribution of $G$ conditioned on $\Omega$ is the
same as the distribution we have described above, for some $q$ within $\Theta(1/\ell^2)$ of $s_*$.

\begin{lemma}\label{lem:conditioned-cycle-density}
    For the random graph $G$ described above,
    \[
        \E \tau_k(G) = p^k - z^k q^k + o(z^k)
    \]
    and
    \[
        \Var(\tau_k(G)) = O(n^{-2}).
    \]
    In particular, if $z^k q^k = \omega(n^{-1})$ then $\tau_k(G) \le p^k - z^k q^k + o(z^k)$ w.h.p.
\end{lemma}

Recalling that $z^k q^k \ge (1 + o(1)) t^k n^k$, Lemma~\ref{lem:conditioned-cycle-density}
completes the proof of the lower bound of Theorem~\ref{thm:large-p-cycle-count}, after
replacing $t$ by $(1 - o(1)) t$.

\begin{proof}
    To compute the expected number of cycles, let $B$ the the $n \times n$ block matrix that agrees with $\E A$
    except on the diagonal. That is, $B$ takes the value $p - q$
    on the top-left $\ell \times \ell$ block, the value $(p + \frac{z}{1-z} q) + O(n^{-2} z^{-1})$ on the top-right $\ell \times (n-\ell)$ block,
    and the value $(p - \frac{z^2}{(1-z)^2} q) + O(n^{-2})$ on the bottom $(n - \ell) \times (n-\ell)$ block.
    Then $B$ has rank-2, and it is well-approximated by the rank-2 matrix $p \1 - q w w^T$, where $w$ has ones in the
    first $zn$ entries, and takes the value $-z/(1-z)$ in the other entries. More precisely,
    \[
        \|B - (p \1 - q w w^T)\|_F = O(n^{-1} z^{-1/2}) = o(1),
    \]
    with the main contribution coming from the $\Theta(z n^2)$ entries of size $O(z^{-1} n^{-2})$.
    Since $\1$ and $w$ are orthogonal, $p \1 - q w w^T$ has eigenvalues $p n$ and $-q |w|^2 = -q z n + O(z^2 n)$.
    By Weyl's eigenvalue inequalities, $B$ has eigenvalues $p n + o(1)$ and $-q z n + o(1)$.
    Therefore $\tr B^k = p^k n^k - q^k z^k n^k + O(n^{k-1})$.

    Next, consider $\tr [(\E A)^k]$. Recalling that $\E A$ agrees with $B$ except on the diagonal (because $\E A$
    is zero on the diagonal and $B$ is not), we have $\|\E A - B\|_{\text{op}} = O(1)$, and so Weyl's eigenvalue
    inequalities imply that $\E A$ has an eigenvalue of $p n + O(1)$, an eigenvalue of $-q z n + O(1)$,
    and its remaining eigenvalues are bounded. Therefore, $\tr[(\E A)^k] = p^k n^k - q^k z^k n^k + O(n^{k-1})$.

    To compare $\tr[(\E A)^k]$ to $\E \tau_k(G)$, expand $\tr[(\E A)^k]$ in terms of closed walks of length $k$:
    let $\Gamma_k$ be the set of $(k+1)$-tuples $v_1, \dots, v_{k+1}$ with $v_{k+1} = v_1$ and $v_i \ne v_{i+1}$ for all $i$.
    Then
    \begin{equation}\label{eq:trace-expansion}
        \tr[(\E A)^k] = \sum_{(v_1, \dots, v_{k+1}) \in \Gamma_k} \prod_{i=1}^k (\E A)_{v_i,v_i+1}.
    \end{equation}
    Let $\tilde \Gamma_k$ be the set of $(k+1)$-tuples $v_1, \dots v_{k+1}$ in $\Gamma_k$ such that $v_1, \dots, v_k$
    are distinct. Then $|\Gamma_k - \tilde \Gamma_k| = O(n^{k-1})$. Since each summand in~\eqref{eq:trace-expansion}
    is bounded,
    \[
        \tr[(\E A)^k] = \sum_{(v_1, \dots, v_{k+1}) \in \tilde \Gamma_k} \prod_{i=1}^k (\E A)_{v_i,v_i+1} + O(n^{k-1}).
    \]
    On the other hand,
    \[
        \binom{n}{k} \E \tau_k(G) = \sum_{(v_1, \dots, v_{k+1}) \in \tilde \Gamma_k} \Pr(\{v_i, v_{i+1}\} \in E(G) \text{ for all $i$}).
    \]
    For each $i$, $\Pr(\{v_i, v_{i+1}\} \in E(G)) = (\E A)_{v_i,v_{i+1}}$. Because the edges are chosen without replacement
    these terms are not independent. However, we always have the inequality
    \[
        \Pr(\{v_i, v_i+1\} \in E(G) \mid \{v_1, v_2\} \in E(G), \dots, \{v_{i-1}, v_i\} \in E(G)
        \le (\E A)_{v_i, v_{i+1}}.
    \]
    Therefore,
    \[
        \binom{n}{k} \E \tau_k(G) \le \sum_{(v_1, \dots, v_{k+1}) \in \tilde \Gamma_k} \prod_{i=1}^k (\E A)_{v_i,v_i+1} = \tr[(\E A)^k] + O(n^{k-1}) = p^k n^k - q^k z^k n^k + O(n^{k-1}).
    \]
    This proves the claim about the expectation.

    Next, we consider the variance of the cycle density.
    For an ordered $k$-tuple $S \subset V(G)$, let $T_S$ be the event that the vertices in $S$ form a $k$-cycle.
    Note that because the edges are drawn
    without replacement, if $S_1$ and $S_2$ do not share an edge then $T_{S_1}$ and $T_{S_2}$ are non-positively
    correlated.
    Therefore,
    \[
        \Var(T(G)) = \sum_{S_1,S_2} \Cov(T_{S_1}, T_{S_2})
        = \sum_{S_1, S_2: |S_1 \cap S_2| \ge 2} \Cov(T_{S_1}, T_{S_2}).
    \]
    There are at most $n^{2k-2}$ elements in the sum, and each is bounded by 1. Therefore,
    $\Var(T(G)) \le n^{2k-2}$ and so $\Var(\tau(G)) = O(n^{-2})$.
\end{proof}

\subsection{The two extreme eigenvalues}

In proving the upper bound on $\Pr(\tau_k(A) \le p^k - t^k)$, we applied the inequality
$\sum_i |a_i|^k \le (\sum_i a_i^2)^{k/2}$ to the collection of most-negative eigenvalues.
In order to understand how these most negative eigenvalues are actually distributed,
observe that in order for the inequality above to be an equality, all but one of the terms
in the sum must be zero. Made quantitative, this observation implies that in order for our
probability upper bound to be tight, the smallest eigenvalue must dominate the others.
In what follows, we write $\|a\|_p^p$ for $\sum_i |a_i|^p$.

\begin{lemma}\label{lem:l3-l2}
    Let $a_1, \dots$ be a sequence of non-negative numbers, in non-increasing order. For
    $\epsilon > 0$ and $k \ge 3$, if
    \be
        \sum_{i \ge 2} a_i^k \ge \epsilon a_1^k
    \ee
    then
    \be
        \|a\|_2^2 \ge (1 + \epsilon)^{1/k} \|a\|_k^2.
    \ee
\end{lemma}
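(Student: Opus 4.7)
The plan is to exploit that the sum $\sum_{i\ge 2} a_i^3$ is controlled by an $\ell^2$-type sum because each $a_i$ with $i\ge 2$ is bounded by $a_1$. Specifically, for $i\ge 2$ we have $a_i \le a_1$, hence $a_i^2 \ge a_i^3/a_1$. Summing gives
\[
    \|a\|_2^2 \;=\; a_1^2 + \sum_{i\ge 2} a_i^2 \;\ge\; a_1^2 + \frac{1}{a_1}\sum_{i\ge 2} a_i^3 \;=\; \frac{a_1^3 + \sum_{i\ge 2} a_i^3}{a_1} \;=\; \frac{\|a\|_3^3}{a_1}.
\]
(If $a_1=0$ the claim is trivial, so we may assume $a_1>0$.) In other words, $\|a\|_2^2/\|a\|_3^2 \ge \|a\|_3/a_1$, so the problem reduces to showing $\|a\|_3 \ge (1+\epsilon)^{1/3}\, a_1$.

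But this is immediate from the hypothesis: by definition
\[
    \|a\|_3^3 \;=\; a_1^3 + \sum_{i\ge 2} a_i^3 \;\ge\; a_1^3 + \epsilon a_1^3 \;=\; (1+\epsilon)\, a_1^3,
\]
and taking cube roots yields $\|a\|_3 \ge (1+\epsilon)^{1/3} a_1$. Combining the two displays gives $\|a\|_2^2 \ge (1+\epsilon)^{1/3}\|a\|_3^2$, as required. There is no real obstacle here; the only conceptual point is the choice to compare $a_i^2$ to $a_i^3/a_1$, which is the correct way to exchange one power of $a$ using that $a_1$ is the maximum.
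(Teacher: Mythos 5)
Your proof is correct and is essentially the same argument as the paper's: your first display establishes $\|a\|_3^3 \le a_1 \|a\|_2^2$, which is the interpolation inequality $\|a\|_3^3 \le \|a\|_\infty \|a\|_2^2$ that the paper invokes directly, and your second display is the paper's observation that $a_1^3 \le \|a\|_3^3/(1+\epsilon)$. The only difference is that you derive the interpolation inequality termwise rather than citing it.
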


\begin{proof}
    If $\sum_{i \ge 2} a_i^k \ge \epsilon a_1^k$ then $\|a\|_\infty^k = a_1^k \le \frac{\|a\|_k^k}{1 + \epsilon}$.
    Then $\|a\|_k^k \le \|a\|_\infty^{k-2} \|a\|_2^2 \le (1 + \epsilon)^{-(k-2)/k} \|a\|_k^{k-2} \|a\|_2^2$,
    and the claim follows.
\end{proof}

Applying Lemma~\ref{lem:l3-l2} to the most negative eigenvalues of $\tilde A$ allows
us to show that the eigenvalues of $\tilde A$ satisfy the claims that
Theorem~\ref{thm:large-p-cycle-count} makes for the eigenvalues of $A$.

\begin{corollary}\label{cor:extreme-eigenvalues-of-tilde-A}
    In the setting of Theorem~\ref{thm:large-p-cycle-count},
    for any $\epsilon > 0$, conditioned on $\tau_k(A) \le p^k - t^k$ we have
    \be
        \lambda_n^k(\tilde A) \le -(1-\epsilon) t^k n^k \text{ and }
        \lambda_{n-1}^k(\tilde A) \ge -\epsilon t^k n^k
    \ee
    with high probability.
\end{corollary}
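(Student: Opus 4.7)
The plan is to apply Lemma~\ref{lem:l3-l2} to the sequence $a_i := -\lambda_{n-i+1}(\tilde A)$, restricted to those indices with $\lambda_{n-i+1}(\tilde A) \le -\sqrt{Kn}$ and sorted in non-increasing order, for a large fixed $K$. Conditional on $\{\tau(A) \le p^3 - t\}$, I will establish two matching norm estimates with high probability: (i) $\|a\|_3^3 \ge tn^3(1-o(1))$, which is exactly the content of Corollary~\ref{cor:small-eigenvalues-dominate}; and (ii) $\|a\|_2^2 \le t^{2/3} n^2 (1+o(1))$. Together these saturate the interpolation inequality $\|a\|_3^3 \le \|a\|_\infty \|a\|_2^2$, forcing $a_1$ to carry essentially all of the $\ell^3$-mass, which is precisely the structural statement we want.

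For (ii), Corollary~\ref{cor:upper-bound-threshold} gives, for every fixed $\delta > 0$,
\[
    \Pr\Bigl(\sum_{\sigma_i(\tilde A) > \sqrt{Kn}} \sigma_i^2(\tilde A) > (1+\delta)^2 t^{2/3} n^2\Bigr)
    \le \exp\bigl(-\tfrac{L}{2}(1+\delta)^2 t^{2/3} n^2 + o(t^{2/3} n^2)\bigr),
\]
where $L = \frac{\ln(p/(1-p))}{2p-1}$. Dividing by the matching lower bound $\Pr(\tau(A)\le p^3 - t) \ge \exp(-\tfrac{L}{2} t^{2/3} n^2 - o(t^{2/3} n^2))$ from the already-proved lower half of Theorem~\ref{thm:large-p-triangle-count} shows the above event has vanishing conditional probability. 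Since each $a_i$ is a singular value of $\tilde A$ exceeding $\sqrt{Kn}$, this gives (ii).

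The first conclusion is now immediate from $a_1 \ge \|a\|_3^3 / \|a\|_2^2$: combining (i) and (ii) yields $a_1 \ge (1-o(1))\,t^{1/3} n$, so $\lambda_n^3(\tilde A) = -a_1^3 \le -(1-o(1))\,tn^3$. For the second conclusion, suppose toward contradiction that $a_2^3 \ge \epsilon\,tn^3$ with non-vanishing conditional probability, for some fixed $\epsilon > 0$. From (ii) we also have $a_1^3 \le \|a\|_2^3 \le (1+o(1))\,tn^3$, hence $\sum_{i\ge 2} a_i^3 \ge a_2^3 \ge (\epsilon - o(1))\,a_1^3$, and Lemma~\ref{lem:l3-l2} forces $\|a\|_2^2/\|a\|_3^2 \ge (1+\epsilon-o(1))^{1/3}$. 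But (i) and (ii) together give $\|a\|_2^2/\|a\|_3^2 \le (1+o(1))/(1-o(1))^{2/3} = 1+o(1)$, a contradiction. Finally, if $\lambda_{n-1}(\tilde A) > -\sqrt{Kn}$ and so is not in the restricted set, then $|\lambda_{n-1}(\tilde A)|^3 \le K^{3/2} n^{3/2} = o(tn^3)$ since $t \gg n^{-3/4}$, which is even stronger than the claim.

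I expect the main technical care to be in combining (i) and (ii) simultaneously under the conditional measure; each holds conditionally with probability $1-o(1)$ only because the deviation constant in Corollary~\ref{cor:upper-bound-threshold} matches the constant $L$ in the lower half of Theorem~\ref{thm:large-p-triangle-count}, so any slack in (ii) would be exponentially costly. The conceptual content is that the entropic cost of producing a triangle deficit $tn^3$ via the spectrum is exactly saturated by concentrating all the ``negative mass'' into a single extreme eigenvalue, and Lemma~\ref{lem:l3-l2}---a sharp form of the $\ell^3 \le \|\cdot\|_\infty^{1/3}\|\cdot\|_2^{2/3}$ interpolation---converts this matching-constants observation into the desired structural claim about $\lambda_n$ and $\lambda_{n-1}$.
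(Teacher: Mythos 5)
Your proposal is correct and follows essentially the same approach as the paper's proof: both combine Corollary~\ref{cor:small-eigenvalues-dominate}, Corollary~\ref{cor:upper-bound-threshold}, and Lemma~\ref{lem:l3-l2} with the lower bound of Theorem~\ref{thm:large-p-triangle-count} to show that the bad spectral configurations have negligible conditional probability. Your presentation is a touch cleaner in that you explicitly derive $a_1^3 \le \|a\|_2^3 \le (1+o(1))tn^3$ before invoking Lemma~\ref{lem:l3-l2}, a step the paper glosses over when checking the lemma's hypothesis $\sum_{i\ge 2} a_i^3 \ge \epsilon' a_1^3$.
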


\begin{proof}
    Let $S = \{i: \lambda_i(\tilde A) \le -\Omega(\sqrt n)\}$.
    By Corollary~\ref{cor:small-eigenvalues-dominate}, for any $\delta > 0$, conditioned on $\tau_k(A) \le p^k - t^k$ we have
    \be
        \sum_{i \in S} \lambda_i^k(\tilde A) \le -(1 - \delta) t^k n^k
    \ee
    with high probability. On this event, we either have $\lambda_n^k(\tilde A) \le -(1-\delta - \epsilon) t^k n^k$
    or $\sum_{i \in S \setminus \{n\}} \lambda_i^k(\tilde A) \le -\epsilon t^k n^k$.
    We will show that for some $\delta = \Omega(\epsilon)$,
    \be
    \Pr \left(\sum_{i \in S} \lambda_i^k(\tilde A) \le -(1 - \delta) t^k n^k
    \text{ and } \lambda_n^k(\tilde A) > -(1 - \delta - \epsilon) t^k n^k
    \text{ and }\sum_{i \in S \setminus \{n\}} \lambda_i^k(\tilde A) \le -\epsilon t^k n^k\right)
    \ee
    is much smaller than $\Pr(\tau_k(A) \le p^k - t^k)$; this will imply the claim.

    Indeed, applying Lemma~\ref{lem:l3-l2} to the sequence of $|\lambda_i|$ for $i \in S$, we see
    that if
    \be
        \sum_{i \in S} \lambda_i^k(\tilde A) \le -(1 - \delta) t^k n^k
        \text{ and } \lambda_n^k(\tilde A) > -(1 - \delta - \epsilon) t^k n^k
        \text{ and }\sum_{i \in S \setminus \{n\}} \lambda_i^k(\tilde A) \le -\epsilon t^k n^k
    \ee
    then
    \be
        \sum_{i \in S} \lambda_i^2(\tilde A) \ge (1 + \epsilon)^{1/k} (1-\delta) t^{2} n^2
        \ge (1 + \Omega(\epsilon)) t^{2} n^2,
    \ee
    where the last inequality follows by choosing a sufficiently small $\delta = \Omega(\epsilon)$.
    But Corollary~\ref{cor:upper-bound-threshold-conditioned} implies that
    \begin{align*}
        \Pr\left(\sum_{i \in S} \lambda_i^2(\tilde A) \ge (1 + \Omega(\epsilon)) t^{2} n^2\right)
        &\le \exp\left(- (1 + \Omega(\epsilon)) (1 - o(1)) \frac{t^{2} n^2 L}{2}\right) \\
        &= o(\Pr(\tau_k(A) \le p^k - t^k)),
    \end{align*}
    where the final bound follows from the lower bound of Theorem~\ref{thm:large-p-cycle-count}.
\end{proof}

Note that although we have been focussing on the smallest (i.e.\ negative, with large magnitude) eigenvalues,
this same argument tells us about the largest eigenvalues also: if $\lambda_1(\tilde A) \ge \epsilon^{1/k} t n$,
then in order to have $\sum_i \lambda_i^k(\tilde A) \le -(1 - o(1)) t^k n^k$ we would need
$\sum_{i = 2}^n \lambda_i^k (\tilde A) \le -(1 - \epsilon - o(1)) t^k n^k$, which by the argument above has
probability $\exp(- (1 + \Omega(\epsilon)) L t^{2} n^2/2) = o(\Pr(\tau_k(A) \le p^k - t^k))$. Therefore,
we obtain the following bound on the largest eigenvalue:

\begin{corollary}\label{cor:conditional-largest-eigenvalue}
    In the setting of Theorem~\ref{thm:large-p-cycle-count},
    conditioned on $\tau_k(A) \le p^k - t^k$, with high probability $\lambda_1(\tilde A) = o(t n)$.
\end{corollary}

To complete the proof of Theorem~\ref{thm:large-p-cycle-count}, we need to
pass from the eigenvalues of $\tilde A$ to the eigenvalues of $A$; recall that
$A = \tilde A + p \1 - p I$. Since $p \1 \ge 0$, we have
\be
    \lambda_{n-1}(A) \ge \lambda_{n-1}(\tilde A) - p,
\ee
and so $\lambda_{n-1}(\tilde A) \ge -o(t n)$ implies the same for $\lambda_{n-1}(A)$.
For $\lambda_n$, let $v$ be a unit eigenvector of $\tilde A$ with eigenvalue $\lambda_n(\tilde A)$.
By Corollary~\ref{cor:degree-profile}, with high (conditional on $\tau_k(A) \le p^k - t^k$) probability,
$|\tilde A 1|^2 = o(t^k n^3)$, where $1$ denotes the all-ones vector. On this event, expand $1$ in the basis of eigenvectors of $\tilde A$
to see that $|\tilde A 1|^2 \ge \inr{1}{v}^2 \lambda_n(\tilde A)^2$. Therefore
$\inr{1}{v}^2 \le o(t^k n^3 \lambda_n(\tilde A)^{-2}) = o(t^{k-2} n) = o(tn)$.
Now, $\inr{A}{v v^T} \le \inr{\tilde A}{v v^T} + p \inr{1}{v}^2 = \lambda_n(\tilde A) + o(t n)$
and by considering $v$ as a potential eigenvector of $A$, it follows that $\lambda_n(A) \le \lambda_n(\tilde A) + o(t n)$.
This completes the proof of Theorem~\ref{thm:large-p-cycle-count}.

\subsection{Theorem \ref{thm:small-p-cycle-count}}

Like Theorem \ref{thm:large-p-cycle-count}, Theorem \ref{thm:small-p-cycle-count} has 
three elements: 
an upper bound on the probability of a moderate deviation, a lower bound, and a bound
on the most negative eigenvalue of the adjacency matrix. 

The upper bound is proved exactly as in
the proof of Theorem \ref{thm:large-p-cycle-count}. The singular values of the eigenvalues are
controlled by the rate function involving $\inf_{s \in \R}\frac{\Lambda^*(s)}{s^2}$, which we have
already established to be $\frac{\ln \frac{1-p}{p}}{2(1-2p)}$. Upper bounds on singular values 
then give upper bounds on eigenvalues. The entire argument is independent of whether $p \ge \frac12$
or $p < \frac12$. 

The proof of the lower bound in Theorem~\ref{thm:small-p-cycle-count} is similar to that
of the lower bound in Theorem~\ref{thm:large-p-cycle-count}, except that we use the vector
$v = (\frac 1{\sqrt n}, \dots, \frac 1{\sqrt n}, -\frac1{\sqrt n},\dots, -\frac1{\sqrt n})$.
For this $v$, Cram\'er's theorem shows that $\ln \Pr(\inr{\tilde A}{vv^T} \le
-t n) \ge -\frac{t^{2} n^2}{2p(1-p)}(1 + o(1))$, and the rest of the
proof proceeds as before.



For the claim about the eigenvalue, we use Lemma~\ref{lem:l3-l2}: fix $\eta > 0$ and $K > 0$
and consider the event $\Omega$ on which
\[
    \sum_{\lambda_i(\tilde A) \le -\sqrt {Kn}} \lambda_i^k(\tilde A) \le -t^kn^k
\]
but $\lambda_n^k(\tilde A) \ge -\frac{1}{1+\eta} t^k n^k$. According
to Lemma~\ref{lem:l3-l2}, on this event we have
\[
    \sum_{\lambda_i(\tilde A) \le -\sqrt {Kn}} \lambda_i^2(\tilde A) \ge (1+\eta)^{1/k} t^{2} n^2.
\]
By Corollary~\ref{cor:upper-bound-threshold-conditioned} (for a sufficiently slowly growing $K = \omega(1)$),
\[
    \ln\Pr(\Omega) \le -\frac{t^{2}n^2(1+\eta)^{1/k} \ln \frac{p}{1-p}}{2(2p-1)} + o(t^{2} n^2),
\]
which, for sufficiently large $\eta$ (depending on $p$) implies that
\[
    \ln\Pr(\Omega) \le -(1 + \Omega(1)) \frac{t^{2}n^2}{2p(1-p)}.
\]
It follows from the lower bound in Theorem~\ref{thm:small-p-cycle-count}
that $\Pr(\Omega \mid \tau_k \le p^k - t^k) \to 0$. Together with Lemma~\ref{lem:bulk-contribution}
-- which shows that eigenvalues larger than $-\sqrt{Kn}$ are unlikely to contribute --
this implies that $\lambda_n^k \le -\frac{1}{1+\eta} t^kn^k$ with high probability given $\tau_k \le p^k - t^k$.
(The main difference here compared to the proof of Theorem~\ref{thm:large-p-cycle-count} is
that because we lack matching upper and lower bounds on the log-probabilities, we cannot take $\eta \approx 0$.)

\section{The conditional structure}
\label{sec:conditional}

In our upper bounds on eigenvalue deviation probabilities, we identified a candidate worst-case eigenvector:
specifically, one that takes a certain non-zero value on $\Theta(t n)$ coordinates and zero elsewhere.
In order to identify the conditional structure of this graph, we need to show that this candidate
eigenvector is essentially the only one: every candidate eigenvector that has a comparable deviation
probability is close to the one we identified.

The first step is to characterize the values that give the worst-case result in our Hoeffding-type bounds.
For the rest of this section, fix $p$ and let $\Lambda(u) = \ln (p e^{u(1-p)} + (1-p) e^{-u p})$
be the cumulant-generating function of a centered, $q$-biased Bernoulli variable.

\begin{lemma}\label{lem:robust-Lambda-maximizer}
    The function $\frac{\Lambda(u)}{u^2}$ has a unique maximizer $u_*$, and there is a constant $c = c(p) > 0$
    such that for every $u \in \R$,
    \[
        \frac{\Lambda(u)}{u^2} \le \frac{\Lambda(u_*)}{(u_*)^2} - c \min\{1, (u - u_*)^2\}.
    \]
\end{lemma}

\begin{proof}
    Let $F(u) = \frac{\Lambda(u)}{u^2}$
    (which is continuously defined and differentiable at zero by taking limits).
    Note that $\Lambda(u)$ is asymptotic to $u(1-p)$ as $u \to \infty$ and asymptotic to $-up$ as $u \to -\infty$.
    In particular, $F(u) \to 0$ as $u \to \pm \infty$, and since $F$ is continuous on $\R$ with $F(0) = 0$
    we see that it achieves a maximum at (possibly more than one) $u \in R$.
    Let $\ell^* = \sup_u \frac{\Lambda(u)}{u^2}$, and suppose that $u_*$ achieves the maximum. Then $\Lambda$
    and $u \mapsto \ell^* u^2$ have the same tangent at $u_*$. Since $\Lambda$ is convex, the function $x \mapsto 2 u_* x - \ell^* (u_*)^2$ touches $\Lambda$ from below at $u_*$, and it follows that $\Lambda^*(2 \ell^* u_*) = \ell^* (u_*)^2$.
    Or in other words, $\Lambda^*(s_*) = \frac{(s_*)^2}{4\ell^*}$ for $s_* = 2 \ell^* u_*$. Now recall from
    Lemma~\ref{lem:order-of-optimization} that $\frac{1}{4\ell^*} = \inf_y \frac{\Lambda^*(y)}{y^2}$.
    It follows that for every $u_*$ at which $\Lambda(u)/u^2$ achieves its maximum, there is a $s_*$
    at which $\Lambda^*(y)/y^2$ achieves its minimum. By Lemma~\ref{lem:bernoulli-case}, $\Lambda^*(y)/y^2$
    has a unique minimizer and it follows that $\Lambda(u)/u^2$ has a unique maximizer.

    To see that $\Lambda(u)/u^2$ is locally quadratic near $u_*$, note that $\Lambda''(u_*) (\Lambda^*)''(s_*) = 1$.
    By Corollary~\ref{cor:bernoulli-case}, $(\Lambda^*)''(s_*) > \frac{1}{2\ell^*}$ and it follows that $\Lambda''(u_*) < 2\ell^*$ and so $F(u)$ is locally quadratic near $u_*$. And since $F > 0$ at its unique maximizer and $F(u) \to 0$ at
    $\pm \infty$, it follows that $F(u) \le F(u_*) - c \min\{1,  (u - u_*)^2\}$ for some $c > 0$ and all $u \in \R$.
\end{proof}

With this extra information on the maximizer of $\Lambda(u)/u^2$, we revisit the Hoeffding-type argument
of Proposition~\ref{prop:hoeffding-conclusion}: in order for a matrix $M$ to get close to the upper
bound of Proposition~\ref{prop:hoeffding-conclusion}, most of the contribution to $\|M\|_F$ must come
from entries that are close to the ``ideal value''.
Recall that $s_* = 2p - 1$ is the unique minimizer of $\frac{\Lambda^*(s)}{s^2}$, where $\Lambda^*$ is the convex conjugate
of $\Lambda$. The matrix $M$ that we constructed in Proposition~\ref{prop:hoeffding-conclusion}
had all of its entries being either zero or $s_*/t$; the next result shows that this is essentially necessary.

\begin{proposition}\label{prop:hoeffding-conclusion-tighter}
    With $\xi$ the centered, $p$-biased Bernoulli variable as above, let $A$ be the symmetric
    random matrix with zero diagonal, and with upper-diagonal elements distributed independently
    according to $\xi$. For any $\|M\|_F \le 1$ and $t > 0$,
    \[
        \Pr(\inr{A}{M} \ge t) \le
        \exp\left(
            - \frac{t^2 L}{2} - \Omega\Big(t^2 \sum_i \min\{a_i^2, (a_i - s_*/t)^2\}\Big)
        \right),
    \]
    where $a_i$ are the upper-diagonal entries of $M$.
\end{proposition}

\begin{proof}
    As in Proposition~\ref{prop:hoeffding-conclusion},
    for any $s \in \R$ we have
    \[
        \Pr(\inr{A}{M} \ge t) \le \exp\left(\sum_i \Lambda(s a_i) - st/2\right).
    \]
    By Lemma~\ref{lem:robust-Lambda-maximizer},
    \[
        \sum_i \Lambda(s a_i) = \sum_i \frac{\Lambda (s a_i)}{(s a_i)^2} (s a_i)^2
        \le s^2 \ell^* \sum_i a_i^2 - c s^2 \sum_i a_i^2 \min\{1, (a_i s - u_*)^2\},
    \]
    where $\ell^* = \sup_u \frac{\Lambda(u)}{u^2}$ and $u_* = \frac{s_*}{2 \ell^*} = 2 s_* L$ is the unique maximizer. (Recalling that $s_*$ is the unique minimizer of $\frac{\Lambda^*(s)}{s}$.)
    Since $\sum_i a_i^2 \le \frac 12$,
    choosing $s = t/(2\ell^*) = 2L t$ gives
    \[
        \Pr(\inr{A}{M} \ge t) \le \exp\left(-\frac{t^2 L }{2} - \tilde c t^2 \sum_i a_i^2 \min\{1, (t a_i - s_*)^2\}\right).
    \]
    To simplify the last term, note that because $s_*$ is fixed, $x^2 \min\{1,
    (tx - s_*)^2\} = \min\{x^2, x^2 (tx - s_*)^2\} \ge \Omega(\min\{x^2, (x/t -
    s_*)^2\})$, because if $(tx - s_*)^2 \le 1$ then $x^2 = \Theta(1/t^2)$ and so
    $x^2 (tx - s_*)^2 = \Theta((x - s_*/t)^2)$.
\end{proof}

\begin{corollary}\label{cor:ideal-values}
    Let $A$ be the adjacency matrix of a $\calG(m, n)$ random graph with $m = p \binom n2$ and $p \ge \frac 12$.
    For $c_*$ defined as in~\eqref{eq:c*-def} and any $n^{-c_*} \ll t^k \ll 1$, conditioned on $\tau_k(A) \le p^k - t^k$ the following holds with high probability:
    $\tilde A := A - \E A$ has a unique (up to sign) unit eigenvector $v$ with eigenvalue $\lambda_n(\tilde A)$,
    and it satisfies
    \[
        \sum_{i,j} \min\{v_i^2 v_j^2, (v_i v_j - s_* / (t n))^2\} = o(1).
    \]
\end{corollary}

\begin{proof}
    Uniqueness of the eigenvector follows from Theorem~\ref{thm:large-p-cycle-count}, which implies
    that the eigenvalue $\lambda_n(\tilde A) = -t n(1 - o(1))$ has multiplicity one. Also, Theorem~\ref{thm:large-p-cycle-count}
    implies that
    \[
        \Pr(\tau_k(A) \le p^k - t^k) = \exp\left(-\frac{t^{2} n^2 L}{2}(1 + o(1))\right),
    \]
    and in order to show the claim it suffices to show that the probability of having a unit eigenvector
    $v$ with eigenvalue $-t n(1 - o(1))$ and
    \begin{equation}\label{eq:v-condition}
        \sum_{i,j} \min\{v_i^2 v_j^2, (v_i v_j - s_* / (t n))^2\} \ge \epsilon > 0
    \end{equation}
    is $o(\Pr(\tau_k(A) \le p^k - t^k))$. For $\epsilon > 0$, let $V_\epsilon$ be the set of unit vectors
    $v$ satisfying~\eqref{eq:v-condition}. First, note that for any fixed $v \in V_\epsilon$,
    Proposition~\ref{prop:hoeffding-conclusion-tighter} implies that
    \[
        \Pr(\inr{\tilde A}{v v^T} \le -t n(1-\delta)) \le
        \exp\left(
            - \frac{t^{2} n^2 L}{2}(1 - O(\delta) + \Omega(\epsilon))
        \right).
    \]
    (Proposition~\ref{prop:hoeffding-conclusion-tighter} was written for matrices with i.i.d.\ entries,
    but we can apply it to $\tilde A$ by the standard trick of writing $\tilde A$ as a matrix with
    i.i.d.\ entries, conditioned on having a certain number of positive entries. The probability of the
    event we're conditioning on is $\Omega(1/n)$, and that extra factor of $n$ can be absorbed in
    the $\exp(-\Omega(\epsilon) t^{2} n)$ term.)

    Now let $\calM_{1,\delta} = \{v v^T: v \in V_\epsilon\}$, and by Lemma~\ref{lem:net-size}
    there is a $\delta$-net $\calN$ for $\calM_{1,\epsilon}$ of size at most $(C/\delta)^{Cn}$
    (because we can start with an $(\delta/2)$-net of $\calM_1$ and then project each element of that net
    onto $\calM_{1,\epsilon}$, which gives an $\delta$-net of $\calM_{1,\epsilon}$).
    By Lemma~\ref{lem:net-approximation} and a union bound, for any fixed $\delta > 0$ we have
    \[
        \Pr\left(\inf_{M \in \calM_{1,\epsilon}} \inr{\tilde A}{M}  \le -t n(1-\delta)(1-2\delta)\right)
        \le 
        \exp\left(
            - \frac{t^{2} n^2 L}{2}(1 - O(\delta) + \Omega(\epsilon))
        \right),
    \]
    because the $(C/\delta)^{Cn}$ term coming from the union bound can be
    absorbed in the $\exp(O(t^{2} n^2 \delta))$ term. If $\delta$ is sufficiently small compared to $\epsilon$,
    the probability bound above is asymptotically smaller than $\Pr(\tau_k(A) \le p^k - t^k)$.
    Therefore, for every $\epsilon > 0$,
    conditioned on $\tau_k(A) \le p^k - t^k$, with high probability the eigenvector of $\tilde A$
    with eigenvalue $\lambda_n(\tilde A)$ does not belong to $V_\epsilon$.
\end{proof}

We interpret Corollary~\ref{cor:ideal-values} as saying that for the most-negative eigenvector of $\tilde A$,
most of the $\ell^2$ ``mass'' of $v_i v_j$ is concentrated near $s_* t^{-1} n^{-1}$. Our next task is to
show that (after possibly changing the sign of $v$) $v_i$ essentially takes two values: $0$ and $s_*^{1/2} t^{-1/2} n^{-1/2}$.
For notational convenience, we will adopt a different normalization: consider a vector $w$ with the property that
\be
\label{eq:w-condition}
\sum_{i,j} \min\{w_i^2 w_j^2, (w_i w_j - 1)^2\} \le \epsilon |w|^4,
\ee
and we will show that (after possibly changing the sign of $w$) $w$ is close to taking values zero and 1.
To break the sign symmetry, we will assume without loss of generality that
\be
\label{eq:w-condition-sign}
\sum_{w_i < 0} w_i^2 < \frac 12 |w|^2.
\ee

\begin{lemma}\label{lem:w-close}
    If~\eqref{eq:w-condition} and~\eqref{eq:w-condition-sign} hold then
    \begin{align}
        \sum_{w_i < 0} w_i^2 &\le 2\epsilon |w|^2 \label{eq:w-ineq-negative}, \\
        \sum_{w_i \ge 1} (w_i - 1)^2 &\le \sqrt \epsilon |w|^2 \label{eq:w-ineq-large}, \\
        \sum_{\frac 12 \le w_i \le 1} (w_i - 1)^2 &\le 3 \sqrt \epsilon |w|^2 \label{eq:w-ineq-medium-large}, \text{ and }\\
        \sum_{0 \le w_i \le \frac 12} w_i^2 &\le \sqrt \epsilon |w|^2. \label{eq:w-ineq-small}
    \end{align}

    In particular, if we define $\tilde w$ by setting $\tilde w_i \in \{0, 1\}$, whichever is closer to $w_i$, then
    $|w - \tilde w|^2 \le 6 \sqrt{\epsilon} |w|^2$.
\end{lemma}

\begin{proof}
    If $w_i < 0$ and $w_j > 0$ then $w_i w_j < 0$ and so $w_i^2 w_j^2  \le (w_i w_j - 1)^2$. Hence,
    \[
        \sum_{w_i < 0} \sum_{w_j > 0} w_i^2 w_j^2 \le \sum_{i,j} \min\{w_i^2 w_j^2, (w_i w_j - 1)^2\} \le \epsilon |w|^4.
    \]
    If we define $\gamma$ by $\sum_{w_i < 0} w_i^2 = \gamma |w|^2$ then $\sum_{w_j < 0} w_j^2 = (1 - \gamma)|w|^2$ and so
    the equation above implies that $\gamma(1-\gamma) \le \epsilon$. By~\eqref{eq:w-condition-sign}, $1 - \gamma \ge \frac 12$
    and so $\gamma \le 2\epsilon$. This proves~\eqref{eq:w-ineq-negative}.

    To prove~\eqref{eq:w-ineq-large}, define $\gamma$ by $\sum_{w_i \ge 1} (w_i - 1)^2 = \gamma |w|^2$. 
    Now, $w_i, w_j \ge 1$ implies that $(w_i - 1)^2 (w_j - 1)^2 \le (w_i w_j - 1)^2 \le w_i^2 w_j^2$
    It follows from~\eqref{eq:w-condition} that
    \[
        \gamma^2 |w|^4
        = \sum_{w_i,w_j \ge 1} (w_i - 1)^2 (w_j - 1)^2
        = \sum_{w_i,w_j \ge 1} (w_i w_j - 1)^2
        \le \sum_{i,j} \min\{w_i^2 w_j^2, (w_i w_j - 1)^2\} \le \epsilon |w|^4,
    \]
    and it follows that $\gamma \le \sqrt \epsilon$.

    To prove~\eqref{eq:w-ineq-medium-large}, define $\gamma$ by
    $\sum_{\frac 12 \le w_i \le 1} (w_i - 1)^2 = \gamma |w|^2$. If $\frac 12 \le w_i, w_j \le 1$ then
    $w_i + w_j \ge 2 w_i w_j$ and so
    $1 - w_i w_j \ge (1 - w_i) (1 - w_j)$. Therefore,
    \[
        \gamma^2 |w|^4 
        = \sum_{\frac 12 \le w_i, w_j \le 1} (w_i - 1)^2 (w_j - 1)^2
        \le \sum_{\frac 12 \le w_i, w_j \le 1} (w_i w_j - 1)^2.
    \]
    Now, if $w_i w_j \ge \frac 14$ then $(w_i w_j - 1)^2 \le 9 \min\{w_i^2 w_j^2, (w_i w_j - 1)^2\}$,
    and so~\eqref{eq:w-condition} implies that
    \[
        \gamma^2 |w|^4  \le 9 \epsilon |w|^4.
    \]

    To prove~\eqref{eq:w-ineq-small},
    define $\gamma$ by $\sum_{0 \le w_i \le \frac 12} w_i^2 = \gamma |w|^2$. 
    If $0 \le w_i,w_j \le \frac 12$ then $w_i^2 w_j^2 = \min\{w_i^2 w_j^2, (w_i w_j - 1)^2\}$, and so
    \[
        \gamma^2 |w|^4
        = \sum_{w_i,w_j \ge 1} w_i^2 w_j^2
        \le \sum_{i,j} \min\{w_i^2 w_j^2, (w_i w_j - 1)^2\} \le \epsilon |w|^4,
    \]
    and it follows that $\gamma \le \sqrt \epsilon$.
\end{proof}

We finally come to the proof of Theorem~\ref{thm:conditional-structure}: let $\tilde A$ be the centered adjacency matrix of a $\calG(n,m)$
random graph, and let $v$ be a unit eigenvector with minimal eigenvalue; recall from Corollary~\ref{cor:smallest-eigenvector}
that with high probability $v$ is unique, and that $w := n^{1/2} t^{1/2} s_*^{-1/2} v$ satisfies the condition~\eqref{eq:w-condition}
for some $\epsilon = o(1)$;
for the rest of the proof, we will be working on this event.
Without loss of generality (changing the sign if necessary) $w$ also satisfies~\eqref{eq:w-condition-sign},
and so Lemma~\ref{lem:w-close} implies that there is a vector $\tilde v$ with $\tilde v_i \in \{0, n^{-1/2} t^{-1/2} s_*^{1/2}\}$ for
all $i$, and $|\tilde v - v| = o(1)$. By Theorem~\ref{thm:large-p-cycle-count} and Corollary~\ref{cor:conditional-largest-eigenvalue},
it follows that on this event,
\be\label{eq:event-of-small-innerp}
    \tilde A = -t n \tilde v \tilde v^T + R, \text{ where $\|R\|_{\text{op}} = o(t n)$}.
\ee
Let $U = \{i: \tilde v_i \ne 0\}$.
Since $|\tilde v_i|^2 = 1 + o(1)$
and $v_i^2 \in \{0, n^{-1} t^{-1} s_*\}$, we must have $|U| = (1 + o(1)) n t s_*^{-1}$. 
Now let $V_1$ and $V_2$ be any sets of vertices. Let $1_U$ denote the vector having $(1_U)_i = 1$ for $i \in U$,
and $(1_U)_i = 0$ otherwise; and similarly for $1_{V_1}$ and $1_{V_2}$. Then $\frac 14 \inr{A}{(1_{V_1} + 1_{V_2})^{\otimes 2} - (1_{V_1} - 1_{V_2})^{\otimes 2}}$
counts the number of edges between ${V_1}$ and ${V_2}$. Recalling that $A = \tilde A + p \1 - p I$,
on the event that~\eqref{eq:event-of-small-innerp} holds,
the number of edges between ${V_1}$ and ${V_2}$ is
\begin{equation}\label{eq:cross-edges}
    p |{V_1}| |{V_2}| - (1 + o(1)) \frac 14 (t n \inr{\tilde v}{1_{V_1} + 1_{V_2}}^2 - \inr{\tilde v}{1_{V_1} - 1_{V_2}}^2) + o(t n |1_{V_1} + 1_{V_2}|^2).
\end{equation}
Recall that $\tilde v = n^{-1/2} t^{-1/2} s_*^{1/2} 1_U$. Therefore, if ${V_1}, {V_2} \subset U$ then
$\inr{\tilde v}{1_{V_1}} = n^{-1/2} t^{-1/2} s_*^{1/2} |V_1|$ and similarly for ${V_2}$. Hence,
the number of edges between ${V_1}$ and ${V_2}$ is
\[
    p |{V_1}| |{V_2}| - (1 + o(1)) \frac {s_*}4 ((|{V_1}| + |{V_2}|)^2 - (|{V_1}| - |{V_2}|)^2) + o(|{V_1} \cup {V_2}|^2) =  (1-p) |{V_1}| |{V_2}| + o(|U|^2).
\]
When ${V_1} \subset U^c$, we have $\inr{\tilde v}{1_{V_1}} = 0$ and so~\eqref{eq:cross-edges}
implies that there are $p |{V_1}| |{V_2}| + o(t n (|{V_1}| + |{V_2}|))$ edges between ${V_1}$ and ${V_2}$.
This completes the proof of Theorem~\ref{thm:conditional-structure}
in the case that either $V_1, V_2 \subset U$ or $V_1 \subset U^c$. To obtain the general case, we simply split $V_i$
into $V_i \cap U$ and $V_i \cap U^c$.


\begin{thebibliography}{1234}

\bibitem{GoldschmidtGriffithsScott20} 
{C. Goldschmidt, S. Griffiths and A. Scott,
Moderate deviations of subgraph counts in the Erd\H{o}s-R\'{e}nyi random graphs $G(n,m)$ and $G(n,p)$,
Trans. Amer. Math. Soc. {\bf 373} (2020) 5517--5585.}


\bibitem{Janson90}
{S. Janson,
Poisson approximation for large deviations,
Random Structures \& Algorithms {\bf 1} (1990) 221--229.}

\bibitem{BoucheronLugosiMassart03}
{S. Boucheron, G.  Lugosi, P. Massart etal,
Concentration inequalities using the entropy method,
Ann. Prob. {\bf 31} (2003) 1583--1614.}

\bibitem{Chatterjee12}
{S. Chatterjee,
The missing log in large deviations for triangle counts,
Random Structures \& Algorithms {\bf 40} (2012) 437--451.}

\bibitem{JansonRucinski02}
{S. Janson and A. Ruci\'nski,
The Infamous Upper Tail,
Random Structures \& Algorithms {\bf 20} (2002) 317-342.}

\bibitem{RadinSadun13}
{C. Radin and L. Sadun, 
Phase transitions in a complex network,
J. Phys. A {\bf 46} (2013) 12 pp.}

\bibitem{RadinSadun15}
{C. Radin and L. Sadun, 
Singularities in the entropy of asymptotically large simple graphs,
J. Stat. Phys. {\bf 158} (2015) 853--865.}

\bibitem{ChatterjeeDembo20}
{S. Chatterjee and A. Dembo,
Nonlinear large deviations,
Adv. Math. {\bf 299} (2016) 396--450.}

\bibitem{CookDembo20}
{N. Cook and A. Dembo,
Large deviations of subgraph counts for sparse Erd\H{o}s-R\'{e}nyi  graphs, 
Adv. Math, {\bf 373} (2020) 107289.}

\bibitem{LubetzkyZhao17}
{E. Lubetzky and Y. Zhao,
On the variational problem for upper tails in sparse random graphs,
Random Structures Algorithms {\bf 50} (2017) 420--436.}

\bibitem{Bhattacharya_etal17}
{B. Bhattacharya, S. Ganguly, E. Lubetzky and Y. Zhao,
Upper tails and independence polynomials in random graphs,
Adv. Math. {\bf 319} (2017) 313--347.}

\bibitem{Zhao17}
{Y. Zhao, 
On the lower tail variational problem for random graphs,
Combin. Probab. Comput. {\bf 26} (2017) 301--320.}

\bibitem{BhattacharyaDembo19}
{S. Bhattacharya and A. Dembo,
Upper Tail For Homomorphism Counts in Constrained Sparse Random Graphs,
arXiv:1909.03045}

\bibitem{Gunby20}
{B. Gunby,
Upper Tails of Subgraph Counts in Sparse Regular Graphs,
arXiv:2010.00658.}

\bibitem{LubetzkyZhao15}
{E. Lubetzky and Y. Zhao,
On replica symmetry of large deviations in random graphs,
Random Structures Algorithms {\bf 47} (2015) 109--146.}

\bibitem{JansonWarnke16}
{S. Janson and L. Warnke,
The lower tail: Poisson approximation revisited,
Random Structures and Algorithms {\bf 48} (2016) 219--246.}

\bibitem{Chatterjee16}
{S. Chatterjee,
An introduction to large deviations for random graphs,
Bull. Amer. Math. Soc. {\bf 53} 617--642.}

\bibitem{Janson94}
{Svante Janson, 
{\em Orthogonal decompositions and functional limit theorems for random graph statistics},
American Mathematical Soc. 1994.}

\bibitem{HarelMoussetsamotij19}
{M. Harel, F. Mousset and W. Samotij,
Upper tails via high moments and entropic stability,
arXiv:1904.08212.} 

\bibitem{NeemanRadinSadun20}
{J. Neeman, C. Radin and L. Sadun, 
Phase transitions in finite random networks, 
J. Stat. Phys. {\bf 181} (2020) 30--328.}

\bibitem{RRS14}
{C. Radin, K. Ren and L. Sadun, 
The asymptotics of large constrained graphs,
 J. Phys. A: Math. Theor. {\bf 47} (2014) 175001.}


\bibitem{KRRS1} {R.\  Kenyon, C.\  Radin, K.\  Ren and L.\  Sadun, Multipodal
  structure and phase transitions in large constrained graphs,
  { J. Stat. Phys.} {\bf 168} (2017) 233--258.}


\bibitem{KRRS2} {R.\  Kenyon, C.\  Radin, K.\  Ren and L.\  Sadun, Bipodal
  structure in oversaturated random graphs, {
  Int. Math. Res. Notices}, {\bf 2018} (2016) 1009--1044.}


\bibitem{RRS2} {C. Radin, K. Ren and L. Sadun, 
A symmetry breaking transition in the edge/triangle
network model, {Ann. Inst. H. Poincar\'e D} {\bf 5} (2018) 251--286.}


\bibitem{KRRS3} {R.\ Kenyon, C.\ Radin, K.\ Ren and L.\ Sadun, The phases of large networks
with edge and triangle constraints, {J.\ Phys.\ A: Math.\ Theor.}\
{\bf 50} (2017)
435001.}

\bibitem{Ko} {H.\ Koch,
Vertex order in some large constrained random graphs,
{SIAM J. Math. Anal.} {\bf 48} (2016) 2588--2601.}



\bibitem{ChatterjeeVaradhan11} 
{S.\  Chatterjee and S.R.S.\  Varadhan, The large deviation
  principle for the Erd\H{o}s-R\'{e}nyi random graph, {
    Eur.\ J.\ Combin.}\ {\bf 32} (2011) 1000--1017.}

\bibitem{DemboLubetzky18}
{A. Dembo and E. Lubetzky,
A large deviation principle for the Erd\H{o}s-R\'enyi uniform graph,
Electron. Commun. Probab. {\bf 23} (2018) 1--13.}

\bibitem{GuionnetHusson20}
{A. Guionnet and J. Husson,
Large deviations for the largest eigenvalue of Rademacher matrices,
Ann. Prob. {\bf 48} (2000) 1436--1465.}

\bibitem{Augeri16}
{F. Augeri,
Large deviations principle for the largest eigenvalue of Wigner matrices without Gaussian tail,
Electron. J. Probab. {\bf 21} (2016) 49 pp.}

\bibitem{BhattacharyaGanguly20}
{B. Bhattacharya and S. Ganguly,
Upper Tails for Edge Eigenvalues of Random Graphs,
SIAM Journal on Discrete Mathematics {\bf 34} (2020) 1069--1083.}

\bibitem{AugeriGuionnetHusson20}
{F. Augeri, A. Guionnet and J. Husson,
Large deviations for the largest eigenvalue of sub-Gaussian matrices,
arXiv:1911.10591.}


\bibitem{GuionnetZeitouni00}
{A. Guionnet and O. Zeitouni,
Concentration of the Spectral Measure for Large Matrices,
Electron. Commun. Probab. {\bf 5} (2020) 119--136.}


\bibitem{Tao12}
{T. Tao,
    {\em Topics in random matrix theory},
    {Americal Mathematical Soc. 2012.}
}


\bibitem{CandesPlan}
{E. J. Candes and Y. Plan,
    {\em Tight oracle inequalities for low-rank matrix recovery from a minimal number of noisy random measurements},
IEEE Transactions on Information Theory {\bf 57.4} (2011) 2342--2359.}

\bibitem{Grigorieff}
{
R. D. Grigorieff,
{\em A note on von Neumann's trace inequality},
Mathematische Nachrichten {\bf 151.1} (1991) 327--328.
}

\bibitem{Kallenberg01}
{O. Kallenberg,
{\em Foundations of Modern Probability, Second Edition},
Springer 2001.
}

\bibitem{DemboZeitouni}
{A. Dembo and O. Zeitouni,
    {\em Large deviations techniques and applications},
Springer 2010.}
















\end{thebibliography}

\end{document}